\newtheorem{theorem}{Theorem}[section]
\newtheorem{lemma}[theorem]{Lemma}
\newtheorem{proposition}[theorem]{Proposition}
\newtheorem{corollary}[theorem]{Corollary}
\newtheorem*{MTheorem}{Main Theorem}
\theoremstyle{definition}
\newtheorem{definition}[theorem]{Definition}
\newtheorem{example}[theorem]{Example}
\theoremstyle{remark}
\numberwithin{equation}{section}
\newcommand\C{\mathbb C}
\newcommand\D{\mathbb D}
\newcommand\R{\mathbb R}
\newcommand\T{\mathbb T}
\newcommand{\quotes}[1]{``#1''}
\newcommand\rtkmu{R^t(K, \mu)}
\newcommand\rikmu{R^\infty(K,\mu)}
\newcommand\limu{L^\infty(\mu)}
\renewcommand\i{\infty}
\newcommand\area{{\frak m}}
\newcommand\CT{{\mathcal C}}
\begin{document}

\setcounter{page}{1}

\title[Mean Rational Approximation]{Mean Rational Approximation for Some Compact Planar Subsets}

\author[J. Conway \MakeLowercase{and} L. Yang]{John B. Conway$^1$ \MakeLowercase{and} Liming Yang$^2$}

\address{$^{1}$Department of Mathematics, The George Washington University, Washington, DC 20052}
\email{\textcolor[rgb]{0.00,0.00,0.84}{conway@gwu.edu}}

\address{$^2$Department of Mathematics, Virginia Polytechnic and State University, Blacksburg, VA 24061.}
\email{\textcolor[rgb]{0.00,0.00,0.84}{yliming@vt.edu}}

\subjclass[2010]{Primary 47A15; Secondary 30C85, 31A15, 46E15, 47B38}

\keywords{Analytic Capacity, Analytic Bounded Point Evaluations, Bounded Point Evaluations, and Mean Rational Approximation}


\begin{abstract} In 1991, J. Thomson \cite{thomson} obtained celebrated structural results for $P^t(\mu).$ Later, J. Brennan \cite{b08} generalized Thomson's theorem to $\rtkmu$ when the diameters of the components of $\mathbb C\setminus K$ are bounded below. The results indicate  that if $R^t(K, \mu)$ is pure, then $\rtkmu \cap L^\infty (\mu)$ is the \quotes{same as} the algebra of bounded analytic functions on $\mbox{abpe}(R^t(K, \mu)),$ the set of analytic bounded point evaluations. We show that if the diameters of the components  of $\mathbb C\setminus K$ are allowed to tend to zero, then even though $\text{int}(K) = \mbox{abpe}(R^t(K, \mu))$ and $K =\overline {\text{int}(K)},$  the algebra $\rtkmu \cap L^\infty (\mu)$ may \quotes{be equal to} a proper sub-algebra of bounded analytic functions on $\text{int}(K),$ where functions in the sub-algebra are \quotes{continuous} on certain portions of the inner boundary of $K.$
\end{abstract} \maketitle

\section{\textbf{Introduction}}

For a Borel subset $E$ of the complex plane $\C,$ let $M_0(E)$ denote the set of finite complex-valued Borel measures that are compactly supported in $E$ and let $M_0^+(E)$ be the set of positive measures in $M_0(E).$ The support of $\nu\in M_0(\C),$ $\text{spt}(\nu),$ is the smallest closed set that has full $|\nu|$ measure. 
For a compact subset $K\subset \C,$ $\mu\in M_0^+(K),$ and $1\leq t \le \infty$, the analytic polynomials and functions in 
\[
\ \mbox{Rat}(K) := \{q:\mbox{$q$ is a rational function 
with poles off $K$}\}
\]
are members of $L^t(\mu)$. If $1\leq t < \infty,$ we let $P^t(\mu)$ denote the closure of the (analytic) 
polynomials in $L^t(\mu)$ and let $R^t(K, \mu)$ denote the closure of $\mbox{Rat}(K)$ in $L^t(\mu)$. Define
\[
\ \rtkmu ^\perp = \left \{g \in L^s(\mu):~ \int f(z)g(z)d\mu = 0, \text{ for } f \in \rtkmu \right \},
\]
where $\frac 1t + \frac 1s = 1.$
Let $\rikmu$ be the weak-star closure of $\mbox{Rat}(K)$ in $\limu.$ Denote 
 \[
 \ R^{t,\i}(K, \mu) = R^t(K, \mu) \cap L^\infty(\mu).
 \] 
We let
   $\partial_e K$ (the exterior boundary of $K$) denote the union of
   the boundaries of all the   
   components of $\C \setminus K.$ The inner boundary of $K$ is denoted $\partial_i K := \partial K \setminus \partial_e K.$
  
Let $\mathbb{D}$ denote the unit
disk $\{z: |z| < 1\}$, let $\T$ denote the unit circle $\{z: |z| = 1\}$, and let $\R$ denote the real line $\{z: \text{Im}(z) = 0\}.$ For $z_0\in \C$ and $r> 0,$ let $\D(z_0, r) := \{z:~|z - z_0| < r\}.$
Let $l_B$ be the arc-length measure on a rectifiable curve $\Gamma$ restricted to a subset $B \subset \Gamma.$ We use $C_1,C_2,...$ and $c_1,c_2,...$ for absolute constants that may change from one step to the next.  

A point $z_0$ in $\mathbb{C}$ is called a \textit{bounded point evaluation} for $P^t(\mu)$ (resp., $R^t(K, \mu)$)
if $f\mapsto f(z_0)$ defines a bounded linear functional, denoted $\phi_{z_0},$ for the analytic polynomials (resp., functions in $\mbox{Rat}(K)$)
with respect to the $L^t(\mu)$ norm. The collection of all such points is denoted $\mbox{bpe}(P^t(\mu))$ 
(resp., $\mbox{bpe}(R^t(K, \mu)$)).  If $z_0$ is in the interior of $\mbox{bpe}(P^t(\mu))$ (resp., $\mbox{bpe}(R^t(K, \mu)$)) 
and there exist positive constants $r$ and $M$ such that $|f(z)| \leq M\|f\|_{L^t(\mu)}$, whenever $z\in \D(z_0, r)$ 
and $f$ is an analytic polynomial (resp., $f\in \mbox{Rat}(K)$), then we say that $z_0$ is an 
\textit{analytic bounded point evaluation} for $P^t(\mu)$ (resp., $R^t(K, \mu)$). The collection of all such 
points is denoted $\mbox{abpe}(P^t(\mu))$ (resp., $\mbox{abpe}(R^t(K, \mu))$). Actually, it follows from Thomson's Theorem
\cite{thomson} (or see Theorem \ref{thmthom}, below) that $\mbox{abpe}(P^t(\mu))$ is the interior of $\mbox{bpe}(P^t(\mu))$. 
This also holds in the context of $R^t(K, \mu)$ as was shown by J. Conway and N. Elias in \cite{ce93}. Now, 
$\mbox{abpe}(P^t(\mu))$ is the largest open subset of $\mathbb{C}$ to which every function $f\in P^t(\mu)$ has an analytic 
continuation 
\[
\ \rho(f)(z) := \phi_z (f), ~ z\in \mbox{abpe}(P^t(\mu)),
\]
that is, 
\begin{eqnarray}\label{EFOnABPE}
\ f(z) = \rho(f)(z) ~ \mu|_{\mbox{abpe}(P^t(\mu))}-a.a.,
\end{eqnarray}
and similarly in the context of $R^t(K, \mu)$. The map $\rho$ is called the evaluation map.

Let $S_\mu$ denote the multiplication by $z$ on $P^t(\mu)$ (resp., $R^t(K, \mu)$).
The operator $S_\mu$ is pure if $P^t(\mu)$ (resp., $R^t(K, \mu)$) does not have a direct $L^t$ summand,
  and is irreducible if $P^t(\mu)$ (resp., $R^t(K, \mu)$) contains no non-trivial characteristic functions.
  For an open subset $U \subset \C,$ we let $H^\infty(U)$ denote the bounded analytic functions on $U$.
This is a Banach algebra in the maximum modulus norm, but it is important to endow it also with a weak-star topology.

We shall use $\overline {E}$ to denote the closure of the set $E\subset \C$ and $\text{int}(E)$ to denote the interior of $E.$ Our story begins with celebrated results of J. Thomson, in \cite{thomson}. 

\begin{theorem} (Thomson 1991)
\label{thmthom}
Let $\mu\in M_0^+(\C)$ and suppose that $1\leq t < \infty$.
Then there is a Borel partition $\{\Delta_i\}_{i=0}^\infty$ of $\mbox{spt}(\mu)$ such that 
\[
 \ P^t(\mu ) = L^t(\mu |_{\Delta_0})\oplus \bigoplus _{i = 1}^\infty P^t(\mu |_{\Delta_i})
 \]
and the following statements are true:

(a) If $i \ge 1$, then $S_{\mu |_{\Delta_i}}$ on $P^t(\mu |_{\Delta_i})$ is irreducible.

(b) If $i \ge 1$ and $U_i :=abpe( P^t(\mu |_{\Delta_i}))$, then $U_i$ is a simply connected region and $\Delta_i\subset \overline{U_i}$.

(c) If $S_\mu$ is pure (that is, if $\Delta_0 = \emptyset$) and $U = \text{abpe}(P^t(\mu))$, then the evaluation map $\rho$ is an isometric isomorphism and a weak-star homeomorphism from $P^t(\mu) \cap L^\infty(\mu)$ onto $H^\infty(U)$. 
\end{theorem}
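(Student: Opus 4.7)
The plan is to follow Thomson's original strategy, which rests on one deep technical theorem and a sequence of structural deductions. The central lemma I would prove first is that if $\mu \in M_0^+(\C)$ satisfies $1 \in P^t(\mu)$ and $P^t(\mu)$ admits no bounded point evaluations, then $P^t(\mu) = L^t(\mu)$. The proof proceeds by contradiction: assuming some nonzero $g \in L^s(\mu)$ annihilates the polynomials, the Cauchy transform $\hat{g\mu}(z) = \int (w-z)^{-1} g(w) \, d\mu(w)$ vanishes off $\text{spt}(\mu)$ in the sense of distributions, and I would exploit analytic capacity estimates together with the Vitushkin localization operator applied on a fine dyadic grid to locate a point $z_0$ where polynomials satisfy a bounded point evaluation, contradicting the hypothesis. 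This construction requires Thomson's \emph{coloring scheme} and a self-improving estimate for analytic capacity; it is by far the main obstacle of the entire proof.

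With this lemma in hand, the decomposition in the statement becomes essentially bookkeeping. The characteristic functions contained in $P^t(\mu)$ form a countable Boolean algebra of commuting projections (countability follows from separability of $L^t$), and I would let $\Delta_0$ be the union of those atoms $E$ for which $P^t(\mu|_E) = L^t(\mu|_E)$, then enumerate the remaining atoms as $\{\Delta_i\}_{i\ge 1}$. By construction, each $P^t(\mu|_{\Delta_i})$ with $i \ge 1$ is irreducible and strictly smaller than $L^t(\mu|_{\Delta_i})$, so the main lemma applied to $\mu|_{\Delta_i}$ forces $U_i = \mbox{abpe}(P^t(\mu|_{\Delta_i}))$ to be nonempty, yielding (a).

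For (b), I would show that $U_i$ is simply connected by arguing that a bounded component $V$ of $\C \setminus U_i$ cannot exist. Such a $V$, combined with Runge's theorem applied to rational functions with poles in $V$, would allow the construction of a nontrivial spectral projection in $P^t(\mu|_{\Delta_i})$ (coming from a function peaking on $V$), contradicting irreducibility. The inclusion $\Delta_i \subset \overline{U_i}$ then follows by the same circle of ideas applied to any hypothetical piece of $\Delta_i$ lying off $\overline{U_i}$, which would yield a further decomposition.

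For (c), I would verify in turn that the evaluation map $\rho \colon P^t(\mu) \cap L^\infty(\mu) \to H^\infty(U)$ is well-defined and contractive (immediate from the ABPE bound together with the maximum principle on $U$), is an isometry (one direction follows from \eqref{EFOnABPE} together with $\text{spt}(\mu) \subset \overline{U}$, and the other from the maximum modulus principle), is surjective (given $h \in H^\infty(U)$, simple connectedness of $U$ and a Mergelyan-type approximation on an exhausting sequence of subcompacta produces polynomials $p_n \to h$ both weak-star in $H^\infty(U)$ and boundedly in $L^t(\mu)$, with limit in $P^t(\mu) \cap L^\infty(\mu)$), and finally is a weak-star homeomorphism (a routine consequence of an isometric bijection between duals of separable Banach spaces whose preduals pair in a compatible fashion). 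The surjectivity step, which relies on Davie-type polynomial approximation in the presence of a nontrivial $L^t$ norm, is the only nontrivial ingredient at this last stage.
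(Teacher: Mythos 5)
The paper states this theorem as a cited result of Thomson (1991), referring to \cite{thomson}, and supplies no proof of it, so there is no internal argument to compare your proposal against.

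As an overview of Thomson's original strategy, your sketch is largely on target: the dichotomy that a $P^t(\mu)$-space admitting no bounded point evaluations must equal $L^t(\mu)$, proved via the coloring scheme, Vitushkin localization, and analytic-capacity estimates, is indeed the technical heart, and the remaining assertions are structural deductions from it. Some of those deductions are more delicate than you indicate, however. The Boolean algebra of characteristic functions in $P^t(\mu)$ is not automatically atomic; that the reducing subspaces decompose into countably many irreducible pieces plus a single $L^t$ summand is itself a nontrivial output of the argument, not a consequence of separability of $L^t(\mu)$ alone, and needs to be established. Your route to simple connectedness of $U_i$ via a spectral projection built from Runge's theorem does not obviously land a new characteristic function in $P^t(\mu|_{\Delta_i})$; the standard route is a maximum-modulus estimate showing that any bounded complementary component of $U_i$ would consist entirely of analytic bounded point evaluations, contradicting the maximality of $U_i$. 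Finally, the surjectivity of $\rho$ in (c) requires more than a nod to Mergelyan: one transports $H^\infty(U)$ to $H^\infty(\D)$ by the Riemann map, approximates boundedly on compacta, and, for $t=1$, invokes Dunford--Pettis (uniform integrability) rather than reflexivity to extract the weak $L^t$ limit, then verifies that this limit lies in $P^t(\mu)\cap L^\infty(\mu)$ and has the prescribed evaluation on $U$. None of these gaps is unfixable, but they are exactly the places where a sketch at this level of detail is incomplete.
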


The next result is due to Brennan \cite{b08}, which extends Theorem \ref{thmthom}.

\begin{theorem} (Brennan 2008)
\label{thmBCE}
 Let $\mu\in M_0^+(K)$ for a compact set $K$.
 Suppose that $1\le t < \infty$ and the diameters of the components of $\mathbb C\setminus K$ are bounded below. 
Then there is a Borel partition $\{\Delta_i\}_{i=0}^\infty$ of $\mbox{spt}(\mu)$ and compact subsets $\{K_i\}_{i=1}^\infty$ such that $\Delta_i \subset K_i$ for $i \ge 1$,
 \[
 \ R^t(K, \mu ) = L^t(\mu |_{\Delta_0})\oplus \bigoplus _{i = 1}^\infty R^t(K_i, \mu |_{\Delta_i}),
 \]
and the following statements are true:

(a) If $i \ge 1$, then $S_{\mu |_{\Delta_i}}$ on $R^t(K_i, \mu |_{\Delta_i})$ is irreducible.

(b) If  $i \ge 1$ and $U_i :=\mbox{abpe}( R^t(K_i, \mu |_{\Delta_i}))$, then $K_i = \overline{U_i}$.

(c) If  $i \ge 1$, then the evaluation map $\rho_i$ is an isometric isomorphism and a weak-star homeomorphism from $R^{t,\i}(K_i, \mu |_{\Delta_i})$ onto $H^\infty(U_i)$.
\end{theorem}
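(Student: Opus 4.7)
The plan is to follow the outline of Thomson's proof of Theorem \ref{thmthom}, with rational functions replacing polynomials and with the diameter lower bound on the components of $\C\setminus K$ serving as the structural hypothesis that replaces the single-component-at-infinity structure of the polynomial case.

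First I would produce the decomposition abstractly. Peel off $L^t(\mu|_{\Delta_0})$ as the maximal direct $L^t$-summand of $R^t(K,\mu)$, so that on $\Delta_0^c$ the operator $S_\mu$ is pure. Then, using a maximal mutually orthogonal family of characteristic functions belonging to $R^t(K,\mu)$, further decompose the pure part into irreducible summands $R^t(K_i,\mu|_{\Delta_i})$; define $K_i$ to be the spectrum of the restricted multiplication operator $S_{\mu|_{\Delta_i}}$, which lies in $K$ by the rational functional calculus. Part (a) is immediate from the construction. The easy inclusion $\overline{U_i}\subset K_i$ in (b) holds because each $z\in U_i$ sits in the spectrum of $S_{\mu|_{\Delta_i}}$.

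The heart of the argument is the reverse inclusion $K_i\subset\overline{U_i}$. I would argue by contradiction: if $z_0\in K_i\setminus\overline{U_i}$, the goal is to construct a nontrivial characteristic function in $R^t(K_i,\mu|_{\Delta_i})$, contradicting irreducibility. This is exactly where the diameter hypothesis is crucial: each bounded component of $\C\setminus K$ contains a disk of radius comparable to $\delta$ and hence has analytic capacity bounded below by a universal constant; by monotonicity of analytic capacity, the uniform lower bound transfers to the holes of $\C\setminus K_i$. Combining this with Vitushkin-type localization and the Melnikov--Tolsa machinery of curvature and capacity, one can build Thomson-style \quotes{heavy barriers}: rational functions, with poles in a controlled set, whose Cauchy transforms detect the would-be bad disk $\D(z_0,r)$ and, after passage to a weak limit in $R^t(K_i,\mu|_{\Delta_i})^{\perp}$, yield a nontrivial idempotent via the Plemelj jump formula. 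This is the rational analogue of Thomson's polynomial barrier construction and establishes (b).

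For (c), the evaluation map $\rho_i$ is a contractive homomorphism into $H^\infty(U_i)$ by (\ref{EFOnABPE}), and injective by irreducibility. To upgrade to an isometric isomorphism onto $H^\infty(U_i)$, the nontrivial step is surjectivity: given $g\in H^\infty(U_i)$, approximate $g$ in $L^t(\mu|_{\Delta_i})$ by rational functions with poles off $K_i$ via Vitushkin's constructive scheme, whose estimates again require the uniform lower bound on capacity in each hole. The weak-star homeomorphism then follows from the isometry together with a routine bounded-pointwise-convergence argument on $H^\infty(U_i)$. I expect the main obstacle to be step (b), the rational-approximation analogue of Thomson's heavy-barrier construction; once (b) is in hand, part (c) follows by techniques that are by now standard in the Tolsa era of mean rational approximation.
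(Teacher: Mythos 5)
The paper does not prove Theorem~\ref{thmBCE}: it is quoted as background and attributed to Brennan~\cite{b08}, and the whole point of the present paper is to show how part (c) degenerates once the diameter hypothesis is dropped. There is therefore no in-paper argument against which to compare your sketch.

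Taken on its own, your outline is a plausible road map but leaves essentially all of the real content unaddressed. Defining $K_i$ as the spectrum of $S_{\mu|_{\Delta_i}}$ does not by itself give that the corresponding pure summand is of the form $R^t(K_i,\mu|_{\Delta_i})$; showing that the abstract direct summand is realized as a genuine $R^t$-space on a compact set is a substantive step. The heavy-barrier construction in (b) is asserted rather than carried out, and the passage from a capacitary estimate on Cauchy transforms of annihilating measures to a nontrivial idempotent is precisely where the Thomson/Brennan machinery (and, in the present paper, the $\gamma$-continuity and Plemelj-type arguments of Sections 2 and 4) does its work; saying ``yield a nontrivial idempotent via the Plemelj jump formula'' names the tool without building the bridge. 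Finally, calling (c) a ``routine'' Vitushkin exercise once (b) is in hand is exactly backwards given the context: surjectivity of the evaluation map onto all of $H^\infty(U_i)$, with $L^\infty$ control and weak-star continuity, is the most delicate part of the theorem, and the paper's Main Theorem (2) and its Sections 5--6 are devoted to showing how much work this requires and how the conclusion changes when the hypothesis is weakened. To turn your proposal into a proof you would need to supply those arguments, which is in effect to reproduce Brennan's paper rather than to cite it.
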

 
In this paper, we show that
    when the diameters of the components  of $\mathbb C\setminus K$ are allowed to tend to $0$, then
    Theorem \ref{thmBCE} (c) is no long valid, even though $\text{int}(K) = \text{abpe}(\rtkmu)$ and $K =\overline {\text{int}(K)}.$ In fact, the evaluation map is an isometric isomorphism and a weak-star homeomorphism from $R^{t,\i}(K, \mu)$ onto a proper sub-algebra of $H^\infty(\text{int}(K)),$ where functions in the sub-algebra have equal non-tangential limits on certain portions of $\partial_i K.$

\begin{definition}\label{StringOfBeadsDef}
Let $\{\lambda_n\}_{n = 1}^\infty$ 
be a subset of $\mathbb R$
such that $\D(\lambda _n, r_n)$ is a subset of $\mathbb D$ and $\overline{\D(\lambda _i, r_i)} \cap \overline{\D(\lambda _j, r_j)} = \emptyset$ for $i \ne j$. Define
 \[
 \ K \ :=\  \overline{\mathbb D} \setminus \left (\bigcup_{n = 1}^\infty \D(\lambda _n, r_n) \right ).
 \]
 Assume $ K \cap \mathbb R$ has no interior 
 as a subset of $\mathbb R$,
 and that the length of $K \cap  \mathbb R$ is greater than zero (i.e.
  $\sum _{n = 1}^\infty r_n < 1$).
  A set of this form is called a string of beads set. 
Set $\Omega = int(K)$. Define
$\Omega_U = \{z\in \Omega:~\text{Im}(z) > 0\}, ~ \Omega_L = \Omega\setminus \Omega_U$, $K_U = \overline{\Omega_U}$, and $K_L = \overline{\Omega_L}$.
\end{definition}

Clearly, $\partial_i K = K \cap \mathbb R, ~l_{\R}-a.a..$ For $\lambda\in \mathbb R$ and $0 < \alpha \le 1$, define the upper cone 
\begin{eqnarray}\label{ULConeDefEq1}
\ U_n(\lambda,\alpha) = \left \{z:~ |Re(z - \lambda )| < \alpha (Im(z - \lambda ))\right \}\cap \D \left (\lambda, \frac{1}{n} \right )
\end{eqnarray}
and the lower cone 
\begin{eqnarray}\label{ULConeDefEq2}
\ L_n(\lambda,\alpha) = \left \{z:~ |Re(z - \lambda )| < - \alpha (Im(z - \lambda ))\right \} \cap \D \left (\lambda ,\frac{1}{n} \right ).
\end{eqnarray}

It is easy to verify that for $\lambda \in \partial_i K, ~l_{\partial_i K} -a.a.,$ there exists $n_\lambda$ such that the non-tangential limit region
\begin{eqnarray}\label{ULConeDefEq3}
\ U_{n_\lambda}(\lambda,\alpha)\cup L_{n_\lambda}(\lambda,\alpha) \subset \Omega.
\end{eqnarray}

Fatou's theorem  says that bounded analytic functions in the unit disk have non-tangential limits almost everywhere. Precomposing with a Riemann map from $\D$ onto $\Omega_U$ we get that, for $f\in H^\infty (\Omega),$ the limits
\begin{eqnarray}\label{UNTLimitEq}
\ f_+(\lambda) := \lim_{z\in U_n(\lambda,\alpha)\rightarrow \lambda} f(z)
\end{eqnarray}
exist $l_{\partial_i K}-a.a.$ since the Riemann map is conformal from $\overline \D$ onto $\overline {\Omega_U}$ (see \cite[pp 208-218]{gm05} and \cite[pp 80-82]{pom92}), and likewise the limits
\begin{eqnarray}\label{LNTLimitEq}
\ f_-(\lambda) := \lim_{z\in L_n(\lambda,\alpha)\rightarrow \lambda} f(z)
\end{eqnarray}
exist $l_{\partial_i K}-a.a.$

For a subset $E_0\subset \partial_i K,$  $H^\i (\Omega, E_0)$ is the sub-algebra of $f\in H^\i (\Omega)$ for which $f_+(z) = f_-(z), ~ l_{E_0}-a.a..$ The set $E_0$ is called the removable boundary for $H^\i (\Omega, E_0).$
 
 The aim of this paper is to prove our main theorem below. By \eqref{EFOnABPE}, we will also use $f|_{\text{abpe}(R^t(K,\mu))}$ for $\rho(f).$

\begin{MTheorem}\label{SOBTheorem}
Let $1 \le t < \infty$ and let $K$ be a string of beads set defined as in Definition \ref{StringOfBeadsDef}. Suppose that $\mu\in M_0^+(K),$ $S_\mu$ on $R^t(K,\mu)$ is pure, and $\text{abpe}(R^t(K,\mu)) = \Omega .$ Then there exists a Borel subset $E_0\subset \partial_i K$ such that the following statements are true:

(1) If $f\in R^t(K, \mu)$, then the non-tangential limit from the top on $E_0$,
  $f_+(z)$,  and the non-tangential limit from 
  the bottom on $E_0$, $f_-(z)$, exist $l_{E_0}-a.a.$, and 
  \[
  \ f_+(z) = f_-(z),~ l_{E_0}-a.a..
  \] 
  Consequently, if $f\in R^{t,\i}(K, \mu),$ then $\rho(f) \in H^\i (\Omega, E_0).$ 

(2) If $F \in H^\i (\Omega, E_0),$ then there exists $f\in R^{t,\i}(K, \mu)$ such that $F = \rho(f).$

(3) The evaluation map $\rho$ is an isometric isomorphism and a weak-star homeomorphism from $R^{t,\i}(K, \mu)$ onto $H^\i (\Omega, E_0).$ 
\end{MTheorem}

Our methods can be applied to a compact subset $K\subset \C$ and $\mu\in M_0^+(K)$ if $S_\mu$ on $R^t(K,\mu)$ is pure, $\text{abpe}(R^t(K,\mu)) = \text{int}(K),$ $\partial_i K$ is a subset of a Lipschitz graph, and $\text{int}(K)$ contains non-tangential limit regions (similar to \eqref{ULConeDefEq3}) whose vertices are on $\partial_i K$ almost everywhere with respect to the arc-length measure restricted to $\partial_i K.$

In section 2, we review some recent results of analytic capacity and Cauchy transform that are needed in our analysis. In the context of $\rikmu,$ Proposition \ref{REEForSOB} is a simple version of Main Theorem (1) where $E_0$ is always equal to $\partial_i K.$ However, for $\rtkmu$ when $1\le t < \infty,$ $E_0$ can be an arbitrary subset of $\partial_i K$ as shown in Proposition \ref{SOBRemovable}. section 4 constructs the removable boundary $E_0$ and proves Main Theorem (1). Main Theorem (2) and (3) are proved in section 5 and section 6.

\section{\textbf{Prelimaries}}

Let $\nu \in M_0(\C).$
For $\epsilon > 0,$ $\mathcal C_\epsilon(\nu)$ is defined by
\ \begin{eqnarray}
\ \mathcal C_\epsilon(\nu)(z) = \int _{|w-z| > \epsilon}\dfrac{1}{w - z} d\nu (w).
\ \end{eqnarray} 
The (principal value) Cauchy transform
of $\nu$ is defined by
\ \begin{eqnarray}\label{CTDefinition}
\ \mathcal C(\nu)(z) = \lim_{\epsilon \rightarrow 0} \mathcal C_\epsilon(\nu)(z)
\ \end{eqnarray}
for all $z\in\mathbb{C}$ for which the limit exists.
If $\lambda \in \mathbb{C}$ and 
\begin{eqnarray}\label{CTAIDefinition}
\ \tilde \nu (\lambda) := \int \frac{d|\nu |}{|z - \lambda |} < \infty,	
\end{eqnarray}
 then 
$\lim_{r\rightarrow 0}\frac{|\nu |(\D (\lambda, r))}{r} = 0$ and 
$\lim_{\epsilon \rightarrow 0} \mathcal C_{\epsilon}(\nu)(\lambda )$ exists. Therefore, a standard application of Fubini's
Theorem shows that $\mathcal C(\nu) \in L^s_{\mbox{loc}}(\mathbb{C})$, for $ 0 < s < 2$. In particular, it is
defined for almost all $z$ with respect to the area measure $\area$ on $\mathbb{\mathbb{C}}$, and clearly $\mathcal{C}(\nu)$ is analytic 
in $\mathbb{C}_\infty \setminus\mbox{spt}(\nu)$, where $\mathbb{C}_\infty := \mathbb{C} \cup \{\infty \}.$ In fact, from 
Corollary \ref{ZeroAC} below, we see that \eqref{CTDefinition} is defined for all $z$ except for a set of zero analytic 
capacity. Throughout this paper, the Cauchy transform of a measure always means the principal value of the transform.

The maximal Cauchy transform is defined by
 \[
 \ \mathcal C_*(\nu)(z) = \sup _{\epsilon > 0}| \mathcal C_\epsilon(\nu)(z) |.
 \]

If $B \subset\C$ is a compact subset, then  we
define the analytic capacity of $B$ by
\[
\ \gamma(B) = \sup |f'(\infty)|,
\]
where the supremum is taken over all those functions $f$ that are analytic in $\mathbb C_{\infty} \setminus B$ such that
$|f(z)| \le 1$ for all $z \in \mathbb{C}_\infty \setminus B$; and
$f'(\infty) := \lim _{z \rightarrow \infty} z[f(z) - f(\infty)].$
The analytic capacity of a general subset $E$ of $\mathbb{C}$ is given by: 
\[
\ \gamma (E) = \sup \{\gamma (B) : B\subset E \text{ compact}\}.
\]
Good sources for basic information about analytic
capacity are Chapter VIII of \cite{gamelin}, Chapter V of \cite{conway}, and \cite{Tol14}.

A related capacity, $\gamma _+,$ is defined for subsets $E$ of $\mathbb{C}$ by:
\[
\ \gamma_+(E) = \sup \|\mu \|,
\]
where the supremum is taken over $\mu\in M_0^+(E)$ for which $\|\mathcal{C}(\mu) \|_{L^\infty (\mathbb{C})} \le 1.$ 
Since $\mathcal C\mu$ is analytic in $\mathbb{C}_\infty \setminus \mbox{spt}(\mu)$ and $(\mathcal{C}(\mu)'(\infty) = \|\mu \|$, 
we have:
\[
\ \gamma _+(E) \le \gamma (E)
\]
for all subsets $E$ of $\mathbb{C}$.  X. Tolsa has established the following astounding results. See \cite{Tol03} (also Theorem 6.1 and Corollary 6.3 in \cite{Tol14}) for (1) and (2). See \cite[Proposition 2.1]{Tol02} (also  \cite[Proposition 4.16]{Tol14}) for (3).

\begin{theorem}\label{TolsaTheorem}
(Tolsa 2003)
(1) $\gamma_+$ and $\gamma$ are actually equivalent. 
That is, there is an absolute constant $A_T$ such that 
\begin{eqnarray}\label{GammaEq}
\ \gamma (E) \le A_ T \gamma_+(E)
\end{eqnarray}
for all $E \subset \mathbb{C}.$ 

(2) Semiadditivity of analytic capacity:
\begin{eqnarray}\label{Semiadditive}
\ \gamma \left (\bigcup_{i = 1}^m E_i \right ) \le A_T \sum_{i=1}^m \gamma(E_i)
\end{eqnarray}
where $E_1,E_2,...,E_m \subset \mathbb{C}$ ($m$ may be $\i$).

(3) There is an absolute positive constant $C_T$ such that, for any $a > 0$, we have:  
\begin{eqnarray}\label{WeakOneOne}
\ \gamma(\{\mathcal{C}_*(\nu)  \geq a\}) \le \dfrac{C_T}{a} \|\nu \|.
\end{eqnarray}

\end{theorem}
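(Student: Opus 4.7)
The plan is to proceed via the curvature-of-measures approach developed by Melnikov and Verdera, the only framework known to reach these conclusions. The cornerstone is Melnikov's symmetrization identity
\begin{equation*}
\|\mathcal{C}_\epsilon \mu\|_{L^2(\mu)}^2 \;=\; \tfrac{1}{6}\, c_\epsilon^2(\mu) \;+\; O(\|\mu\|),
\end{equation*}
where $c^2(\mu) = \iiint c(x,y,z)^2\,d\mu(x)\,d\mu(y)\,d\mu(z)$ and $c(x,y,z)$ is the Menger curvature (the reciprocal of the circumradius of the triangle on $x,y,z$). This identity, which falls out of algebraic manipulation of the three-variable Cauchy kernel, converts the analytic problem of bounding $\mathcal{C}\mu$ into a geometric problem about $\mu$. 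Using it, I would first establish the characterization that $\gamma_+(E)$ is comparable to $\sup\{\|\mu\|\}$ taken over $\mu\in M_0^+(E)$ with linear growth $\mu(\D(z,r))\leq r$ and controlled total curvature $c^2(\mu)\leq \|\mu\|$; one direction is Melnikov plus a non-homogeneous Cotlar inequality, the other uses suppressed kernels to pass from an $L^\infty$ bound on $\mathcal{C}\mu$ to linear growth plus curvature control.

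For the weak $(1,1)$ estimate (3), I would execute a non-homogeneous Calderón--Zygmund decomposition of $\nu$ at height $a$, using Tolsa's dyadic lattice adapted to the underlying (possibly non-doubling) reference measure. The good part $g$ is controlled by the $L^2$-boundedness derived from Melnikov's identity on a suitable auxiliary measure, while each bad atom $b_j$ is replaced by a Dirac mass at a carefully chosen point of the associated cube, with the error estimated through the analytic capacity of the discarded set via semiadditivity on a single cube. The main obstacle here is the absence of doubling: the usual Whitney covering must be replaced by a stopping-time construction controlling the ratio $|\nu|(\D(z,r))/r$, and the corresponding BMO-type space (the RBMO of Tolsa) must be introduced to handle endpoint estimates.

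Part (1) is the deepest assertion and occupies the core of the argument. Given compact $E$ with Ahlfors function $f$, write $f = \mathcal{C}\nu$ for a complex measure $\nu$ on $E$ with $f'(\infty) = \|\nu\| \geq \gamma(E)$ and $\|f\|_\infty \leq 1$. Setting $b = d\nu/d|\nu|$, one must verify non-homogeneous accretivity conditions on $b$ and apply a non-homogeneous $Tb$-theorem to obtain $L^2(|\nu|)$-boundedness of the Cauchy transform. From this boundedness, together with the weak-type estimate (3), one extracts a positive subset $F\subset E$ of $|\nu|$-mass $\gtrsim \gamma(E)$ on which $\mathcal{C}_*(\chi_F|\nu|)$ stays bounded, certifying $\gamma_+(E) \gtrsim \gamma(E)$. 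I expect the principal difficulty to lie in verifying the paraproduct estimates in the $Tb$-theorem without any a priori doubling hypothesis, and in controlling the stopping-time family adapted simultaneously to $|\nu|$ and to the failure of the accretivity of $b$.

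Finally, (2) will follow cleanly once (1) and (3) are in hand. Given $E_1,\dots,E_m$, choose $\mu_i\in M_0^+(E_i)$ with $\|\mu_i\|\geq \tfrac{1}{2}\gamma_+(E_i)$ and $\|\mathcal{C}\mu_i\|_\infty\leq 1$, and set $\mu = \sum_i \mu_i$. The weak-type bound (3) applied to $\mu$ shows that $\gamma(\{\mathcal{C}_*\mu > M\}) \leq C_T\|\mu\|/M$, so for $M$ large enough we can excise a set of analytic capacity at most $\tfrac{1}{4A_T}\sum \gamma_+(E_i)$. Restricting $\mu$ to the complement produces a competitor for $\gamma_+(\bigcup_i E_i)$ of mass comparable to $\sum_i \gamma_+(E_i)$, and one concludes by combining with (1) to replace $\gamma_+$ by $\gamma$ throughout.
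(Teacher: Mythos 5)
The paper does not prove this theorem at all: it is quoted from Tolsa's work, with (1) and (2) attributed to \cite{Tol03} (Theorem 6.1 and Corollary 6.3 of \cite{Tol14}) and (3) to \cite[Proposition 2.1]{Tol02} (\cite[Proposition 4.16]{Tol14}). So there is no internal argument to compare yours against; what you have written is an outline of Tolsa's own proof, and the only question is whether that outline is sound as a proof. It is not: it is a correct map of the machinery (Melnikov's curvature identity, the characterization of $\gamma_+$ by linear growth together with $c^2(\mu)\le C\|\mu\|$, a non-homogeneous $Tb$ theorem with suppressed kernels), but at least two of its load-bearing steps fail as stated.

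The first is the starting point of part (1). For a general compact $E$ the Ahlfors function $f$ cannot be written as $\mathcal C(\nu)$ with $\nu$ a finite measure on $E$: $\bar\partial f$ is only a distribution supported on $E$, and the representation $f=\mathcal C(b\,l_\Gamma|_E)$ is available only when $E$ lies on a rectifiable curve and has finite length (this is exactly the hypothesis of the paper's lemma quoted from \cite[Proposition 6.5]{Tol14}). Even when such a $\nu$ exists, $f'(\infty)=\nu(E)$, not $\|\nu\|$, so the inequality $\|\nu\|\ge\gamma(E)$ together with accretivity of $b=d\nu/d|\nu|$ is precisely what has to be produced, not what can be assumed; Tolsa's actual argument builds the measure fed into the $Tb$ theorem from a dual extremal problem and runs an induction on scales over finite unions of squares, which is where the real difficulty sits. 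The second gap is in your deduction of (2): for $\mu=\sum_i\mu_i$ neither the bound $\|\mathcal C(\mu)\|_\infty\le1$ nor linear growth survives, and restricting $\mu$ to the complement of $\{\mathcal C_*\mu>M\}$ gives no control on $\mathcal C_*(\mu|_F)$ because the Cauchy kernel is not positive; semiadditivity of $\gamma_+$ must instead be routed through the curvature characterization, where $c^2\bigl(\sum_i\mu_i\bigr)$ can be estimated. In short, the architecture is the right one, but these two steps are not repairable by routine means --- they are where the depth of Tolsa's theorem lives --- and for the purposes of this paper the statement should simply remain a citation.
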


Combining Theorem \ref{TolsaTheorem} (1), \cite[Theorem 4.14]{Tol14}, and \cite{Tol03} (or \cite[Theorem 8.1]{Tol14}), we get the following corollary. The reader may also see \cite[Corollary 3.1]{acy19}.

\begin{corollary}\label{ZeroAC}
If $\nu\in M_0(\C),$ then there exists 
$\mathcal Q \subset \mathbb{C}$ with $\gamma(\mathcal Q) = 0$ such that $\lim_{\epsilon \rightarrow 0}\mathcal{C} _{\epsilon}(\nu)(z)$ 
exists for $z\in\mathbb{C}\setminus \mathcal Q$.
\end{corollary}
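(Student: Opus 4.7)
The plan is to isolate the exceptional set and split it into two parts, each of which can be handled by one of the results already quoted from Tolsa. Set
\[
 \mathcal{Q} := \{z \in \C : \lim_{\epsilon \to 0^+} \mathcal{C}_\epsilon(\nu)(z) \text{ does not exist}\},
\]
and decompose $\mathcal{Q} = \mathcal{Q}_1 \cup \mathcal{Q}_2$ with
\[
 \mathcal{Q}_1 := \{z \in \C : \mathcal{C}_*(\nu)(z) = \infty\}, \qquad \mathcal{Q}_2 := \mathcal{Q} \setminus \mathcal{Q}_1.
\]
By the semiadditivity \eqref{Semiadditive}, it suffices to show $\gamma(\mathcal{Q}_1) = \gamma(\mathcal{Q}_2) = 0$ separately.

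For $\mathcal{Q}_1$ I would invoke the weak-type inequality \eqref{WeakOneOne}. Since $\mathcal{Q}_1 \subset \{\mathcal{C}_*(\nu) \geq k\}$ for every positive integer $k$, monotonicity of $\gamma$ gives $\gamma(\mathcal{Q}_1) \leq C_T\|\nu\|/k$, and letting $k\to\infty$ yields $\gamma(\mathcal{Q}_1) = 0$. This is the short, easy half.

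The main obstacle is $\mathcal{Q}_2$, where the truncated transforms stay bounded but fail to converge. I would write $\mathcal{Q}_2 = \bigcup_{k\geq 1} A_k$, where $A_k$ consists of those $z$ at which the oscillation of $\{\mathcal{C}_\epsilon(\nu)(z)\}_{\epsilon > 0}$ exceeds $1/k$ while $\mathcal{C}_*(\nu)(z) < \infty$. By \eqref{Semiadditive} again it is enough to prove $\gamma(A_k) = 0$ for each fixed $k$. Assume for contradiction that $\gamma(A_k) > 0$; then the equivalence $\gamma \sim \gamma_+$ of Theorem \ref{TolsaTheorem}(1) produces a nonzero $\sigma \in M_0^+(A_k)$ with $\|\mathcal{C}\sigma\|_{L^\infty(\C)} \leq 1$. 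Now the plan is to apply \cite[Theorem 4.14]{Tol14} to the measure $\nu$ against the auxiliary measure $\sigma$: since $\mathcal{C}_*(\nu)$ is finite $\sigma$-a.e.\ on $A_k$, that result forces $\lim_{\epsilon\to 0}\mathcal{C}_\epsilon(\nu)(z)$ to exist for $\sigma$-a.e.\ $z \in A_k$, which flatly contradicts the definition of $A_k$ since $\sigma(A_k) = \|\sigma\| > 0$.

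The delicate step — and the reason the corollary is nontrivial — is checking that the hypotheses of \cite[Theorem 4.14]{Tol14} are met by the auxiliary measure $\sigma$ obtained from $\gamma_+$. That part depends on the boundedness of $\mathcal{C}\sigma$ together with Tolsa's curvature estimates underlying the solution of Painlev\'e's problem (the content packaged into \cite[Theorem 8.1]{Tol14}). Once that is in hand, combining the two steps gives
\[
 \gamma(\mathcal{Q}) \leq A_T\bigl(\gamma(\mathcal{Q}_1) + \gamma(\mathcal{Q}_2)\bigr) = 0,
\]
so $\mathcal{Q}$ is the required $\gamma$-null set on whose complement the principal value in \eqref{CTDefinition} exists.
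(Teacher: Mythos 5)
Your argument is, at bottom, the same route as the paper's one-line citation: obtain from $\gamma\approx\gamma_+$ (Theorem~\ref{TolsaTheorem}(1)) an auxiliary measure $\sigma$ carried by the exceptional set with $\|\mathcal{C}\sigma\|_\infty\le 1$, use Tolsa's characterization of $\gamma_+$ to conclude that $\sigma$ has linear growth and $L^2(\sigma)$-bounded (equivalently, finite-curvature) Cauchy transform, and then invoke Tolsa's principal-value theorem to derive a contradiction. Two points of correction. First, the two citations to Tolsa's book are interchanged: \cite[Theorem 4.14]{Tol14} is the characterization of $\gamma_+$ in terms of linear growth, curvature, and $L^2$-boundedness, i.e.\ it is what supplies the ``delicate'' hypotheses on $\sigma$; the result that then gives existence of $\lim_{\epsilon\to0}\mathcal{C}_\epsilon(\nu)(z)$ for $\sigma$-a.e.\ $z$ is \cite[Theorem 8.1]{Tol14} (or \cite{tol98}), not Theorem~4.14. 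Second, the decomposition $\mathcal{Q}=\mathcal{Q}_1\cup\mathcal{Q}_2$ and the further split $\mathcal{Q}_2=\bigcup_k A_k$ are superfluous and appear to stem from a misreading of the hypotheses: \cite[Theorem 8.1]{Tol14} asserts $\sigma$-a.e.\ existence of the principal value for an arbitrary finite measure $\nu$ with no finiteness assumption on $\mathcal{C}_*(\nu)$, so the same contradiction argument applied directly to all of $\mathcal{Q}$ is already complete, and the weak-type inequality \eqref{WeakOneOne} together with the oscillation sets $A_k$ can be dispensed with. Neither point indicates a missing idea, but as written the key step cites a theorem that does not say what is claimed of it, so the labels need to be swapped.
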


\begin{proposition}\label{GLProp} 
Let $X\subset \mathbb C$ and $a_1,a_2 \in \mathbb C.$ 
Let $\mathcal Q$ be any set in $\C$ with  $\gamma(\mathcal Q) = 0$.
Let $f_1(z)$ and $f_2(z)$ be functions on $\D(\lambda, \delta_0)\setminus \mathcal Q$ for some $\delta_0 > 0.$
 If 
\[  
 \  \lim_{\delta \rightarrow 0} \dfrac{\gamma(\D(\lambda, \delta) \cap X \cap \{|f_i(z) - a_i| > \epsilon\})} {\delta}= 0, ~ i =1,2,
\]
for all $\epsilon > 0,$ then the following properties hold:

(1) 
\[  
 \  \lim_{\delta \rightarrow 0} \dfrac{\gamma(\D(\lambda, \delta) \cap X \cap \{|f_1(z) + f_2(z) - (a_1 + a_2)| > \epsilon\})} {\delta}= 0;
\]

(2) 
\[  
 \  \lim_{\delta \rightarrow 0} \dfrac{\gamma(\D(\lambda, \delta) \cap X \cap \{|f_1(z)f_2(z) - a_1a_2| > \epsilon\})} {\delta}= 0;
\]
(3) If $a_2 \ne 0,$ then
\[  
 \  \lim_{\delta \rightarrow 0} \dfrac{\gamma(\D(\lambda, \delta) \cap X \cap \{|\frac{f_1(z)}{f_2(z)} - \frac{a_1}{a_2})| > \epsilon\})} {\delta}= 0.
\]
 \end{proposition}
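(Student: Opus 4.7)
The plan is to treat this as a transcription of the standard arithmetic of limits, with semiadditivity of analytic capacity (Theorem \ref{TolsaTheorem} (2)) replacing the role that subadditivity plays for ordinary measures. In each of the three parts, the key step will be to express the level set $\{|F - A| > \epsilon\}$ as a \emph{finite} union of level sets of $f_i - a_i$ (plus, where necessary, an auxiliary ``bad'' set whose capacity density is already controlled by the hypothesis), then intersect with $\D(\lambda,\delta)\cap X$, invoke \eqref{Semiadditive}, divide by $\delta$, and let $\delta \to 0$.

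For (1), I would simply use the triangle inequality to obtain
\[
\{|f_1 + f_2 - (a_1 + a_2)| > \epsilon\} \subset \{|f_1 - a_1| > \epsilon/2\} \cup \{|f_2 - a_2| > \epsilon/2\},
\]
and the hypothesis supplies the required vanishing of $\gamma(\cdot)/\delta$ on each of the two pieces on the right.

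For (2), I would decompose $f_1 f_2 - a_1 a_2 = f_1(f_2 - a_2) + a_2(f_1 - a_1)$. The $a_2(f_1 - a_1)$ term is immediate. The only subtlety is the potential unboundedness of $f_1$ in $f_1(f_2 - a_2)$; my plan is to split off the bad set $\{|f_1 - a_1| > 1\}$, which itself has vanishing capacity density by hypothesis. On its complement $|f_1| \le |a_1| + 1$, so
\[
\{|f_1(f_2 - a_2)| > \epsilon/2\} \subset \{|f_1 - a_1| > 1\} \cup \{|f_2 - a_2| > \epsilon/(2(|a_1| + 1))\},
\]
and semiadditivity again closes the argument. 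For (3), the assumption $a_2 \neq 0$ lets me push $f_2$ away from zero: on $\{|f_2 - a_2| \le |a_2|/2\}$ one has $|f_2| \ge |a_2|/2$, so
\[
\left|\frac{f_1}{f_2} - \frac{a_1}{a_2}\right| \le \frac{|a_2(f_1 - a_1) - a_1(f_2 - a_2)|}{|a_2|^2/2} \le C\bigl(|f_1 - a_1| + |f_2 - a_2|\bigr)
\]
with $C$ depending only on $|a_1|$ and $|a_2|$. This yields the three-term inclusion
\[
\left\{\left|\tfrac{f_1}{f_2} - \tfrac{a_1}{a_2}\right| > \epsilon\right\} \subset \{|f_2 - a_2| > |a_2|/2\} \cup \{|f_1 - a_1| > \epsilon/(2C)\} \cup \{|f_2 - a_2| > \epsilon/(2C)\},
\]
and one more application of \eqref{Semiadditive} completes the proof.

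The only place where I anticipate any care is bookkeeping: since analytic capacity is only semiadditive with constant $A_T$, not genuinely subadditive, I must keep each union finite and ensure that all splitting constants depend only on $a_1$, $a_2$, and $\epsilon$, never on $\delta$. Once the inclusions above are set up with $\delta$-independent thresholds, passing each piece through Theorem \ref{TolsaTheorem} (2) and dividing by $\delta$ reduces the entire statement to three one-line applications of the hypothesis.
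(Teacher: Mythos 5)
Your proof is correct and follows essentially the same route as the paper: in each part you express the target level set as a finite union of level sets already controlled by the hypothesis and close with semiadditivity (Theorem \ref{TolsaTheorem}(2)). One small but genuine improvement worth noting: in part (2) the paper writes $f_1f_2 - a_1a_2 = f_2(f_1-a_1) + a_1(f_2-a_2)$ and truncates $|f_2|$ by splitting off $\{|f_2-a_2| > \tfrac{1}{2}|a_2|\}$, which degenerates to the whole space when $a_2=0$; your choice of splitting off $\{|f_1-a_1|>1\}$ (a set whose capacity density vanishes by hypothesis for any $a_1$) gives a bound $|f_1|\le |a_1|+1$ unconditionally and so covers the case $a_2=0$ without a separate argument. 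In part (3) you argue directly, while the paper first reduces to $f_1=a_1=1$ via part (2) and then treats $1/f_2$; both are fine, and your direct three-term inclusion is equally short.
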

 
 \begin{proof}
 We have the following set inclusions.
 \[
 \begin{aligned}
 \ & \{|f_1(z) + f_2(z) - (a_1 + a_2)| > \epsilon\} \\
 \ \subset & \{|f_1(z) - a_1| > \frac{\epsilon}{2}\} \cup \{|f_2(z) - a_2| > \frac{\epsilon}{2}\}
 \end{aligned}
 \]
 and
 \[
 \begin{aligned}
 \ & \{|f_1(z)f_2(z) - a_1a_2| > \epsilon\} \\
 \ \subset & \{|f_2(z)||f_1(z) - a_1| > \frac{\epsilon}{2}\} \cup \{|a_1||f_2(z) - a_2| > \frac{\epsilon}{2}\}\\
 \ \subset & \{\frac 32 |a_2| |f_1(z) - a_1| > \frac{\epsilon}{2}\} \cup \{|f_2(z) - a_2| > \frac 12 |a_2|\} \cup \{|a_1||f_2(z) - a_2| > \frac{\epsilon}{2}\}.
 \end{aligned}
 \]
 (1) and (2) follows from Theorem \ref{TolsaTheorem} (2).	
 
 For (3): From (2), we may assume that $f_1(z) = 1$ and $a_1 = 1.$ Because
 \[
 \begin{aligned}
 \ &  \left \{\left |\frac{1}{f_2(z)} - \frac{1}{a_2})\right | > \epsilon\right \} \\
 \ \subset & \left \{\frac{2}{|a_2|^2}\left |f_2(z) - a_2\right | > \epsilon\right \} \cup \left \{|f_2(z) - a_2| > \frac {|a_2|}{2}\right \}.
 \end{aligned}
 \]
(3) follows from (2) and Theorem \ref{TolsaTheorem} (2).
 \end{proof}

\begin{definition}\label{GLDef}
Let  $\mathcal Q$ be a set with $\gamma(\mathcal Q) = 0$.
Let $f(z)$ be a function defined 
on $\D(\lambda, \delta_0)\setminus \mathcal Q$ for some $\delta_0 > 0.$ The function $f$ has a $\gamma$-limit $a$ at $\lambda$ if
\[  
 \  \lim_{\delta \rightarrow 0} \dfrac{\gamma(\D(\lambda, \delta) \cap \{|f(z) - a| > \epsilon\})} {\delta}= 0
\]
for all $\epsilon > 0$. If in addition, $f(\lambda)$ is well defined and $a = f(\lambda)$, then $f$ is $\gamma$-continuous at $\lambda$. 
\end{definition}

The following corollary follows from Proposition \ref{GLProp}. 

\begin{corollary}\label{GCProp}
If $f(z)$ and $g(z)$ are $\gamma$-continuous at $\lambda,$ then $f(z)+g(z)$ and $f(z)g(z)$ are $\gamma$-continuous at $\lambda.$ If in addition $g(\lambda) \ne 0,$ then $\frac{f(z)}{g(z)}$ is $\gamma$-continuous at $\lambda.$ 
 \end{corollary}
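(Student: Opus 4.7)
The plan is to observe that Corollary \ref{GCProp} is essentially a direct translation of Proposition \ref{GLProp} into the language of Definition \ref{GLDef}. By Definition \ref{GLDef}, saying \emph{$f$ is $\gamma$-continuous at $\lambda$} is nothing but saying that $f$ has $\gamma$-limit $f(\lambda)$ at $\lambda$; likewise $g$ has $\gamma$-limit $g(\lambda)$ at $\lambda$. So I would apply Proposition \ref{GLProp} with $X=\mathbb C$, $f_1=f$, $f_2=g$, $a_1=f(\lambda)$, and $a_2=g(\lambda)$. Part (1) of the proposition then gives that $f+g$ has $\gamma$-limit $f(\lambda)+g(\lambda)=(f+g)(\lambda)$ at $\lambda$, which is precisely $\gamma$-continuity of $f+g$ at $\lambda$. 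Part (2) gives the analogous conclusion $f(\lambda)g(\lambda)=(fg)(\lambda)$ for the product. For the quotient, the hypothesis $g(\lambda)\ne 0$ supplies the condition $a_2\ne 0$ required by Proposition \ref{GLProp}(3), whose conclusion identifies the $\gamma$-limit of $f/g$ at $\lambda$ as $f(\lambda)/g(\lambda)=(f/g)(\lambda)$.

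There is essentially no obstacle; the only mild bookkeeping is to verify that $f/g$ is defined on a punctured disk $\D(\lambda,\delta_0')\setminus \mathcal Q'$ for some $\delta_0'>0$ and some set $\mathcal Q'$ with $\gamma(\mathcal Q')=0$, so that Definition \ref{GLDef} applies to $f/g$ in the first place. But this is automatic from $\gamma$-continuity of $g$ with $g(\lambda)\ne 0$: applying Definition \ref{GLDef} to $g$ with $\epsilon=|g(\lambda)|/2$ shows that the zero set of $g$ (and the set where $|g|<|g(\lambda)|/2$) can be absorbed into an enlarged zero-analytic-capacity exceptional set on a sufficiently small disk about $\lambda$, so that $f/g$ is well defined off this set. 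With this minor domain check in place, each of the three assertions is a one-line invocation of the corresponding part of Proposition \ref{GLProp}.
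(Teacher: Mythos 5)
Your proposal is correct and is exactly the route the paper takes: the paper simply states that the corollary "follows from Proposition \ref{GLProp}," and your instantiation with $X=\mathbb{C}$, $a_1=f(\lambda)$, $a_2=g(\lambda)$ is precisely the intended application. The domain remark for $f/g$ is a sensible bit of bookkeeping that the paper leaves implicit.
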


 The following lemma is from \cite[Lemma 3.2]{acy19}. 

\begin{lemma}\label{CauchyTLemma} 
Let $\nu\in M_0(\mathbb{C})$ and assume that for some $\lambda _0$ in $\mathbb C$ we have:
\begin{itemize}
\item[(a)] $\underset{\delta\rightarrow 0}{\overline \lim} \dfrac{| \nu |(\D (\lambda _0, \delta))}{\delta }= 0$ and 
\item[(b)] $\mathcal{C} (\nu)(\lambda _0) = \lim_{\epsilon \rightarrow 0}\mathcal{C} _{\epsilon}(\nu)(\lambda _0)$ exists.
\end{itemize}

Then:

(1) $\mathcal{C}(\nu)(\lambda) = \lim_{\epsilon \rightarrow 0}\mathcal{C} _{\epsilon}(\nu)(\lambda )$ 
exists for every $\lambda$ in $\C$  except perhaps for some set $\mathcal Q$  with analytic capacity $0$.

(2) The  Cauchy transform $\mathcal{C}(\nu)(\lambda)$ is $\gamma$-continuous at $\lambda_0$.
\end{lemma}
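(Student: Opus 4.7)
The plan is to observe that assertion (1) is a direct application of Corollary \ref{ZeroAC} to $\nu$ — it requires neither hypothesis (a) nor (b) — so the substance lies in proving the $\gamma$-continuity assertion (2). I would fix $\epsilon>0$, and for each small $\delta>0$ decompose
\[
\nu = \nu_1 + \nu_2, \qquad \nu_1 := \nu|_{\D(\lambda_0,2\delta)}, \quad \nu_2 := \nu|_{\C \setminus \D(\lambda_0,2\delta)}.
\]
The objective is to show
\[
\gamma\bigl(\D(\lambda_0,\delta) \cap \{|\mathcal{C}(\nu)(\lambda)-\mathcal{C}(\nu)(\lambda_0)|>\epsilon\}\bigr) = o(\delta),
\]
and I will control the two Cauchy transforms $\mathcal{C}(\nu_1)$ and $\mathcal{C}(\nu_2)$ separately.

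For the near piece, the key observation is the identity $\mathcal{C}(\nu_1)(\lambda_0)=\mathcal{C}(\nu)(\lambda_0)-\mathcal{C}_{2\delta}(\nu)(\lambda_0)$, which by assumption (b) tends to $0$ as $\delta\to 0$. Hence for all sufficiently small $\delta$, $|\mathcal{C}(\nu_1)(\lambda_0)|<\epsilon/4$, so
\[
\{\lambda:|\mathcal{C}(\nu_1)(\lambda)-\mathcal{C}(\nu_1)(\lambda_0)|>\epsilon/2\} \subset \{\lambda:\mathcal{C}_*(\nu_1)(\lambda)>\epsilon/4\}.
\]
Tolsa's weak-$(1,1)$ estimate \eqref{WeakOneOne} combined with assumption (a) then gives
\[
\gamma(\{\mathcal{C}_*(\nu_1)>\epsilon/4\}) \leq \tfrac{4C_T}{\epsilon}|\nu|(\D(\lambda_0,2\delta)) = o(\delta),
\]
which disposes of the $\nu_1$ contribution.

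For the far piece, $\mathcal{C}(\nu_2)$ is analytic on $\D(\lambda_0,2\delta)$, and the bound $|w-\lambda|\ge |w-\lambda_0|/2$ valid for $\lambda\in\D(\lambda_0,\delta)$ and $w\in\text{spt}(\nu_2)$ yields the pointwise estimate
\[
|\mathcal{C}(\nu_2)(\lambda)-\mathcal{C}(\nu_2)(\lambda_0)| \leq 2|\lambda-\lambda_0|\int_{|w-\lambda_0|>2\delta}\frac{d|\nu|(w)}{|w-\lambda_0|^2}.
\]
Writing $F(r):=|\nu|(\D(\lambda_0,r))$ and integrating by parts, the remaining integral is at most $2\int_{2\delta}^\infty F(r)/r^3\,dr$; splitting at an auxiliary radius $r_0$ and using (a) via $\eta(r_0):=\sup_{0<r\le r_0} F(r)/r\to 0$ yields a two-scale bound of the form $\eta(r_0)/\delta + \|\nu\|/r_0^2$. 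Choosing $r_0$ small first and then $\delta$ even smaller bounds the $\nu_2$-difference pointwise by $\epsilon/2$ on $\D(\lambda_0,\delta)$. Combined with the previous estimate this gives the desired $o(\delta)$ bound. The main obstacle is this last estimate: assumption (a) only provides $F(\delta)=o(\delta)$ and does not imply that $\int |w-\lambda_0|^{-2}d|\nu|$ is finite, so one must trade the local $o(r)$ control against the global bound $\|\nu\|<\infty$ via the two-scale device.
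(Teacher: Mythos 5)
The paper does not prove this lemma; it simply cites \cite[Lemma 3.2]{acy19}. Your argument is correct and is the standard proof behind that result — split $\nu$ into a local piece near $\lambda_0$ controlled by Tolsa's weak-$(1,1)$ estimate \eqref{WeakOneOne} together with the zero-density hypothesis, and a tail whose Cauchy transform is analytic on $\D(\lambda_0,2\delta)$ and controlled by a direct Lipschitz bound plus the two-scale cut you describe — so there is nothing to flag.
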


For $\nu\in M_0(\C),$ define
\[
\ \mathcal {ND}(\nu) = \left\{\lambda  :\underset{\delta\rightarrow 0}{\overline\lim} \dfrac{|\nu |(\D(\lambda , \delta ))}{\delta} > 0 \right\}
\]
and $\mathcal {ZD}(\nu) = \C \setminus \mathcal {ND}(\nu).$ 
The following lemma is from \cite[Lemma 3.5]{acy19} (also see \cite[Lemma 8.12]{Tol14}).

\begin{lemma}\label{lemmaBasic}
Let $\nu\in M_0(\C)$. Suppose that $|\nu |\perp l_E$ for some Borel set $E\subset \R$. Then
 \[
 \ l_E(\mathcal {ND}(\nu)) = 0.
 \]
\end{lemma}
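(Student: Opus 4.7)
The plan is to mimic the standard one-dimensional Besicovitch-type differentiation argument, using a Vitali covering centered on the real line. First, I would unpack the hypothesis $|\nu| \perp l_E$ into a Borel decomposition $\C = A \sqcup B$ with $|\nu|(A) = 0$ and $l_E(B) = 0$. Writing
\[
\ F_n := \left\{\lambda \in E : \limsup_{\delta \to 0} \frac{|\nu|(\D(\lambda,\delta))}{\delta} > \frac{1}{n}\right\},
\]
one has $\mathcal{ND}(\nu) \cap E = \bigcup_{n \ge 1} F_n$ and $l_E(F_n \cap B) = 0$ automatically, so the whole lemma reduces to showing $l_E(F_n \cap A) = 0$ for each fixed $n$.

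Next, I would fix $n$ and $\epsilon > 0$ and, using finite regularity of $|\nu|$ together with $|\nu|(A) = 0$, pick an open set $V \supset A$ with $|\nu|(V) < \epsilon$. For every $\lambda \in F_n \cap A$, arbitrarily small radii $\delta > 0$ exist with $\D(\lambda, \delta) \subset V$ and $|\nu|(\D(\lambda, \delta)) > \delta / n$, so the family of all such disks is a Vitali cover of $F_n \cap A$ with vanishing diameters. The Vitali $5r$-covering lemma then provides a pairwise disjoint subfamily $\{\D(\lambda_i, \delta_i)\}$ with $F_n \cap A \subset \bigcup_i \D(\lambda_i, 5\delta_i)$.

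To finish, I would exploit that every center $\lambda_i$ lies on $\R$, so $\D(\lambda_i, 5\delta_i) \cap \R$ is an interval of length $10 \delta_i$, and hence
\[
\ l_E(F_n \cap A) \le \sum_i l_E(\D(\lambda_i, 5\delta_i)) \le \sum_i 10 \delta_i \le 10 n \sum_i |\nu|(\D(\lambda_i, \delta_i)) \le 10 n\, |\nu|(V) < 10 n \epsilon,
\]
where disjointness enters in the third inequality. Letting $\epsilon \to 0$ forces $l_E(F_n \cap A) = 0$, and summing over $n$ completes the argument. The only mildly subtle point I foresee is the mixed-dimensional nature of the estimate, since $|\nu|$ is planar while $l_E$ is one-dimensional; this is handled automatically because the Vitali disks are centered on $\R$, so the one-dimensional chord of length $10 \delta_i$ controls the $l_E$-measure of each dilated disk, and no deeper geometric measure theory input is required.
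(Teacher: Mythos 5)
Your argument is correct, and it fills in a proof for a statement that the paper simply cites from the literature (it points to Lemma 3.5 of Akeroyd--Conway--Yang and Lemma 8.12 of Tolsa's book rather than reproving it). What you wrote is essentially the standard Besicovitch--Vitali differentiation argument adapted to the mixed-dimensional setting: the Lebesgue decomposition $\C = A \sqcup B$ with $|\nu|(A)=0$ and $l_E(B)=0$, the reduction to the sets $F_n$ of positive upper $1$-density, an open neighborhood $V \supset A$ with small $|\nu|$-mass, and a $5r$-covering subfamily of disks centered on $\R$. The key mixed-dimensional point you flagged is handled correctly: since each center $\lambda_i$ lies on $\R$, the chord $\D(\lambda_i, 5\delta_i)\cap\R$ has length $10\delta_i$, so $l_E(\D(\lambda_i,5\delta_i)) \le 10\delta_i < 10n\,|\nu|(\D(\lambda_i,\delta_i))$, and disjointness plus $\D(\lambda_i,\delta_i)\subset V$ give $\sum_i |\nu|(\D(\lambda_i,\delta_i)) \le |\nu|(V) < \epsilon$. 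One small terminological note: you describe the family as a \emph{Vitali cover}, but what you actually invoke (and all that is needed) is the $5r$-covering lemma, which requires only uniformly bounded radii, not the stronger Vitali-cover property; this does not affect the validity of the argument. Measurability of $F_n$ is not an issue either, since the upper $1$-density of a finite Borel measure is Borel, and in any case your chain of inequalities only uses countable subadditivity, so it works even at the level of outer measure.
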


The following Lemma is from Lemma B in \cite{ars02}. In its statement we let $\area$ denote the area measure
(i.e., two-dimensional Lebesgue measure) on $\mathbb{C}$.

\begin{lemma} \label{lemmaARS}
There are absolute constants $\epsilon _1 > 0$ and $C_1 < \infty$ with the
following property. If $R > 0$ and $E \subset  \overline{\D(0, R)}$ with 
$\gamma(E) < R\epsilon_1$, then
\[
\ |p(\lambda)| \le \dfrac{C_1}{\pi R^2} \int _{\overline{\D(0, R)}\setminus E} |p|\, d\area
\]
for all $\lambda$ in $\D(0, \frac{R}{2})$ and all analytic polynomials $p$.
\end{lemma}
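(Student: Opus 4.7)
By the change of variables $p(z) \mapsto p(Rz)$, it suffices to treat the case $R = 1$: find universal constants $\epsilon_1, C_1$ such that $|p(\lambda)| \le \frac{C_1}{\pi}\int_{\overline{\D}\setminus E}|p|\,d\area$ holds for every polynomial $p$, every $\lambda\in \D(0,1/2)$, and every compact $E\subset \overline{\D}$ with $\gamma(E) < \epsilon_1$. The plan is to represent $p(\lambda)$ via the Cauchy--Pompeiu formula on a smooth cutoff around $\lambda$, and then to use the Tolsa machinery of Theorem \ref{TolsaTheorem} to modify the Cauchy kernel so that the representing integral is supported off $E$.

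First I would fix $\lambda\in \D(0,1/2)$ together with a smooth cutoff $\chi$ with $\chi \equiv 1$ on $\D(\lambda,1/4)$, $\text{supp}\,\chi \subset \D(\lambda,1/2) \subset \overline{\D}$, and $\|\bar\partial \chi\|_\infty \le C$. The Cauchy--Pompeiu formula applied to the compactly supported smooth function $\chi p$ yields
\[
p(\lambda) \;=\; -\frac{1}{\pi}\int \frac{\bar\partial \chi(w)\, p(w)}{w - \lambda}\, d\area(w),
\]
an integral whose integrand is supported in the annulus $A := \D(\lambda,1/2)\setminus \overline{\D(\lambda,1/4)}$, on which $|w - \lambda|\ge 1/4$. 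If $A\cap E = \emptyset$, the bound follows immediately with an absolute constant by crude estimation, so the substantive step is the case $A\cap E \neq \emptyset$.

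The main construction is the following. Since $\gamma(A\cap E) \le \gamma(E) < \epsilon_1$, the equivalence \eqref{GammaEq} produces a nonnegative measure $\sigma$ on $A\cap E$ with $\|\mathcal{C}(\sigma)\|_\infty \le 1$ and $\|\sigma\| \ge \gamma(A\cap E)/A_T$. Using $\sigma$ (and its analogues on a dyadic Vitushkin-type decomposition adapted to $E$), I would build a function $\Psi$ holomorphic on $\C\setminus E$ and bounded by an absolute constant on $\overline{\D}$ so that the modified kernel $K(w) := (w-\lambda)^{-1} - \Psi(w)$ still satisfies $|K|\le C$ on $A$ while $p(w)K(w)$ extends analytically across $E$ up to a controlled error. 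Substituting $K$ into the Cauchy--Pompeiu formula and invoking Stokes' theorem on the analytic piece produces a representation
\[
p(\lambda) \;=\; \int_{\overline\D \setminus E} p(w)\, g_\lambda(w)\, d\area(w), \qquad \|g_\lambda\|_\infty \le \tfrac{C_1}{\pi},
\]
from which $|p(\lambda)| \le (C_1/\pi) \int_{\overline\D\setminus E}|p|\,d\area$ follows by taking absolute values; undoing the scaling reinstates the factor $1/(\pi R^2)$.

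The main obstacle is keeping the construction of $\Psi$ quantitative enough that $C_1$ is universal. Here the threshold $\epsilon_1$ is chosen so that the corrections summed over the dyadic decomposition converge in a Neumann-type iteration: the semiadditivity \eqref{Semiadditive} guarantees that the capacities collected over the different cells add up to $O(\gamma(E))$, while the weak-$(1,1)$ bound \eqref{WeakOneOne} applied to $(\bar\partial\chi\cdot p)\,d\area$ controls the corresponding errors in the Cauchy transform. Once $\epsilon_1$ is small enough that the cumulative error contributes at most, say, half of the nominal $L^\infty$ norm in each iteration, the series converges and produces $g_\lambda$ with an absolute bound, completing the proof.
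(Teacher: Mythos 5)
The paper offers no proof here: the lemma is quoted as Lemma B of Aleman--Richter--Sundberg \cite{ars02}, so there is nothing in the text to compare against. Judged on its own, your proposal is only a plan, with the pivotal step both missing and internally inconsistent. The scaling to $R=1$, the cutoff, and the Cauchy--Pompeiu representation over the annulus $A$ are all fine, and invoking Tolsa's $\gamma\approx\gamma_+$ equivalence is in the right spirit. But you then ask for a bounded $\Psi$ holomorphic on $\C\setminus E$ such that $K(w)=(w-\lambda)^{-1}-\Psi(w)$ makes $p\,K$ ``extend analytically across $E$.'' This is backwards: $(w-\lambda)^{-1}$ is already holomorphic on all of $A$, including $A\cap E$ (since $\lambda\notin A$), and subtracting a function whose only possible singularities lie on $E$ can only create singularities there, not remove any; while if $\Psi$ is bounded and has no singularity on $E$, it is a constant and changes nothing. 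So the construction as described does not move toward the claimed representation, and the role played by the bound $\|\mathcal C(\sigma)\|_\infty\le 1$ supplied by \eqref{GammaEq} is never clarified.

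Beyond that, the asserted conclusion $p(\lambda)=\int_{\overline{\D}\setminus E}p\,g_\lambda\,d\area$ with $\|g_\lambda\|_\infty\le C_1/\pi$ is essentially a restatement of the lemma, and nothing in the sketch actually produces $g_\lambda$. The ``Neumann-type iteration'' is invoked without specifying what is iterated, in which norm, or why the threshold $\epsilon_1$ forces contraction, and applying \eqref{WeakOneOne} to $(\bar\partial\chi\cdot p)\,d\area$ gives control in terms of $\int_A|p|\,d\area$, which still includes $A\cap E$ --- exactly the contribution you are trying to excise --- so as written this step risks circularity. The lemma genuinely requires carrying out a quantitative Vitushkin-type approximation scheme exploiting $\gamma(E)<R\epsilon_1$, as ARS do, and that work has not been done here.
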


The following lemma is from \cite[Proposition 6.5]{Tol14}.

\begin{lemma}\label{BAForH1Finite}
 Let $\Gamma$ be a rectifiable curve. Let $E\subset \Gamma$ be compact with $l_\Gamma (E) <\infty.$  Let $f : \mathbb C_\i \setminus E\rightarrow \mathbb C$ be analytic such that $\|f\|_\infty \le 1$ and $f(\infty )=0$. Then there is a measure $\nu = b l_\Gamma |_E$, where $b$ is a measurable function with $|b(z)| \le 1$ for all $z\in E$, such that $f(z) = \mathcal C(\nu )(z)$ for all $z \notin E$.
 \end{lemma}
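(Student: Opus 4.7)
The plan is to recover the density $b$ as (a normalized version of) the jump of $f$ across $\Gamma$, form the candidate Cauchy transform, and verify the representation via a Painlev\'e-type removability argument.

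First I would establish the existence of non-tangential boundary values of $f$ from each side of $\Gamma$ at $l_\Gamma$-a.e. point of $E$. Since $\Gamma$ is rectifiable, $l_\Gamma$-a.e. point of $E$ is a tangent point of $\Gamma$; near such a point $\Gamma$ can be straightened into a Lipschitz graph, and the two adjacent Lipschitz Jordan subregions admit Riemann maps that are conformal up to the boundary. Fatou's theorem then produces non-tangential limits $f_+$ and $f_-$ from the two sides (exactly in the spirit of the arguments leading to \eqref{UNTLimitEq} and \eqref{LNTLimitEq}). The same setup yields non-tangential limits $\mathcal C_\pm(h\, l_\Gamma|_E)$ for any $h\in L^\infty(l_\Gamma|_E)$, together with the Plemelj--Sokhotski jump identity
\[
\mathcal C_+(h\, l_\Gamma|_E)(\zeta)-\mathcal C_-(h\, l_\Gamma|_E)(\zeta)=2\pi i\,\overline{\tau(\zeta)}\,h(\zeta),\quad l_\Gamma|_E\text{-a.e.},
\]
where $\tau(\zeta)$ is the unit tangent to $\Gamma$. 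Both facts hinge on the $L^2(l_\Gamma|_E)$-boundedness of the Cauchy singular integral on rectifiable sets, due to Calder\'on, Coifman--McIntosh--Meyer, and David.

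Next I would set $b(\zeta):=\frac{\tau(\zeta)}{2\pi i}(f_+(\zeta)-f_-(\zeta))$, so that automatically $b\in L^\infty(l_\Gamma|_E)$ with $|b|\le 1/\pi\le 1$. Then $g:=f-\mathcal C(b\, l_\Gamma|_E)$ is bounded analytic on $\mathbb C_\infty\setminus E$, vanishes at $\infty$, and its non-tangential limits from the two sides of $\Gamma$ coincide $l_\Gamma|_E$-a.e. by construction.

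The main obstacle is to upgrade this agreement of boundary values to genuine analyticity across $E$. I would use a Morera-type argument: for any small Jordan contour $\sigma$ crossing $\Gamma$ transversally at a tangent point of $E$, deform $\sigma$ into two contours parallel to $\Gamma$ on either side; the matching-jump condition forces the difference of the two oriented integrals to vanish as the deformation approaches $\Gamma$, so $\oint_\sigma g\,dz=0$. Hence $g$ extends analytically across $l_\Gamma|_E$-a.e.\ point of $E$; the residual exceptional set has zero $l_\Gamma$-measure and, being contained in the rectifiable curve $\Gamma$, zero analytic capacity (by comparison of $\gamma$ with length combined with Tolsa's semi-additivity, Theorem~\ref{TolsaTheorem}(2)). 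The standard removability theorem for bounded analytic functions off sets of zero analytic capacity then forces $g$ to be entire, and $g(\infty)=0$ gives $g\equiv 0$, yielding $f=\mathcal C(b\, l_\Gamma|_E)$ as required.
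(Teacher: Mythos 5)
The paper does not prove this lemma itself: it is a direct citation of \cite[Proposition 6.5]{Tol14}, so there is no internal argument to compare against. On the merits of your proposal, the opening steps are the right ingredients: a.e.\ existence of the non-tangential limits $f_\pm$ via the local Lipschitz-graph structure at tangent points, the Plemelj jump for $\mathcal C(h\,l_\Gamma|_E)$ (which does use the $L^2(l_\Gamma)$-boundedness of the Cauchy integral on rectifiable sets), and the normalization $b=\frac{\tau}{2\pi i}(f_+-f_-)$ with $|b|\le 1/\pi$.

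The gap is at the last step. The function $g:=f-\mathcal C(b\,l_\Gamma|_E)$ is \emph{not} known to be bounded: the Cauchy transform of an $L^\infty$ density on a rectifiable set is in general only in $\mathrm{BMO}$, not $L^\infty$ (already $\mathcal C(l_{[0,1]})(z)=\log\frac{z-1}{z}$ is unbounded near the endpoints). Hence the removability theorem you invoke, removability of a zero-$\gamma$ set for \emph{bounded} analytic functions, simply does not apply to $g$. The Morera deformation has the same problem: as the parallel contours collapse onto $\Gamma$, passing to the limit in the contour integrals requires some uniform integrability of $g$ near $E$ beyond mere a.e.\ agreement of non-tangential limits, and $g\in L^p_{\mathrm{loc}}$ for $p<2$ is not enough on its own. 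What the argument really needs is either a direct identification of the distribution $\bar\partial f$ as $-\pi\,b\,l_\Gamma|_E$ (so that $g\equiv 0$ follows without touching any boundedness of $\mathcal C(b\,l_\Gamma|_E)$), or a local Smirnov/$H^1$-type estimate for $g$ on each side of the Lipschitz graph so that Morera or a Privalov uniqueness theorem can be applied. As written, the proof does not close.
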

 
 Plemelj's formula for Lipschitz graphs as in X. Tolsa \cite[Theorem 8.8]{Tol14} is applied to the following lemma.
	
\begin{lemma}\label{PFLipschitz}
For all $f\in L^1(l_{\R})$, the non-tangential limits of $\mathcal C(f dz |_{\R})(z)$ exists for $l_{\R}-a.a.$ and the following identities hold $l_{\R}-a.a.$ for $\lambda\in \R$:
	\[
	 \ \dfrac{1}{2\pi i} \lim_{z\in U_n (\lambda, \alpha ) \rightarrow \lambda}\mathcal C(f dz|_{\Gamma})(z) = \dfrac{1}{2\pi i}\mathcal C(f dz|_{\R})(\lambda) + \dfrac{1}{2} f(\lambda),
	\]
	and
	\[
	 \ \dfrac{1}{2\pi i} \lim_{z\in L_n (\lambda, \alpha ) \rightarrow \lambda}\mathcal C(f dz|_{\Gamma})(z) = \dfrac{1}{2\pi i}\mathcal C(f dz|_{\R})(\lambda) - \dfrac{1}{2} f(\lambda).
	\]
\end{lemma}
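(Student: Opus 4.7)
The plan is to invoke Tolsa's Plemelj formula for Lipschitz graphs (Theorem 8.8 of \cite{Tol14}, cited in the paragraph preceding the lemma) in the degenerate case when the Lipschitz graph is the real line $\R$. Since $\R$ is trivially a Lipschitz graph (with Lipschitz constant zero), and since $f \in L^1(l_{\R})$ produces a finite complex measure $f\, dz|_{\R} \in M_0(\R)$, the hypotheses of Tolsa's theorem are satisfied.

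First, I would apply Tolsa's theorem to obtain, for $l_{\R}$-a.e.\ $\lambda \in \R$, the existence of the two non-tangential boundary values
\[
F^\pm(\lambda) := \lim_{z \to \lambda,\ z \in U_n(\lambda,\alpha)\ (\text{resp.}\ L_n(\lambda,\alpha))} \mathcal C(f\, dz|_{\R})(z),
\]
together with the classical jump and sum relations
\[
F^+(\lambda) - F^-(\lambda) = 2\pi i\, f(\lambda), \qquad F^+(\lambda) + F^-(\lambda) = 2\, \mathcal C(f\, dz|_{\R})(\lambda),
\]
where $\mathcal C(f\, dz|_{\R})(\lambda)$ is the principal-value Cauchy transform at $\lambda$, which exists $l_{\R}$-a.e.\ by Corollary \ref{ZeroAC} combined with the fact that $l_{\R}(\mathcal{ND}(f\,dz|_{\R}))=0$ (a standard density argument, or Lemma \ref{lemmaBasic} after decomposing $f\, dz|_\R$ into its absolutely continuous and singular parts with respect to $l_{\R}$; here the singular part vanishes).

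Next, I would simply solve the $2 \times 2$ linear system above for $F^+$ and $F^-$ and divide by $2\pi i$, which yields exactly the two identities in the lemma statement. The only routine bookkeeping is to verify that the truncated cones $U_n(\lambda,\alpha)$ and $L_n(\lambda,\alpha)$ defined in \eqref{ULConeDefEq1}--\eqref{ULConeDefEq2} are non-tangential approach regions to $\R$ in the sense required by \cite{Tol14}; this is immediate from their definitions as intersections of two-sided truncated Euclidean cones (with aperture governed by $\alpha$) and shrinking disks at the vertex $\lambda \in \R$.

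I do not anticipate a substantive obstacle: the lemma is essentially a direct transcription of the cited Plemelj formula into the notation of this paper. The only delicate points are matching the normalization (the factor of $2\pi i$ built into the definition of $\mathcal C$ in \eqref{CTDefinition} versus the classical Cauchy integral) and confirming the inclusion of the paper's cones into Tolsa's non-tangential regions, both of which are purely notational.
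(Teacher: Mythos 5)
Your proposal matches the paper exactly: the paper gives no explicit proof and simply states the lemma as a direct application of Tolsa's Plemelj formula for Lipschitz graphs \cite[Theorem 8.8]{Tol14}, viewing $\R$ as the degenerate (zero-slope) Lipschitz graph; the two displayed identities are precisely the sum and difference form of the classical jump relations after multiplying through by $1/(2\pi i)$, and the cones $U_n(\lambda,\alpha)$, $L_n(\lambda,\alpha)$ are standard non-tangential approach regions. One small inaccuracy in your supporting remark: $l_{\R}(\mathcal{ND}(f\,dz|_\R))=0$ is false in general, since at any Lebesgue point $\lambda$ of $f$ with $f(\lambda)\ne 0$ one has $|f\,dz|(\D(\lambda,\delta))/\delta \to 2|f(\lambda)|>0$; Lemma \ref{lemmaBasic} is for measures singular with respect to $l_\R$ and does not apply here. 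This does no harm, though, because Corollary \ref{ZeroAC} together with the comparability $\gamma\approx l$ on subsets of a line already gives $l_\R$-a.e.\ existence of the principal value, and in any case that existence is itself part of the conclusion of \cite[Theorem 8.8]{Tol14}.
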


\section{\textbf{Examples}}

For a compact subset $K\subset \C$ and $\mu\in M_0^+(K),$ the envelope $E = E(K,\mu)$ is the subset of $z\in K$ that satisfies the property: there exists  $\mu _z \in M_0(K)$, with $\mu _z$  absolutely continuous with respect to $\mu$, and such that $\mu_z(\{z\}) = 0$ and $\int f d\mu_z = f(z)$ for each $f \in \text{Rat}(K).$ Chaumat's Theorem says that if $R^\i(K, \mu)$ does not contain a $L^\infty$ summand, then the map $\rho(f) = f$ for $f\in \text{Rat}(K)$ extends an isometric isomorphism and a weak-star homeomorphism from $R^\i(K, \mu)$ onto $R^\i (K, \area |_E)$
(see \cite{cha74} or \cite[Chapter VI.3]{conway}). The map $\rho$ is called Chaumat's map.
We start with a simple version of Main Theorem (1) for $\rikmu$ as the following proposition.

\begin{proposition} \label{REEForSOB}
For a string of beads set $K$ and $\mu\in M_0^+(K),$ assume that $R^\infty(K, \mu)$ has no $L^\i$ summand and the envelope $E$ contains $\Omega.$
Then $\rho(f)\in H^\i (\Omega, \partial_i K)$ for $f\in R^\i (K, \mu).$
\end{proposition}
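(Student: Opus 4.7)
The plan is to apply Chaumat's theorem and then transfer the pointwise identity of non-tangential limits from rational approximants to $\rho(f)$ via weak-$*$ boundary value theory on the simply connected regions $\Omega_U$ and $\Omega_L$.

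First, since $R^\infty(K,\mu)$ has no $L^\infty$ summand, Chaumat's theorem realizes $\rho$ as a weak-$*$ isometric isomorphism from $R^\infty(K,\mu)$ onto $R^\infty(K, \area|_E)$. Because $E \supset \Omega$ and $E \setminus \Omega \subset \partial K$ has zero area measure, the latter algebra equals $R^\infty(K, \area|_\Omega)$; in particular, there is a uniformly bounded sequence $r_n \in \mathrm{Rat}(K)$ with $r_n \to \rho(f)$ weak-$*$ in $L^\infty(\area|_\Omega)$. By Montel's theorem this yields locally uniform convergence of $r_n$ to $\rho(f)$ on $\Omega$, so $\rho(f) \in H^\infty(\Omega)$.

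The key observation is that each $r_n$ has its poles off $K$ and is therefore analytic in a neighborhood of every $\lambda \in K \cap \R$, so $(r_n)_+(\lambda) = r_n(\lambda) = (r_n)_-(\lambda)$ pointwise. To propagate this equality to $\rho(f)$, I would exploit that $\Omega_U$ and $\Omega_L$ are simply connected with rectifiable boundary (the total arc length of the removed circles being at most $\pi(1+\sum_n r_n) < 2\pi$). Precomposing with Riemann maps from $\D$ to these regions, as the paper already does preceding \eqref{UNTLimitEq}, identifies $H^\infty(\Omega_U)$ and $H^\infty(\Omega_L)$ with $H^\infty(\D)$ as weak-$*$ Banach algebras and makes the non-tangential boundary value operator a weak-$*$ continuous isometric isomorphism onto $L^\infty$ of the respective boundaries with harmonic measure. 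By the F.\ and M.\ Riesz theorem these harmonic measures are mutually absolutely continuous with $l_\R$ on $K \cap \R$, so the locally uniform convergence $r_n \to \rho(f)$ on $\Omega_U$ (and on $\Omega_L$) yields weak-$*$ convergence $(r_n)_+ \to \rho(f)_+$ and $(r_n)_- \to \rho(f)_-$ in $L^\infty(K \cap \R, l_\R)$.

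Since $(r_n)_+$ and $(r_n)_-$ coincide as functions on $K \cap \R$, uniqueness of the weak-$*$ limit forces $\rho(f)_+ = \rho(f)_-$ for $l_\R$-a.e.\ $\lambda \in K \cap \R = \partial_i K$, i.e., $\rho(f) \in H^\infty(\Omega, \partial_i K)$. The main technical point is justifying the weak-$*$ boundary value theory on $\Omega_U$ and $\Omega_L$: when the beads accumulate these domains need not be Jordan, so Carath\'eodory's theorem does not apply directly; nevertheless, simple connectivity together with rectifiability of the boundary is enough to invoke the standard Hardy space machinery via conformal mapping to $\D$, at which point the uniqueness of weak-$*$ limits immediately concludes the proof.
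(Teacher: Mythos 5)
Your argument is correct and reaches the same conclusion, but it is organized differently from the paper's. The paper does not work with $\area|_\Omega$ directly: it introduces the auxiliary measure $\omega=\omega_a+\omega_b$ (sum of the harmonic measures of $\Omega_U$ and $\Omega_L$), checks that $R^\infty(K,\omega)$ has no $L^\infty$-summand with envelope containing $\Omega$, and then uses Chaumat's theorem to identify $R^\infty(K,\mu)\cong R^\infty(K,\area_K)\cong R^\infty(K,\omega)$. The payoff is that the element corresponding to $f$ then lives in $L^\infty(\omega)$, i.e.\ it is already a single boundary function on $\partial\Omega_U\cup\partial\Omega_L$; weak-star closedness of $H^\infty$ boundary values in $L^\infty(\omega_a)$ and $L^\infty(\omega_b)$ immediately gives $f_+=f=f_-$ $\omega_a$- resp.\ $\omega_b$-a.e., and mutual absolute continuity with $l_{\partial_i K}$ finishes it. You instead stay with $\area|_\Omega$, pass to a bounded weak-star convergent sequence $r_n\to\rho(f)$, upgrade to local uniform convergence on $\Omega_U$ and $\Omega_L$, and then argue that boundary values converge weak-star; this is fine, but note that the convergence you get is naturally in $L^\infty(\omega_a)$ and $L^\infty(\omega_b)$, not in $L^\infty(l_\R)$ (the Radon--Nikodym derivatives need not be bounded), so the "uniqueness of weak-star limit" step needs a short extra transfer via the RN derivative before you can conclude $\rho(f)_+=\rho(f)_-$ $l_\R$-a.e. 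Also, your concern that $\Omega_U$, $\Omega_L$ may fail to be Jordan is unfounded: because the removed closed disks are pairwise disjoint, $\partial\Omega_U$ and $\partial\Omega_L$ are rectifiable Jordan curves (the paper uses this explicitly), so Carath\'eodory and F.\ and M.\ Riesz apply directly. Both routes are valid; the paper's choice of $\omega$ simply bakes the boundary identification into the Chaumat isomorphism and avoids the locally-uniform-to-boundary-values step.
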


\begin{proof}
Let $a = \frac{1}{2}i$ and $b = - \frac{1}{2}i$, where $i = \sqrt{-1}$. Let $\omega_a$ and $\omega_b$ be the harmonic measures of $\Omega_U$ and $\Omega_L$ evaluated at $a$ and $b$,
 respectively. Let $\omega = \omega_a + \omega_b.$ 
 It is clear that $\int r(z)(z-a)(z-b) d \omega(z) = 0$ for $r\in Rat(K)$. So $R^\i(K, \omega)$ has no $L^\i$ summand and the envelope $E(K,\omega) \supset \Omega.$ From Chaumat's Theorem, the Chaumat map $\rho$ 
 is an isometric isomorphism and a weak-star homeomorphism from $R^\i(K, \omega)$ onto $R^\i (K, \area_K).$ 
 
Let $\{r_n\}\subset Rat(K)$ be 
such that $r_n \rightarrow f$ in $L^\infty (\omega)$ weak-star topology. Hence, $f\in H^\infty (\Omega_U)$ and $f\in H^\infty(\Omega_L)$. 
Thus, $f_+(z)$ exists, $f_+(z) = f(z),~ \omega_a |_{\partial _i K}-a.a.$; and $f_-(z)$ exists, 
$f_-(z) = f(z),~ \omega_b |_{\partial _i K}-a.a.$. Since $\partial \Omega_U$ and $\partial \Omega_L$ 
are rectifiable Jordan curves, the measures $\omega_a |_{\partial _i K}$, $\omega_b |_{\partial _i K}$, and $l_{\partial _i K}$ are mutually absolutely continuous. Therefore, 
 \[
 \ f_+(z) = f_-(z) = f(z),~ l_{\partial _i K}-a.a.
 \]
 Hence, $R^\i (K, \area_K) \subset H^\i (\Omega, \partial_i K).$ Now the proof follows from Chaumat's Theorem.
\end{proof}

However, for $1\le t <\infty$, the situations for $R^t(K, \mu)$ are very different. The following example constructs a measure $\mu$ such that $S_\mu$ on $R^2(K, \mu)$ is pure, $\text{abpe}(R^2(K, \mu)) = \Omega$, and $R^2(K, \mu)$ splits.

\begin{example}\label{SOBExample1}
Let $\area_W$ be the weighted planar Lebesgue measure $W(z)d\area(z)$ on $\Omega$, where
 \[
 \ W(z) = \begin{cases}  \text{dist}(z, \mathbb R)^4, & z \in \Omega_U\cup \Omega_L;\\0. & z \in (\Omega_U\cup \Omega_L)^c.  \end{cases}
 \]
Then,
 \begin{eqnarray}\label{SOBEq1}
 \ R^2(K, \area_W) = R^2(K_U, \area_W |_{\Omega_U}) \oplus R^2(K_L, \area_W |_{\Omega_L}).
 \end{eqnarray}  
\end{example} 

\begin{proof}
It is easy to verify that $S_{\area_W}$ is pure and $\text{abpe}(R^2(K, \area_W)) = \Omega$.
Let $\lambda\in \partial_i K$. Choose a subsequence $\{\lambda_{n_k}\}$ such that $\lambda_{n_k}\rightarrow \lambda$. For $g\perp R^2(K, \area_W)$, we have
 \[
 \ \begin{aligned}
 \ & \int \left | \dfrac{1}{z-\lambda_{n_k}} - \dfrac{1}{z-\lambda} \right | |g|d\area_W \\
 \ \le &|\lambda_{n_k} - \lambda| \left(\int _G \dfrac{W(z)}{|z-\lambda_{n_k}|^2 |z - \lambda|^2}d\area(z) \right )^{\frac{1}{2}} \|g\|_{L^2(\area_W)} \\
 \ \le &\sqrt{\pi}\|g\|_{L^2(\area_W)}|\lambda_{n_k} - \lambda| \\
 \ & \rightarrow 0.
 \ \end{aligned} 
\]
Thus, $\mathcal C(g\area_W)(\lambda) = 0$ for $\lambda\in \partial _i K$. Let $\nu$ be $\dfrac{1}{2\pi i}dz$ restricted to upper circle of $|z| = 2$ and the interval $[-2,2]$. By Fubini's theorem, we see that 
 \[
 \ \int \mathcal C(\nu)(z) g(z) d\area_W(z) = - \int \mathcal C(g\area_W)(\lambda) d\nu (\lambda) = 0.
 \]
Hence, $\chi_{\Omega_U}(z) \in R^2(K, \area_W)$, where $\chi_{A}$ is the characteristic function for the set $A$. \eqref{SOBEq1} is proved.   
\end{proof}
\smallskip

Now let us look at a simple example of $\mu$ such that $S_\mu$ is irreducible.

\begin{example}\label{SOBExample2}
Let $a,$ $b,$ $\omega_a,$ $\omega_b,$ and  $\omega$ be as in the beginning of the proof of Proposition \ref{REEForSOB}.  Then, for every $f\in R^2(K, \omega)$, the non-tangential limit from the top on $\partial _i K$,
  $f_+(z)$,  and the non-tangential limit from 
  the 
  bottom on $\partial _i K$, $f_-(z)$, exist $l_{\partial _i K}-a.a.$, and $f_+(z) = f_-(z) = f(z),~ l_{\partial _i K}-a.a..$ Consequently, $S_\omega$ is irreducible.     
\end{example}

\begin{proof} The same argument in the proof of Proposition \ref{REEForSOB} applies.
\end{proof}

The examples above 
suggest that, in order to ensure that $S_\mu$ on $R^t(K, \mu)$ is irreducible, 
the 
values of every function $f\in R^t(K, \mu)$ on $\Omega_U$ and $\Omega_L$ must be associated in some ways. Main Theorem points out the relationship. 

Assuming our Main Theorem has been proved, we have the following proposition that combines Example \ref{SOBExample1} and \ref{SOBExample2}.

\begin{proposition}\label{SOBRemovable}
Let $K$ be a string of beads set.
If $F\subset \partial_i K$ is a compact subset with $l_{\partial_i K}(F) > 0$, then there exists $\mu \in M_0^+(K)$ such that $S_\mu$ on $R^2(K, \mu)$ is irreducible, $\text{abpe}(R^2(K, \mu)) = \Omega$, and $E_0 = F.$ 

\end{proposition}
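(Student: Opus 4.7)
The plan is to construct $\mu$ as a hybrid of the two preceding examples: a weighted area piece (as in Example \ref{SOBExample1}) to permit top/bottom decoupling along $\R$, plus harmonic-measure pieces supported only on $F$ (as in Example \ref{SOBExample2}) to enforce the gluing $f_+ = f_-$ there and nowhere else. With $a,b,\omega_a,\omega_b$ as in Example \ref{SOBExample2} and $\area_W$ as in Example \ref{SOBExample1}, set
\[
 \mu \ := \ \area_W \; + \; \omega_a|_F \; + \; \omega_b|_F.
\]
Since $\Omega \cap \R = \emptyset$, the weight $W$ is strictly positive throughout $\Omega$, so a standard Cauchy estimate yields $\text{abpe}(R^2(K,\mu)) = \Omega$. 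Together with the irreducibility check below, $S_\mu$ is pure and the Main Theorem produces a removable boundary $E_0 \subset \partial_i K$. The task reduces to showing $E_0 = F$ modulo $l_{\partial_i K}$-null sets.

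For $F \subset E_0$, repeat the argument of Example \ref{SOBExample2}. If $f \in R^{2,\infty}(K,\mu)$ and $r_n \in \text{Rat}(K)$ with $r_n \to f$ in $L^2(\mu)$ and uniformly bounded in $L^\infty(\mu)$, the convergence restricts to $L^2(\omega_a|_F)$ and $L^2(\omega_b|_F)$; because $\partial\Omega_U,\partial\Omega_L$ are rectifiable Jordan curves, $\omega_a|_F$, $\omega_b|_F$, and $l_F$ are mutually absolutely continuous, so the two limits recover the non-tangential upper and lower traces $f_\pm$ of the bounded analytic extensions $\rho(f)|_{\Omega_U},\rho(f)|_{\Omega_L}$. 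Thus $f_+ = f_- = f$ holds $l_F$-a.e. Irreducibility of $S_\mu$ follows at once: any $\chi_A \in R^{2,\infty}(K,\mu)$ maps under $\rho$ to an idempotent in $H^\infty(\Omega_U) \oplus H^\infty(\Omega_L)$, hence to one of $0,1,\chi_{\Omega_U},\chi_{\Omega_L}$; the last two have $f_+ \ne f_-$ on $F$ and so are excluded, leaving only $\chi_A \in \{0,1\}$.

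For the reverse inclusion $E_0 \subset F$, fix a Lebesgue-density point $\lambda_0$ of $G := \partial_i K \setminus F$ in $\R$ and choose $h \in C_c^\infty(\R)$ with compact support in $G$ and $h \equiv 1$ on a neighborhood of $\lambda_0$. Set $u(z) := \CT(h\,dl_\R)(z)$. By Plemelj (Lemma \ref{PFLipschitz}), $u_+ - u_- = h$ holds $l_\R$-a.e., so $u \in H^\infty(\Omega, F)$ while $u_+ \ne u_-$ on a positive-length subset of $G$ near $\lambda_0$. If we can show $u \in R^{2,\infty}(K,\mu)$, then part~(1) of the Main Theorem forces $u_+ = u_-$ $l_{E_0}$-a.e., so $l(E_0 \cap \{h \ne 0\}) = 0$; letting $\lambda_0$ range over a countable collection of density points covering $G$ yields $l(E_0 \setminus F) = 0$.

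The main obstacle is placing $u$ in $R^2(K,\mu)$. The plan is a Vitushkin-type construction. Since $K \cap \R$ has empty interior in $\R$, the beads $\D(\lambda_n,r_n)$ cluster densely along $G$, so one approximates $u$ by Riemann sums of $\CT(h\,dl_\R)$ whose sample poles, originally at points $x_k \in \text{supp}(h) \subset G \subset K$, are shifted to nearby $\lambda_{n_k}$ inside beads, producing rationals $r_N \in \text{Rat}(K)$ with $r_N \to u$ uniformly on $\C \setminus T_\epsilon$, where $T_\epsilon := \{z : \text{dist}(z,\text{supp}(h)) < \epsilon\}$. Inside $T_\epsilon$, $r_N$ and $u$ stay uniformly bounded while the weighted-area error satisfies $\int_{T_\epsilon} \text{dist}(z,\R)^4\,d\area \le C\,|\text{supp}(h)|\,\epsilon^5$ because $\text{supp}(h) \subset \R$. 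Since $\text{supp}(h) \subset G$ is disjoint from $F$, for small $\epsilon$ the tube $T_\epsilon$ does not meet $F$, so the harmonic-measure pieces $\omega_a|_F + \omega_b|_F$ see only the uniformly small sup-norm error. Combining these estimates yields $r_N \to u$ in $L^2(\mu)$, completing the proof.
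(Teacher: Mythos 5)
Your construction and both directions of your argument diverge from the paper in ways that introduce real gaps.

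\textbf{The measure.} You take $\mu = \area_W + \omega_a|_F + \omega_b|_F$, where $\omega_a,\omega_b$ are the harmonic measures of $\Omega_U,\Omega_L$ \emph{restricted} to $F$. The paper instead builds two new rectifiable Jordan curves $\Gamma_U\subset K_U$ and $\Gamma_L\subset K_L$ with $\Gamma_U\cap\partial\Omega_U=\Gamma_L\cap\partial\Omega_L=F$, lets $G_U,G_L$ be the domains they bound, and sets $\mu=\omega_a+\omega_b+\area_W$ where $\omega_a,\omega_b$ are the \emph{full} harmonic measures of $G_U,G_L$. This distinction is not cosmetic: the argument that glues the top and bottom traces on $F$ (i.e., $F\subset E_0$, irreducibility) runs through the classical Jordan-domain theory --- $\int r(z)(z-a)\,d\omega_a=0$ for $r\in\text{Rat}(K)$, $L^2(\omega_a)$-convergence of rationals gives $H^2(G_U)$-convergence, and a.e.~non-tangential boundary values on $\Gamma_U$ then recover $f$. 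All of that requires convergence on a \emph{full} Jordan boundary. With your $\omega_a|_F$ you only get $L^2$-convergence of the $r_n$ on $F$, which by itself does not imply the limit is a non-tangential trace of $\rho(f)|_{\Omega_U}$; the $r_n$ could misbehave on $\partial\Omega_U\setminus F$ and the $H^2$ machinery gives you nothing. On top of this, your opening assumption that the approximating $r_n$ can be taken ``uniformly bounded in $L^\infty(\mu)$'' is unjustified --- for $f\in R^{2,\infty}(K,\mu)$ there is no a priori $L^\infty$-bounded approximating sequence.

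\textbf{The reverse inclusion.} For $E_0\subset F$ you propose a Vitushkin-type construction: discretize $\CT(h\,dl_\R)$ by a Riemann sum with poles nudged into nearby beads, and control the $L^2(\mu)$ error. This has a concrete defect: your claim that ``inside $T_\epsilon$, $r_N$ and $u$ stay uniformly bounded'' is false. On $K$, $|r_N(z)|\le\sum_k|h(x_k)|\Delta x_k/r_{n_k}$, and nothing prevents the radii $r_{n_k}$ of the chosen beads from being much smaller than the mesh $\Delta x_k$; the sum can be unbounded. The weight $W=\text{dist}(z,\R)^4$ does help near the beads, but the estimate is delicate and you have not carried it out. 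The paper avoids all of this with a clean duality argument: if a positive-length compact $F_0\subset\partial_iK\setminus F$ lay in $E_0$, take a nonzero bounded analytic $f$ on $\C_\infty\setminus F_0$, write $f=\CT(w\,l_{\partial_iK})$ by Lemma~\ref{BAForH1Finite}, and verify via Fubini that $\int fg_j\,d\mu=0$ for all annihilators $g_j$. The Fubini interchange is legitimate precisely because $\text{dist}(F_0,\Gamma_U\cup\Gamma_L)>0$ (both are compact and disjoint) and $W$ vanishes to fourth order on $\R$, giving the uniform bound $\int|z-\lambda|^{-1}|g_j|\,d\mu\lesssim\|g_j\|$ for $\lambda\in F_0$. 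Then $f\in R^2(K,\mu)$ with $f_+\ne f_-$ on $F_0$, contradicting Main Theorem~(1). I'd recommend adopting the paper's choice of $\mu$ (auxiliary curves $\Gamma_U,\Gamma_L$), which makes the ``$F\subset E_0$'' direction literally a rerun of Example~\ref{SOBExample2}, and replacing your Vitushkin scheme with the duality argument for ``$E_0\subset F$.''
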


\begin{proof}
Let $\Gamma_U$ be a closed rectifiable Jordan curve in $K_U$ such that $\Gamma_U\cap \partial \Omega_U = F$. Let $\Gamma_L$ be a closed  rectifiable Jordan curve in $K_L$ such that $\Gamma_L\cap \partial \Omega_L = F$. Let $G_U$ and $G_L$ be the regions surrounded by $\Gamma_U$ and $\Gamma_L$, respectively. Let $\omega_a$ and $\omega_b$ be the harmonic measures of $G_U$ and $G_L$, respectively, where $a\in G_U$ and $b\in G_L$. Let $\area_W$ be the weighted planar Lebesgue measure as in Example \ref{SOBExample1}. Let $\mu = \omega_a + \omega_b + \area_W$. 

Similarly to the proof of Example \ref{SOBExample2}, we see that $f_+(z) = f_-(z) = f(z)$ on $F$ $l_{\partial_i K}-a.a.$. Hence, $S_\mu$ is irreducible. Obviously, $\text{abpe}(R^2(K, \mu)) = \Omega$.

Suppose there exists a compact subset $F_0\subset \partial_i K\setminus F$ that is a part of $E_0$ and $l_{\partial_i K}(F_0) > 0$.  Let $f$ be a bounded analytic function on $\C_\i\setminus F_0$ with $\|f\|_\infty = 1.$  Using Lemma \ref{BAForH1Finite}, we have $f = \mathcal C(wl_{\partial_i K})$, where $w$ is bounded and supported in $F_0$. 
Let $\{g_j\}$ be a dense subset of $R^2(K, \mu) ^\perp.$
As in the proof 
of Example \ref{SOBExample1}, we see that $\dfrac{1}{z-\lambda} \in R^2(K, \mu)$ for $\lambda \in F_0$. Moreover,
 \[
 \ \begin{aligned}
 \ & \int \dfrac{1}{|z-\lambda|} |g_j|d\mu \\
 \ \le & \int _{\Gamma_U}\dfrac{1}{|z-\lambda|} |g_j|d\omega_a + \int _{\Gamma_L}\dfrac{1}{|z-\lambda|} |g_j|d\omega_b + \int _\Omega\dfrac{W(z)}{|z-\lambda|} |g_j|d\area \\
 \ \le & \dfrac{\|g_j\|}{\text{dist}(\lambda, \Gamma_U)} + \dfrac{\|g_j\|}{\text{dist}(\lambda, \Gamma_L)} + \|g_j\| \\
 \ \end{aligned}
 \]
for $\lambda\in F_0$. Using Fubini's theorem, we have
 \[
 \ \int f(z) g_j(z) d\mu(z) = - \int \mathcal C( g_j\mu)(\lambda) w(\lambda) d l_{\partial_i K}(\lambda) = 0. 
 \]
Hence, $f\in R^2(K,\mu)$. This contradicts 
 Main Theorem  
because, by Lemma \ref{PFLipschitz}, $f_+$ and $f_-$ are not identically equal with respect to $l_{\partial_i K}$ on $F_0$.
\end{proof}

\section{\textbf{Construction of $E_0$ and proof of Main Theorem (1)}}

We assume that $1 \le t < \infty,$ $K$ is a string of beads set, $\mu\in M_0^+(K),$ $S_\mu$ on $R^t(K,\mu)$ is pure, and $\text{abpe}(R^t(K,\mu)) = \Omega (= \text{int}(K)).$ 

Let $\mu  = hl_{\partial_i K}  + \mu_s$ be the Radon-Nikodym decomposition with respect to $l_{\partial_i K}$ on $\mathbb R$, where $h \in L^1(l_{\partial_i K})$ and $\mu_s \perp  l_{\partial_i K}$. We define $\mathcal N(h) = \{z\in \partial_i K, ~ h(z) \ne 0\}$ and $\mathcal Z(h) = \{z\in \partial_i K, ~ h(z) = 0\}.$ 

Let $\{g_j\} \subset R^t(K,\mu)^\perp$ be a dense set. For $f\in R^t(K, \mu)$, we see that $f(z)$ is analytic on $\text{abpe}(R^t(K,\mu)) = \Omega$ and $\frac{f(z)-f(\lambda)}{z-\lambda}\in R^t(K,\mu)$ for $\lambda\in \Omega.$ Therefore, by Corollary \ref{ZeroAC},
 \begin{eqnarray}\label{BasicEqn1}
 	f(\lambda) \mathcal C(g_j\mu) (\lambda) = \mathcal C(fg_j\mu) (\lambda), ~ \lambda\in \Omega, ~ \gamma-a.a.
 \end{eqnarray}
 for all $j\ge 1.$
 
 \begin{lemma}\label{NTLimitLemma}
Let $f(z)$ be analytic on $U_{n_\lambda}(\lambda, \alpha)$ for some $\lambda \in \partial_i K$ and $n_\lambda \ge 1$ such that \eqref{ULConeDefEq3} holds. If for all $\epsilon > 0$, 
\[
 \ \lim_{n\rightarrow \infty} n\gamma(U_{n}(\lambda, \alpha)\cap \{|f(z) - a| > \epsilon\}) = 0,
 \]
then for $0 < \theta < 1$,
 \[
 \ \underset{z\in U_{n}(\lambda, \theta \alpha)}{\lim_{n\rightarrow \infty}}f(z) = a.
 \]
 The same result holds if we replace $U_{n}(\lambda, \alpha)$ by $L_{n}(\lambda, \alpha).$
\end{lemma}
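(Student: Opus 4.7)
The plan is to transfer the capacity-decay hypothesis into a pointwise upper bound on $|f(z) - a|$ by applying Lemma \ref{lemmaARS} to $f - a$ on a small disk around each $z_0 \in U_n(\lambda, \theta\alpha)$ whose radius is comparable to $|z_0 - \lambda|$. The geometric input I would record first is that when the aperture shrinks from $\alpha$ to $\theta\alpha$, every $z_0$ in the narrower cone lies at Euclidean distance at least $c(\theta,\alpha)|z_0-\lambda|$ from the two boundary rays of the wider cone. Setting $R := c|z_0-\lambda|/2$, the disk $\overline{\D(z_0,R)}$ is contained in $U_{n_\lambda}(\lambda,\alpha)$ as soon as $(1+c/2)|z_0-\lambda| < 1/n_\lambda$, so $f$ is analytic there.

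Next, fix $\epsilon > 0$, set $E := \overline{\D(z_0,R)}\cap \{|f-a|>\epsilon\}$, and let $m$ be the largest integer with $\overline{\D(z_0,R)} \subset \D(\lambda,1/m)$. From $\overline{\D(z_0,R)} \subset \D(\lambda,(1+c/2)|z_0-\lambda|)$ and $|z_0-\lambda|<1/n$ one obtains $m \ge n/(1+c/2) - 1$, so $m \to \infty$ uniformly over $z_0 \in U_n(\lambda,\theta\alpha)$ as $n \to \infty$. Combining the inclusion $E \subset U_m(\lambda,\alpha)\cap \{|f-a|>\epsilon\}$ with the hypothesis $m\gamma(U_m(\lambda,\alpha)\cap\{|f-a|>\epsilon\}) \to 0$ then yields $\gamma(E) = o(1/m) = o(R)$, so $\gamma(E) < R\epsilon_1$ for all $n$ beyond some threshold $N = N(\epsilon)$ independent of $z_0$, where $\epsilon_1$ is the absolute constant from Lemma \ref{lemmaARS}.

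At that stage I would invoke a translated version of Lemma \ref{lemmaARS} for $f - a$ on $\overline{\D(z_0,R)}$; the polynomial statement extends to any function analytic on a neighborhood of the closed disk by uniform polynomial approximation (Runge's theorem on the disk), and translating the center from $0$ to $z_0$ is harmless. Since $z_0$ is the center of the disk and hence lies in $\D(z_0, R/2)$, this produces
\[
 \ |f(z_0) - a| \le \dfrac{C_1}{\pi R^2}\int_{\overline{\D(z_0,R)}\setminus E} |f-a|\, d\area \le C_1 \epsilon,
\]
and letting $\epsilon \downarrow 0$ finishes the upper-cone case; the lower-cone case is identical after reflection across $\R$.

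The main obstacle I anticipate is the geometric/capacity bookkeeping that ties $R$, $|z_0-\lambda|$, and the index $m$ together: one must cash the cone-aperture reserve $\alpha - \theta\alpha$ for a $z_0$-uniform lower bound on $R/|z_0-\lambda|$ and verify that the $o(1/m)$ decay of $\gamma(U_m(\lambda,\alpha) \cap \{|f-a|>\epsilon\})$ is enough to satisfy the smallness requirement $\gamma(E) < R\epsilon_1$ in Lemma \ref{lemmaARS}. The Runge-type extension of that lemma from polynomials to analytic functions is routine and will not be delicate.
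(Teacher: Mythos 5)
Your proposal is correct and is essentially the same as the paper's argument: exploit the aperture gap $\alpha - \theta\alpha$ to fit a disk of radius comparable to $|z_0-\lambda|$ around each point of the narrower cone inside $U_m(\lambda,\alpha)$, use the hypothesis $m\gamma(\cdot)\to 0$ to make the exceptional set's capacity small relative to that radius, and apply Lemma \ref{lemmaARS} at the center. Your extra remarks about passing from polynomials to analytic functions and about uniformity in $z_0$ are sound and only make explicit points the paper leaves implicit.
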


\begin{proof} Let $n > n_\lambda.$
There exists a constant $0 < \theta_0 < 1$ only depending on $\theta$ such that $\D(w, \theta_0\frac 1n)\subset U_n(\lambda, \alpha)$ for $w\in U_{2n}(\lambda, \theta \alpha)$ and $\frac{1}{2(n+1)} < |w - \lambda | \le \frac{1}{2n}$. From the assumption, we find $n_\epsilon$ such that for $n > n_\epsilon,$ 
 \[
 \ \gamma(U_n(\lambda, \alpha)\cap \{|f(z) - a| > \epsilon\}) < \epsilon_1 \theta_0\frac 1n,
 \]
where $\epsilon_1$ is as in Lemma \ref{lemmaARS}. Applying  Lemma \ref{lemmaARS}, we conclude that
 \[
 \ |f(w) - a| \le \dfrac{C_1n^2}{\pi}\int _{\D(w,\theta_0\frac 1n)\setminus \{|f(z) - a| > \epsilon\}}|f(z) - a| d\area (z) \le C_{2} \epsilon 
 \]
for $w\in U_{2n}(\lambda, \theta \alpha)$ and $\frac{1}{2(n+1)} < |w - \lambda | \le \frac{1}{2n}$.
The lemma follows.
\end{proof}
 
 \begin{lemma}\label{CTDefinedOnE}
 The principle value $\lim_{\epsilon \rightarrow 0} \mathcal C_\epsilon (g\mu) (z) = \mathcal C(g\mu) (z)$ exists $l_{\partial_i K}-a.a.$ for $g\in L^1(\mu)$. 
 \end{lemma}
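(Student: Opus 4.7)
The plan is to apply Corollary \ref{ZeroAC} directly to the complex measure $g\mu$ and then observe that the resulting exceptional ``analytic capacity zero'' set meets the real line in an $l_{\mathbb R}$-null set.

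Concretely, since $g\in L^1(\mu)$ the complex measure $g\mu$ belongs to $M_0(\mathbb C)$, so Corollary \ref{ZeroAC} produces some $\mathcal Q\subset \mathbb C$ with $\gamma(\mathcal Q)=0$ off which $\lim_{\epsilon\to 0}\mathcal C_\epsilon(g\mu)(z)=\mathcal C(g\mu)(z)$ exists. The entire argument then reduces to showing $l_{\partial_i K}(\mathcal Q)=0$. For this I would invoke the classical capacity--length bound on $\mathbb R$: every compact $F\subset \mathbb R$ with $l_{\mathbb R}(F)>0$ satisfies $\gamma(F)>0$. Equivalently, via Theorem \ref{TolsaTheorem}(1), $\gamma_+(F)>0$, which follows from Calderón's $L^2$-boundedness of the Cauchy integral on the line together with a $T(b)$-type construction that produces a positive measure on $F$ of mass comparable to $l_{\mathbb R}(F)$ whose Cauchy transform is uniformly bounded. (This is the same bound tacit in Lemma \ref{BAForH1Finite} and in the proof of Proposition \ref{SOBRemovable}.) Applied to $\mathcal Q\cap \mathbb R$, the inequality $\gamma(\mathcal Q\cap \mathbb R)\le \gamma(\mathcal Q)=0$ forces $l_{\mathbb R}(\mathcal Q\cap \mathbb R)=0$, and the containment $\partial_i K\subset \mathbb R$ yields $l_{\partial_i K}(\mathcal Q)=0$.

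The only mildly delicate step is invoking that capacity-to-length inequality cleanly. A more self-contained alternative is to split $\mu = h\,l_{\partial_i K}+\mu_s$ via Radon--Nikodym, handle the absolutely continuous piece $gh\,l_{\partial_i K}\in L^1(l_{\mathbb R})$ by Plemelj's formula (Lemma \ref{PFLipschitz}), and for the singular part $g\mu_s$ apply Lemma \ref{lemmaBasic} to secure the density condition (a) of Lemma \ref{CauchyTLemma} at $l_{\partial_i K}$-a.a.\ $\lambda$, followed by Corollary \ref{ZeroAC} to exhibit a single $\lambda_0$ at which (b) also holds, and finally Lemma \ref{CauchyTLemma}(1). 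Either route delivers the $l_{\partial_i K}$-a.e.\ existence of the principal value.
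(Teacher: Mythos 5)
Your primary route is essentially the same as the paper's one-line proof: apply Corollary~\ref{ZeroAC} to $g\mu\in M_0(\mathbb C)$ to obtain a $\gamma$-null exceptional set~$\mathcal Q$, and then invoke the comparability of analytic capacity and length on the line to conclude $l_{\partial_i K}(\mathcal Q)=0$. The paper simply cites \cite[VIII.2.2]{gamelin} for $\gamma|_{\partial_i K}\approx l_{\partial_i K}$ (for compact $E\subset\mathbb R$ this is the classical identity $\gamma(E)=\tfrac14 l_{\mathbb R}(E)$); your justification via Tolsa's $\gamma\approx\gamma_+$ and a $T(b)$-type construction is correct but uses considerably heavier machinery than a direct citation of this classical fact requires. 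Your proposed ``more self-contained'' alternative is not actually self-contained in the relevant sense: splitting $\mu=h\,l_{\partial_i K}+\mu_s$, handling the a.c.\ part by Plemelj, and running the singular part through Lemma~\ref{lemmaBasic} and Lemma~\ref{CauchyTLemma}(1) still only delivers a $\gamma$-null exceptional set for $\mathcal C(g\mu_s)$ (Lemma~\ref{CauchyTLemma}(1) concludes $\gamma$-a.e.\ existence, duplicating Corollary~\ref{ZeroAC}), so you again need the capacity--length comparison on $\mathbb R$ to convert to $l_{\partial_i K}$-a.e.\ existence. In short, both of your routes reduce to the paper's argument; the first matches it and the second is a detour that doesn't avoid the crucial capacity-to-length step.
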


\begin{proof}
The proof follows from Corollary \ref{ZeroAC} and the fact that $\gamma |_{\partial_i K} \approx l_{\partial_i K}$ (see \cite[VIII.2.2]{gamelin}).	
\end{proof}

 \begin{definition}\label{FRSOBDef1}
 Define
  \[
  \ F_0(\{g_j\mu\}) = \bigcap_{j=1}^\infty \{z\in \mathcal Z(h),~ \mathcal C(g_j\mu)(z) = 0\}
  \]
  and 
  \[
  \ R_0(\{g_j\mu\}) = \bigcup_{j=1}^\infty \{z\in \mathcal Z(h),~ \mathcal C(g_j\mu)(z) \ne 0\}
  \]
  \end{definition}
  
  Clearly, from Lemma \ref{CTDefinedOnE}, we see that
  \begin{eqnarray}\label{FREq1}
  \begin{aligned}
  \ & l_{\partial_i K}(F_0(\{g_j\mu\})\cap R_0(\{g_j\mu\})) =0, \\
  \ & l_{\partial_i K}( \mathcal Z(h) \setminus (F_0(\{g_j\mu\})\cup R_0(\{g_j\mu\}))) = 0.
  \end{aligned}
  \end{eqnarray} 
  
  \begin{lemma}\label{NTLimit1}
  If $f\in R^t(K,\mu),$ then
  \[
  \ f_+(z) = f_-(z), ~ z \in R_0(\{g_j\mu\}),~ l_{\partial_i K}-a.a..
  \]
  \end{lemma}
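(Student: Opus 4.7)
The plan is to show that at $l_{\partial_i K}$-a.a.\ $z_0 \in R_0(\{g_j\mu\})$, the analytic continuation $\rho(f)$ of $f$ on $\Omega$ possesses a $\gamma$-limit at $z_0$; once this is in hand, the two applications of Lemma \ref{NTLimitLemma} (once with $U_n(z_0,\alpha)$, once with $L_n(z_0,\alpha)$, both of which lie in $\Omega$ by \eqref{ULConeDefEq3}) immediately give $f_+(z_0) = f_-(z_0)$ equal to this common $\gamma$-limit.

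First, fix a generic $z_0 \in R_0(\{g_j\mu\})$. By definition there is some index $j_0$ with $\mathcal C(g_{j_0}\mu)(z_0) \neq 0$, and since $f \in L^t(\mu)$, $g_{j_0}\in L^s(\mu)$, the measure $f g_{j_0} \mu$ lies in $M_0(\C)$. The identity \eqref{BasicEqn1} then yields
\[
 \ f(\lambda) \ =\ \dfrac{\mathcal C(f g_{j_0}\mu)(\lambda)}{\mathcal C(g_{j_0}\mu)(\lambda)}, \qquad \lambda \in \Omega, \ \gamma\text{-a.a.}
\]
So it suffices to show that the right-hand side has a $\gamma$-limit at $z_0$.

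Next, I would establish the density hypothesis of Lemma \ref{CauchyTLemma} for both $g_{j_0}\mu$ and $fg_{j_0}\mu$ at $z_0$. Writing $|\mu| = h\, l_{\partial_i K} + |\mu_s|$, the Lebesgue differentiation theorem on $\R$ handles the absolutely continuous part because $h(z_0)=0$ for $z_0 \in \mathcal Z(h)$, while Lemma \ref{lemmaBasic} handles the singular part since $\mu_s \perp l_{\partial_i K}$; both force $|\mu|(\D(z_0,\delta))/\delta \to 0$ at $l_{\partial_i K}$-a.a.\ $z_0 \in \mathcal Z(h)$. Combined with Lemma \ref{CTDefinedOnE} (which supplies the principal values of $\mathcal C(g_{j_0}\mu)(z_0)$ and $\mathcal C(fg_{j_0}\mu)(z_0)$ at $l_{\partial_i K}$-a.a.\ $z_0$), Lemma \ref{CauchyTLemma} gives that both Cauchy transforms are $\gamma$-continuous at $z_0$. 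Since $\mathcal C(g_{j_0}\mu)(z_0) \neq 0$, Proposition \ref{GLProp}(3) then produces a $\gamma$-limit $a := \mathcal C(f g_{j_0}\mu)(z_0)/\mathcal C(g_{j_0}\mu)(z_0)$ for the quotient, and the displayed identity (which only fails on a $\gamma$-null subset of $\Omega$, which can be absorbed into the exceptional set in Definition \ref{GLDef}) transfers this $\gamma$-limit to $f$ itself.

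The conclusion then follows from Lemma \ref{NTLimitLemma}: using $U_n(z_0,\alpha) \subset \D(z_0, 1/n)$, the $\gamma$-limit estimate yields $n\,\gamma(U_n(z_0,\alpha)\cap\{|f-a|>\epsilon\}) \to 0$, so applied with the upper cone it gives $f_+(z_0) = a$, and with the lower cone it gives $f_-(z_0) = a$. I expect the only genuine subtlety to be verifying that the $\gamma$-almost-everywhere identity from \eqref{BasicEqn1} survives the passage to a $\gamma$-limit at $z_0$; but this is built into Definition \ref{GLDef} because a $\gamma$-null exceptional set may freely be enlarged. The density calculation for $|\mu|$ on $\mathcal Z(h)$ is the other point worth careful attention, and is exactly where the splitting $\mu = h l_{\partial_i K} + \mu_s$ is being exploited.
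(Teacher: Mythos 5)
Your overall strategy is the same as the paper's: use \eqref{BasicEqn1} to write $f = \mathcal C(fg_{j_0}\mu)/\mathcal C(g_{j_0}\mu)$ on $\Omega$ up to a $\gamma$-null set, verify that both Cauchy transforms are $\gamma$-continuous at $z_0$ via Lemma \ref{CauchyTLemma}, pass to the quotient with Proposition \ref{GLProp}(3), and then apply Lemma \ref{NTLimitLemma} to both cones. That skeleton is right, and your remark that the $\gamma$-a.e.\ identity from \eqref{BasicEqn1} can be absorbed into the exceptional set of Definition \ref{GLDef} is also correct.

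However, there is a genuine gap at the density step. Lemma \ref{CauchyTLemma}(a) requires that the measure you apply it to --- here $g_{j_0}\mu$ and $fg_{j_0}\mu$ --- have vanishing upper density at $z_0$, i.e.\ $\overline{\lim}_{\delta\to 0}|g_{j_0}\mu|(\D(z_0,\delta))/\delta=0$. You instead establish $\overline{\lim}_{\delta\to 0}|\mu|(\D(z_0,\delta))/\delta=0$ by splitting $\mu = h\,l_{\partial_i K}+\mu_s$ and using the Lebesgue differentiation theorem for $h$. Since $g_{j_0}\in L^s(\mu)$ is not bounded in general, zero density of $\mu$ at $z_0$ does \emph{not} imply zero density of $g_{j_0}\mu$ (or of $fg_{j_0}\mu$) at $z_0$: the weight $|g_{j_0}|$ can blow up near $z_0$ exactly at the rate that kills the gain. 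The paper avoids this by noting that $g_{j_0}\mu = g_{j_0}h\,l_{\partial_i K}+g_{j_0}\mu_s$, the first summand being carried by $\mathcal N(h)$ and the second singular to $l_{\partial_i K}$, so that $l_{\partial_i K}|_{\mathcal Z(h)}\perp|g_{j_0}\mu|$, and then applies Lemma \ref{lemmaBasic} \emph{directly to $g_{j_0}\mu$ and $fg_{j_0}\mu$} with $E=\mathcal Z(h)$ to obtain $l_{\partial_i K}\bigl(\mathcal Z(h)\cap(\mathcal{ND}(g_{j_0}\mu)\cup\mathcal{ND}(fg_{j_0}\mu))\bigr)=0$. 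Alternatively, your decomposition can be salvaged by running the Lebesgue differentiation theorem on the integrable densities $g_{j_0}h$ and $fg_{j_0}h$ (which vanish on $\mathcal Z(h)$) rather than on $h$ alone, and applying Lemma \ref{lemmaBasic} to $g_{j_0}\mu_s$ and $fg_{j_0}\mu_s$; as written, though, your verification of hypothesis (a) is for the wrong measure.
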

  
  \begin{proof}
  Let $\lambda \in R_0(\{g_j\mu\}).$ We find $j_0$ such that $\lim_{\epsilon \rightarrow 0} \mathcal C_\epsilon (g_{j_0}\mu) (\lambda) = \mathcal C(g_{j_0}\mu) (\lambda)$ exists and $C(g_{j_0}\mu) (\lambda) \ne 0.$ Since $l_{\partial_i K}|_{\mathcal Z(h)}\perp |g_{j_0}\mu|,$ by Lemma \ref{lemmaBasic}, we see that 
  \[
  \ l_{\partial_i K}(\mathcal Z(h)\cap (\mathcal {ND}(g_{j_0}\mu) \cup \mathcal {ND}(fg_{j_0}\mu))) = 0.
  \] 
  	Hence, we may assume that $\lambda \in \mathcal {ZD}(g_{j_0}\mu) \cap \mathcal {ZD}(fg_{j_0}\mu).$ From Lemma \ref{CauchyTLemma}, we infer that $\mathcal C(g_{j_0}\mu)$ and  $\mathcal C(fg_{j_0}\mu)$ are $\gamma$-continuous at $\lambda.$ Since $\mathcal C(g_{j_0}\mu)(\lambda)\ne 0,$ by Corollary \ref{GCProp}, we see that $\frac{\mathcal C(fg_{j_0}\mu)(z)}{\mathcal C(g_{j_0}\mu)(z)}$ is $\gamma$-continuous at $\lambda.$ From \eqref{BasicEqn1} and Lemma \ref{NTLimitLemma}, we conclude that
  	\[
  	\ f_+(\lambda) = f_-(\lambda) = \frac{\mathcal C(fg_{j_0}\mu)(\lambda)}{\mathcal C(g_{j_0}\mu)(\lambda)}.
  	\] 
  	The lemma is proved. 
  \end{proof}

\begin{lemma}\label{GPTheorem}
Let $\nu\in M_0(\mathbb{C})$. Suppose that $\nu = h_0l_{\partial_i K} + \sigma$ is the Radon-Nikodym 
decomposition with respect to $l_{\partial_i K}$, where $h_0\in L^1(l_{\partial_i K})$ and $\sigma\perp l_{\partial_i K}$. Then there exists a subset $\mathcal Q\subset \mathbb C$ with $\gamma(\mathcal Q) = 0$, such that the following hold:

(a) $\mathcal C(\nu ) (\lambda) = \lim_{\epsilon\rightarrow 0} \mathcal C_{\epsilon}(\nu)(\lambda)$ exists for $\lambda\in \mathbb C\setminus \mathcal Q$,

(b) for $\lambda \in \partial_i K \setminus \mathcal Q$ and $\epsilon > 0$, $v^+(\nu)(\lambda):= \mathcal{C}(\nu )(\lambda ) +\pi i h_0(\lambda)$,
\begin{eqnarray}\label{GPTheoremEq1}
 \ \lim_{n \rightarrow \infty} n\gamma(U_n(\lambda, \alpha)\cap \{ |\mathcal{C}(\nu )(z ) - v^+(\nu)(\lambda)| > \epsilon \}) = 0, 
 \end{eqnarray}
and

(c) for $\lambda \in \partial_i K \setminus \mathcal Q$ and $\epsilon > 0$, $v^-(\nu)(\lambda):= \mathcal{C}(\nu )(\lambda ) -\pi i h_0(\lambda)$,
\begin{eqnarray}\label{GPTheoremEq2}
 \ \lim_{n \rightarrow \infty} n\gamma(L_n(\lambda, \alpha)\cap \{ |\mathcal{C}(\nu )(z ) - v^-(\nu)( \lambda)| > \epsilon \}) = 0. 
 \end{eqnarray}
\end{lemma}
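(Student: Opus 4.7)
The plan is to split $\mathcal C(\nu) = \mathcal C(h_0 l_{\partial_i K}) + \mathcal C(\sigma)$ along the Radon--Nikodym decomposition, handle the absolutely continuous piece via the classical Plemelj formula on the real line (Lemma \ref{PFLipschitz}) and the singular piece via the $\gamma$-continuity machinery (Lemmas \ref{lemmaBasic} and \ref{CauchyTLemma}), and then combine the two conclusions using semiadditivity of analytic capacity (Theorem \ref{TolsaTheorem}(2)). Part (a) is immediate from Corollary \ref{ZeroAC}: the principal value $\mathcal C(\nu)(\lambda)$ exists off a $\gamma$-null set, which we absorb into $\mathcal Q$.

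For the singular piece, since $\sigma \perp l_{\partial_i K}$, Lemma \ref{lemmaBasic} yields $l_{\partial_i K}(\mathcal{ND}(\sigma)) = 0$; because $\partial_i K \subset \mathbb R$, the measures $\gamma|_{\partial_i K}$ and $l_{\partial_i K}$ are comparable (as used in Lemma \ref{CTDefinedOnE}), so this $l_{\partial_i K}$-null set is itself $\gamma$-null and may be absorbed into $\mathcal Q$. Lemma \ref{CauchyTLemma} then shows $\mathcal C(\sigma)$ is $\gamma$-continuous at every remaining $\lambda$. Since $U_n(\lambda,\alpha) \cup L_n(\lambda,\alpha) \subset \mathbb D(\lambda,1/n)$, monotonicity of $\gamma$ with $\delta = 1/n$ immediately converts the disk-form $\gamma$-continuity into the cone-form statement
\[
\lim_{n \to \infty} n\,\gamma\bigl(U_n(\lambda,\alpha) \cap \{|\mathcal C(\sigma)(z) - \mathcal C(\sigma)(\lambda)| > \epsilon\}\bigr) = 0,
\]
and likewise with $L_n$. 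For the absolutely continuous piece, extend $h_0$ by zero off $\partial_i K$ so that $h_0 l_{\partial_i K} = h_0 l_{\mathbb R}$ with $h_0 \in L^1(l_{\mathbb R})$; Lemma \ref{PFLipschitz} then delivers, for $l_{\mathbb R}$-a.e.\ (and hence $l_{\partial_i K}$-a.e.) $\lambda$, the honest non-tangential limit
\[
\lim_{z \in U_n(\lambda,\alpha) \to \lambda} \mathcal C(h_0 l_{\partial_i K})(z) = \mathcal C(h_0 l_{\partial_i K})(\lambda) + \pi i h_0(\lambda),
\]
and the analogue along $L_n$ with $-\pi i h_0(\lambda)$. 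A classical non-tangential limit trivially implies the cone $\gamma$-limit: choose $n_\epsilon$ so that the two sides differ by at most $\epsilon$ on all of $U_{n_\epsilon}(\lambda,\alpha)$, and for $n \ge n_\epsilon$ the set measured in \eqref{GPTheoremEq1} for this piece is empty, hence of zero analytic capacity.

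To finish, observe that $\mathcal C(\nu)(z) - v^+(\nu)(\lambda)$ is the sum of $\mathcal C(h_0 l_{\partial_i K})(z) - (\mathcal C(h_0 l_{\partial_i K})(\lambda) + \pi i h_0(\lambda))$ and $\mathcal C(\sigma)(z) - \mathcal C(\sigma)(\lambda)$, each of whose cone $\gamma$-limit at $\lambda$ is zero by the previous paragraph. Applying the set inclusion from Proposition \ref{GLProp}(1), intersected with $U_n(\lambda,\alpha)$, together with semiadditivity of $\gamma$ yields \eqref{GPTheoremEq1}; \eqref{GPTheoremEq2} follows identically with $L_n$ and $v^-(\nu)$. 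The main obstacle is not analytic but careful book-keeping: producing one set $\mathcal Q$ with $\gamma(\mathcal Q) = 0$ that simultaneously covers the $\gamma$-null set where the principal value may fail, the $l_{\partial_i K}$-null set from Lemma \ref{lemmaBasic}, and the $l_{\mathbb R}$-null set from Plemelj's formula. The comparability $\gamma|_{\mathbb R} \approx l_{\mathbb R}$ is the key ingredient that keeps all these exceptional sets under one $\gamma$-null umbrella.
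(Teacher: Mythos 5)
Your proof is correct and follows essentially the same route as the paper: split $\mathcal C(\nu)$ along the Radon--Nikodym decomposition, handle $\mathcal C(\sigma)$ via Lemma \ref{CauchyTLemma} and $\mathcal C(h_0 l_{\partial_i K})$ via Lemma \ref{PFLipschitz}, and reassemble with the set-inclusion trick and semiadditivity (Theorem \ref{TolsaTheorem}(2)). You fill in details the paper leaves implicit---notably the appeal to Lemma \ref{lemmaBasic} to verify the linear-density hypothesis of Lemma \ref{CauchyTLemma} for $\sigma$, the use of $\gamma|_{\R}\approx l_{\R}$ to fold the $l_{\partial_i K}$-null exceptional sets into a single $\gamma$-null $\mathcal Q$, and the observation that a genuine nontangential limit trivially implies the cone $\gamma$-limit---but the underlying argument is the same.
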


\begin{proof}
(a) is trivial by Corollary \ref{ZeroAC}. For (b): Because
\[
\begin{aligned}
\ & \{ |\mathcal{C}(\nu )(z ) - v^+(\nu)(\lambda)| > \epsilon \} \\
\ \subset & \{ |\mathcal{C}(\sigma)(z ) - \mathcal{C}(\sigma)(\lambda)| > \frac{\epsilon}{2} \} \cup \{ |\mathcal{C}(h_0l_{\partial_i K} )(z ) - v^+(h_0l_{\partial_i K})(\lambda)| > \frac{\epsilon}{2} \},
\end{aligned}
\]
(b) follows from Lemma \ref{CauchyTLemma}, Lemma \ref{PFLipschitz} and Theorem \ref{TolsaTheorem} (2). The proof of (c) is the same.	
\end{proof}

\begin{definition}\label{FRSOBDef2}
 Define
  \[
  \ F_+(\{g_j\mu\}) = \bigcap_{j=1}^\infty \{z\in \mathcal N(h),~ v^+(g_j\mu)(z) = 0\},
  \]
  \[
  \ F_-(\{g_j\mu\}) = \bigcap_{j=1}^\infty \{z\in \mathcal N(h),~ v^-(g_j\mu)(z) = 0\},
  \]
  and 
  \[
  \begin{aligned}
  \ R_1(\{g_j\mu\}) =& \left (\bigcup_{j=1}^\infty \{z\in \mathcal N(h),~ v^+(g_j\mu)(z) \ne 0\} \right ) \\
  \ & \bigcap \left (\bigcup_{j=1}^\infty \{z\in \mathcal N(h),~ v^-(g_j\mu)(z) \ne 0\} \right ).
   \end{aligned}
  \]
  \end{definition}
  
  Clearly, from Lemma \ref{CTDefinedOnE}, we see that
  \begin{eqnarray}\label{FREq2}
  \begin{aligned}
  \ & l_{\partial_i K}((F_+(\{g_j\mu\}) \cup F_-(\{g_j\mu\}))\cap R_1(\{g_j\mu\})) =0, \\
  \ & l_{\partial_i K}( \mathcal N(h) \setminus (F_+(\{g_j\mu\}) \cup F_-(\{g_j\mu\}))\cup R_1(\{g_j\mu\})))) =0.
  \end{aligned}
  \end{eqnarray} 
  
  \begin{lemma}\label{NTLimit2}
  If $f\in R^t(K,\mu),$ then
  \[
  \ f(z) \mathcal C(g_j\mu)(z) = \mathcal C(fg_j\mu)(z), ~ z \in \mathcal N(h),~ l_{\partial_i K}-a.a..
  \]
  \end{lemma}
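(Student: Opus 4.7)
The plan is to extend the identity \eqref{BasicEqn1}, which is already known on $\Omega$, to points of $\mathcal{N}(h)\subset\partial_i K$ by approximating $f$ in $L^t(\mu)$ by rational functions and passing to the limit on both sides using Tolsa's weak-type bound. The setup depends on two separate facts: a pointwise identity for elements of $\text{Rat}(K)$ on all of $K$, and $l_{\partial_i K}$-a.e.\ control of the limit of Cauchy transforms.

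First, for any $r\in\text{Rat}(K)$, the function $w\mapsto (r(w)-r(z))/(w-z)$ is itself in $\text{Rat}(K)$ for every $z\in K$ (the singularity at $w=z$ is removable and the poles of $r$ lie off $K$). Since $g_j\in R^t(K,\mu)^\perp$, pairing $g_j$ against this rational function gives zero; splitting the integrand and comparing principal values yields
\[
r(z)\,\mathcal{C}(g_j\mu)(z)=\mathcal{C}(rg_j\mu)(z)
\]
at every $z\in K$ for which both principal-value Cauchy transforms exist. By Corollary \ref{ZeroAC} and the equivalence $\gamma|_{\partial_i K}\approx l_{\partial_i K}$ used in Lemma \ref{CTDefinedOnE}, this identity holds $l_{\partial_i K}$-a.e.\ on $\partial_i K$ for each $r\in\text{Rat}(K)$.

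Second, choose $r_n\in\text{Rat}(K)$ with $r_n\to f$ in $L^t(\mu)$. By H\"older's inequality, $\|r_ng_j-fg_j\|_{L^1(\mu)}\to 0$ for each $j$, so Theorem \ref{TolsaTheorem}(3) applied to $\nu=(r_n-f)g_j\mu$ gives, for every $a>0$,
\[
\gamma(\{|\mathcal{C}(r_ng_j\mu)-\mathcal{C}(fg_j\mu)|>a\})\le \frac{C_T}{a}\,\|(r_n-f)g_j\|_{L^1(\mu)}\;\longrightarrow\;0.
\]
Thus $\mathcal{C}(r_ng_j\mu)\to\mathcal{C}(fg_j\mu)$ in $\gamma$-capacity and, since $\gamma|_{\partial_i K}\approx l_{\partial_i K}$, also in $l_{\partial_i K}$-measure on $\partial_i K$. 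Simultaneously, $r_n\to f$ in $L^t(\mu)$ implies $r_n\to f$ in $l_{\partial_i K}$-measure on $\mathcal{N}(h)$, because $\mu\sim l_{\partial_i K}$ there. A diagonal extraction over $j$ produces a single subsequence $\{r_{n_k}\}$ along which $r_{n_k}(z)\to f(z)$ holds $l_{\partial_i K}$-a.e.\ on $\mathcal{N}(h)$ and $\mathcal{C}(r_{n_k}g_j\mu)(z)\to\mathcal{C}(fg_j\mu)(z)$ holds $l_{\partial_i K}$-a.e.\ on $\partial_i K$ for every $j$. Combining this with the finiteness of $\mathcal{C}(g_j\mu)$ from Lemma \ref{CTDefinedOnE} and passing to the limit in the rational identity at every good point yields the claim.

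The main obstacle is precisely the pointwise passage to the limit for the Cauchy transforms: $L^t$-convergence of $r_n$ gives no direct pointwise control of $\mathcal{C}(r_ng_j\mu)$ on the lower-dimensional set $\partial_i K$, which is why Tolsa's weak $(1,1)$ estimate is indispensable. Once convergence in $\gamma$-capacity is transferred to convergence in $l_{\partial_i K}$-measure via the equivalence $\gamma|_{\partial_i K}\approx l_{\partial_i K}$, the standard subsequence argument closes the gap.
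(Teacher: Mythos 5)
Your argument is correct and follows essentially the same route as the paper: both rest on the rational identity $r(z)\mathcal{C}(g_j\mu)(z)=\mathcal{C}(rg_j\mu)(z)$, Tolsa's weak $(1,1)$ bound applied to $\mathcal{C}_*((r_n-f)g_j\mu)$, and the equivalence $\gamma|_{\partial_i K}\approx l_{\partial_i K}$ to pass to an $l_{\partial_i K}$-a.e.\ convergent subsequence. The only cosmetic differences are that you make the rational identity explicit (the paper takes it for granted) and you phrase the subsequence extraction as convergence in $l_{\partial_i K}$-measure, whereas the paper runs a Borel--Cantelli argument with the semiadditivity of $\gamma$ to get a $\gamma$-a.e.\ convergent subsequence directly.
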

  
  \begin{proof}
  There exists a sequence $\{f_n\}\subset Rat(K)$ such that
  \[
  \ \|f_n - f\|_{L^t(\mu)}\rightarrow 0,~ f_n \rightarrow  f, ~\mu-a.a..
 \]
 
 Let $g\in \{g_j\}.$ From Corollary \ref{ZeroAC}, we let $\mathcal Q_1\subset \C$ with $\gamma(\mathcal Q_1) = 0$ such that the principal values of $\mathcal{C}(g\mu)(z ),$ $\mathcal{C}(f_ng\mu)(z )$ for $n\ge 1,$ and $\mathcal{C}(fg\mu)(z )$ exist for $z\in \C\setminus \mathcal Q_1$. Define
 \[
 \ A_{nm} = \left\{z\in \C\setminus \mathcal Q_1:~ |\mathcal{C}(f_ng\mu)(z ) - \mathcal{C}(fg\mu)(z )|\ge \dfrac{1}{m} \right\}.
 \]
Since
 \[
 \begin{aligned}
 \ |\mathcal{C}(f_ng\mu)(z ) - \mathcal{C}(fg\mu)(z )| & = \lim_{\epsilon \rightarrow 0} | \mathcal{C} _{\epsilon}(f_ng\mu)(z ) - \mathcal{C} _{\epsilon}(fg\mu)(z )|\\
& \le \mathcal{C} _{*}((f_n - f)g\mu)(z ),
 \end{aligned}
 \]
applying Theorem \ref{TolsaTheorem} (3), we get
 \[
 \ \gamma (A_{nm}) \le \gamma \left\{\mathcal{C} _{*}((f_n - f)g\mu)(z ) \ge \dfrac{1}{m} \right\} \le C_Tm \|g\|\|f_n-f\|_{L^t(\mu)}.  
 \]
Choose $n_m$ so that $\|f_{n_m}-f\|_{L^t(\mu)} \le \frac{1}{m\|g\|2^m}$ and we have
$\gamma (A_{n_mm}) \le \frac{C_T}{2^m}$.
Set $B_k = \cup_{m=k}^\infty A_{n_mm}$.
Applying Theorem \ref{TolsaTheorem} (2), we have
 \[
 \ \gamma (B_k\cup \mathcal Q_1) \le A_T\sum_{m=k}^\infty \gamma (A_{n_mm})\le \dfrac{2A_TC_T}{2^k}.
 \]
Set $\mathcal Q_2 = \cap_{k=1}^\infty B_k$. Clearly, $\gamma (\mathcal Q_2) = 0$. If $\mathcal Q=\mathcal Q_1\cup \mathcal Q_2$, then on $\C\setminus \mathcal Q$, $\mathcal C(f_{n_m}g\mu)(\lambda )$ converges to $\mathcal C(fg\mu)(\lambda )$.
  Therefore, 
   \[
  \ f_{n_m}(z)\mathcal C(g\mu)(z) = \mathcal C(f_{n_m}g\mu)(z) \rightarrow \mathcal C(fg\mu)(z), ~\gamma-a.a..
  \]
  The lemma follows since $\gamma |_{\partial_i K} \approx l_{\partial_i K}.$	
  \end{proof}

  \begin{lemma}\label{NTLimit3}
  	If $f\in R^t(K,\mu),$ then
  \[
  \ f_+(z) = f_-(z) = f(z), ~ z \in R_1(\{g_j\mu\}),~ l_{\partial_i K}-a.a..
  \]
  \end{lemma}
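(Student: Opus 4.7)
The plan is to transport the key algebraic identity $f(z)\mathcal C(g_j\mu)(z) = \mathcal C(fg_j\mu)(z)$ from Lemma \ref{NTLimit2} into a statement about the boundary values $v^\pm$, and then let the $\gamma$-limit calculus and Lemma \ref{NTLimitLemma} do the rest.

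First I would observe that if $g_j\mu = g_j h\, l_{\partial_i K} + g_j\mu_s$ is the Radon--Nikodym decomposition on $\mathbb R$, then so is $fg_j\mu = fg_j h\, l_{\partial_i K} + fg_j\mu_s$. Using Lemma \ref{NTLimit2} on $\mathcal N(h)$, $l_{\partial_i K}$-a.a., we have
\[
v^+(fg_j\mu)(z) = \mathcal C(fg_j\mu)(z) + \pi i\, f(z)g_j(z)h(z) = f(z)\bigl[\mathcal C(g_j\mu)(z) + \pi i\, g_j(z)h(z)\bigr] = f(z)v^+(g_j\mu)(z),
\]
and analogously $v^-(fg_j\mu)(z) = f(z)v^-(g_j\mu)(z)$ on $\mathcal N(h)$, $l_{\partial_i K}$-a.a.

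Next fix $\lambda\in R_1(\{g_j\mu\})$ for which the above identities hold and for which the cone-inclusion \eqref{ULConeDefEq3} is valid. By definition of $R_1$ there is an index $j_0$ with $v^+(g_{j_0}\mu)(\lambda)\ne 0$. Lemma \ref{GPTheorem}(b), applied to both $g_{j_0}\mu$ and $fg_{j_0}\mu$, says that $\mathcal C(g_{j_0}\mu)(z)$ and $\mathcal C(fg_{j_0}\mu)(z)$ have $\gamma$-limits $v^+(g_{j_0}\mu)(\lambda)$ and $v^+(fg_{j_0}\mu)(\lambda)$, respectively, when tested against the cone $U_n(\lambda,\alpha)$. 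Reading Proposition \ref{GLProp}(3) with the set $X$ taken to be the infinite upper cone at $\lambda$ (so $\D(\lambda,1/n)\cap X = U_n(\lambda,\alpha)$ and $\delta = 1/n$), and using that $v^+(g_{j_0}\mu)(\lambda)\neq 0$, I obtain
\[
\lim_{n\to\infty} n\,\gamma\!\left(U_n(\lambda,\alpha) \cap \left\{\left|\tfrac{\mathcal C(fg_{j_0}\mu)(z)}{\mathcal C(g_{j_0}\mu)(z)} - \tfrac{v^+(fg_{j_0}\mu)(\lambda)}{v^+(g_{j_0}\mu)(\lambda)}\right| > \epsilon\right\}\right) = 0.
\]
Since $U_n(\lambda,\alpha)\subset\Omega$ for $n\ge n_\lambda$, the identity \eqref{BasicEqn1} gives $\mathcal C(fg_{j_0}\mu)(z)/\mathcal C(g_{j_0}\mu)(z) = f(z)$ on $U_n(\lambda,\alpha)$ off a set of zero analytic capacity, while the identity from the first paragraph gives $v^+(fg_{j_0}\mu)(\lambda)/v^+(g_{j_0}\mu)(\lambda) = f(\lambda)$. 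Hence $f(z)$ has $\gamma$-limit $f(\lambda)$ along $U_n(\lambda,\alpha)$, and Lemma \ref{NTLimitLemma} (applicable because $f$ is analytic on $U_{n_\lambda}(\lambda,\alpha)\subset\Omega$) upgrades this to the non-tangential limit $f_+(\lambda) = f(\lambda)$. Picking another index $j_1$ with $v^-(g_{j_1}\mu)(\lambda)\ne 0$ and repeating the argument verbatim with $v^-$ and $L_n(\lambda,\alpha)$ yields $f_-(\lambda) = f(\lambda)$.

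The main obstacle I anticipate is bookkeeping the exceptional $l_{\partial_i K}$-null sets: the identity of Lemma \ref{NTLimit2} holds off a null set depending on $f$ and on each $g_j$, the $\gamma$-limit conclusions of Lemma \ref{GPTheorem} hold off another null set, and the cone containment \eqref{ULConeDefEq3} fails on yet another. Since all these exceptional sets are $l_{\partial_i K}$-null and we only need countably many indices $j_0,j_1,\dots$ (ranging over the dense sequence $\{g_j\}$), their union is still $l_{\partial_i K}$-null, and the conclusion holds for $l_{\partial_i K}$-a.e. $\lambda\in R_1(\{g_j\mu\})$.
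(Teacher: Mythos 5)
Your proof is correct and takes essentially the same route as the paper's: use Lemma \ref{NTLimit2} to obtain the boundary identity $v^\pm(fg_j\mu)=f\,v^\pm(g_j\mu)$, then combine Lemma \ref{GPTheorem}(b)--(c), Proposition \ref{GLProp}(3) applied with $X$ the cone at $\lambda$, and \eqref{BasicEqn1} to get the $\gamma$-limit of $f$ along $U_n(\lambda,\alpha)$ and $L_n(\lambda,\alpha)$, and finally invoke Lemma \ref{NTLimitLemma}. The only difference is cosmetic: you make explicit the Radon--Nikodym factorization $v^+(fg_j\mu)=f\,v^+(g_j\mu)$, which the paper leaves implicit when it asserts $f(z)=v^+(fg_{j_1}\mu)(z)/v^+(g_{j_1}\mu)(z)$.
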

  
  \begin{proof}
  Let $\lambda \in R_1(\{g_j\mu\}).$ We find $j_1$ and $j_2$ such that $v^+ (g_{j_1}\mu) (\lambda)$ and $v^- (g_{j_2}\mu) (\lambda)$ exist, $v^+ (g_{j_1}\mu) (\lambda) \ne 0,$ and $v^- (g_{j_2}\mu) (\lambda) \ne 0.$ From \eqref{GPTheoremEq1} and \eqref{GPTheoremEq2}, we assume that 
  \[
 \ \lim_{n \rightarrow \infty} n\gamma\left (U_n(\lambda, \alpha)\cap \{ |\mathcal{C}(g_{j_1}\mu )(z ) - v^+(g_{j_1}\mu)(\lambda)| > \epsilon \}\right ) = 0, 
 \]
 \[
 \ \lim_{n \rightarrow \infty} n\gamma\left (U_n(\lambda, \alpha)\cap \{ |\mathcal{C}(fg_{j_1}\mu )(z ) - v^+(fg_{j_1}\mu)(\lambda)| > \epsilon \}\right ) = 0 
 \]
 \[
 \ \lim_{n \rightarrow \infty}n\gamma\left (L_n(\lambda, \alpha)\cap \{ |\mathcal{C}(g_{j_2}\mu )(z ) - v^-(g_{j_2}\mu)(\lambda)| > \epsilon \}\right ) = 0, 
 \]
 and
 \[
 \ \lim_{n \rightarrow \infty}n\gamma\left (L_n(\lambda, \alpha)\cap \{ |\mathcal{C}(fg_{j_2}\mu )(z ) - v^-(fg_{j_2}\mu)(\lambda)| > \epsilon \}\right ) = 0. 
 \]
 Applying Proposition \ref{GLProp} (3) for $X = U_{n_\lambda}(\lambda, \alpha)$ for some $n_\lambda \ge 1,$ we conclude that
 \[
 \ \lim_{n \rightarrow \infty} n\gamma\left (U_n(\lambda, \alpha)\cap \left \{\left  |\dfrac{\mathcal{C}(fg_{j_1}\mu )(z )}{\mathcal{C}(g_{j_1}\mu )(z )} - \dfrac{v^+(fg_{j_1}\mu)(\lambda)}{v^+(g_{j_1}\mu)(\lambda)} \right | > \epsilon \right\}\right ) = 0. 
 \]
 Since by \eqref{BasicEqn1}, $f(z) = \frac{\mathcal{C}(fg_{j_1}\mu )(z )}{\mathcal{C}(g_{j_1}\mu )(z )}$ for $z\in U_{n_\lambda}(\lambda, \alpha),~\gamma-a.a.$ and by Lemma \ref{NTLimit2}, $f(z) =\frac{v^+(fg_{j_1}\mu)(z)}{v^+(g_{j_1}\mu)(z)},~ z\in \mathcal N(h)\cap\{z:~v^+(g_{j_1}\mu)(z) \ne 0\}, ~l_{\partial_i K}-a.a.,$ we get
 \[
 \ \lim_{n \rightarrow \infty} n\gamma\left (U_n(\lambda, \alpha)\cap \{ |f(z ) - f(\lambda) | > \epsilon \} \right ) = 0. 
 \]
 Thus, from Lemma \ref{NTLimitLemma}, we conclude that $f_+(\lambda) = f(\lambda).$ Similarly, $f_-(\lambda) = f(\lambda).$ The lemma is proved.
  \end{proof}
  
 \begin{proof} (Main Theorem (1))
 Let $E_0 = R_0(\{g_j\mu\})\cup R_1(\{g_j\mu\}).$ Then the proof follows from Lemma \ref{NTLimit1} and Lemma \ref{NTLimit3}.	
 \end{proof}

\section{\textbf{Building block functions and an integral estimate}}

We will continue to use the assumptions and notations of the last section. Set
\[
\ F(\{g_j\mu\}) = F_0(\{g_j\mu\})\cup F_+(\{g_j\mu\}) \cup F_-(\{g_j\mu\}).
\]
Clearly, 
\[
\ l_{\partial_i K}(E_0\cap F(\{g_j\mu\})) = 0,~ l_{\partial_i K}(\partial_i K\setminus (E_0\cup F(\{g_j\mu\}))= 0.
\] 
A measure $\eta \in M_0^+(\C)$ is $c$-linear growth if
\[
\ \eta(\D(\lambda, \delta)) \le c \delta,\text{ for }\lambda\in \C \text{ and }\delta > 0.
\]
The following lemma is critical for proving Main Theorem (2).

\begin{lemma} \label{BBFunctions} 
Let $E_1\subset F(\{g_j\mu\})$ be a compact subset with $\gamma(E_1) > 0$. Then 
there exist $f\in R^{t,\i}(K,\mu),$ $\eta\in M_0^+(E_1),$ and $\|\mathcal C_\epsilon (\eta) \| \le C_3$ such that $f(z) = \mathcal C(\eta)(z)$ for $z\in E_1^c$, 
 \[
 \ \|f\|_{L^\infty(\mu)} \le C_3,~ f(\infty) = 0,~ f'(\infty) = \gamma(E_1),
 \]
and $\rho(f) = \mathcal C(\eta)|_\Omega \in H^\i (\Omega, E_0).$ 
\end{lemma}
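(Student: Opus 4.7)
The plan is to produce $\eta$ by applying Tolsa's theorem (Theorem \ref{TolsaTheorem}(1)) to $E_1$: there is $\eta_0 \in M_0^+(E_1)$ with $\|\mathcal C(\eta_0)\|_\infty \le 1$ and $\|\eta_0\| \ge \gamma(E_1)/A_T$. Rescaling gives $\eta = (\gamma(E_1)/\|\eta_0\|)\eta_0$ with $\|\eta\| = \gamma(E_1)$ and $\|\mathcal C(\eta)\|_\infty \le A_T$. The $L^\infty$ bound forces $\eta$ to have linear growth, and a Cotlar-type argument combined with Theorem \ref{TolsaTheorem}(3) produces the uniform bound $\sup_\epsilon \|\mathcal C_\epsilon(\eta)\|_\infty \le C_3$.

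The main step is to realize $\mathcal C(\eta)$ as an element of $R^{t,\infty}(K,\mu)$. For each $\delta>0$, I would partition $\text{spt}(\eta)$ into Borel pieces $\{A_k\}$ of diameter less than $\delta$ and, for each $A_k$, select $z_k$ inside some bead $\D(\lambda_n,r_n)$ within distance $C\delta$ of $A_k$; such a bead exists because $K\cap\R$ has empty $\R$-interior, so the beads accumulate at every point of $\partial_i K$. Setting $\eta_\delta = \sum_k \eta(A_k)\delta_{z_k}$, the Cauchy transform $\mathcal C(\eta_\delta)$ is a rational function with all poles in $\mathbb C\setminus K$, hence lies in $\mathrm{Rat}(K)\subset R^{t,\infty}(K,\mu)$. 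A Vitushkin-type localization (see \cite[Chapter 4]{Tol14}) using the linear growth of $\eta$ gives a uniform bound $\|\mathcal C(\eta_\delta)\|_{L^\infty(\mu)}\le C_3$ independent of $\delta$. Banach-Alaoglu then extracts a weak-$*$ limit $f\in L^\infty(\mu)$, and since $R^{t,\infty}(K,\mu)$ is weak-$*$ closed (being the intersection of the norm-closed subspace $R^t(K,\mu)$ with $L^\infty(\mu)$), $f\in R^{t,\infty}(K,\mu)$. For $z\in E_1^c$, weak convergence $\eta_\delta\to\eta$ combined with smoothness of $w\mapsto 1/(w-z)$ away from $z$ identifies $f(z)=\mathcal C(\eta)(z)$ off $E_1$, and the relations $f(\infty)=0$ and $|f'(\infty)|=\|\eta\|=\gamma(E_1)$ follow from the expansion of $\mathcal C(\eta)$ at infinity.

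Finally, since $\Omega\subset E_1^c$, the evaluation map yields $\rho(f)|_\Omega=\mathcal C(\eta)|_\Omega$. By Lemma \ref{PFLipschitz}, the upper and lower non-tangential limits of $\mathcal C(\eta)$ at $\lambda\in\partial_i K$ differ by $2\pi i$ times the density of $\eta$ with respect to $l_\R$. Since $\text{spt}(\eta)\subset E_1\subset F(\{g_j\mu\})$ and $l_{\partial_i K}(E_0\cap F(\{g_j\mu\}))=0$ by the construction of $E_0$, this density vanishes $l_{E_0}$-a.e., so $f_+=f_-$ on $E_0$, placing $\rho(f)$ in $H^\infty(\Omega,E_0)$.

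The main obstacle is the uniform $L^\infty(\mu)$-bound on $\mathcal C(\eta_\delta)$: naive pointwise error estimates $|\mathcal C(\eta_\delta)(z)-\mathcal C(\eta)(z)|$ blow up at points of $\text{spt}(\mu)$ near the inserted poles $z_k$, so one must use a Vitushkin-style localization tuned to the bead geometry, relying on the linear growth of $\eta$ to sum the local errors without loss. A secondary technical point is the justification of the Fubini step implicit in identifying the weak-$*$ limit off $E_1$, which follows from Corollary \ref{ZeroAC} and Theorem \ref{TolsaTheorem}(3).
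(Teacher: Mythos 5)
Your discretization scheme has a genuine gap, and it is a structural one, not a technical one that more careful estimates could close. The crux is the claimed uniform bound $\|\mathcal C(\eta_\delta)\|_{L^\infty(\mu)}\le C_3$. If $z_k$ sits in a bead $\D(\lambda_n,r_n)$ with $r_n\ll\delta$ (and nothing in the string-of-beads construction prevents arbitrarily small beads arbitrarily close to any point of $\partial_i K$), then for $z$ on $\partial\D(\lambda_n,r_n)\subset K$ one has $|\mathcal C(\eta_\delta)(z)|\gtrsim \eta(A_k)/r_n$, which can be $\sim\delta/r_n\to\infty$. Spreading $\eta(A_k)$ as $\tfrac{c}{2\pi i}\,dz$ over $\partial\D(\lambda_n,r_n)$ gives the rational function $c/(z-\lambda_n)$ with the same size $\sim\eta(A_k)/r_n$ on $K$, so the problem persists. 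The linear growth of $\eta$ controls the far field but not this near-field blow-up, and no Vitushkin localization fixes it because the statement being aimed at is simply false: if your scheme worked it would place $\mathcal C(\eta)$ in $R^{t,\infty}(K,\mu)$ for \emph{every} $\eta\in M_0^+(\partial_i K)$ with bounded Cauchy transform, never using the hypothesis $E_1\subset F(\{g_j\mu\})$. Taking $E_1\subset E_0$ would then contradict Main Theorem (1), exactly as in the argument surrounding Proposition \ref{SOBRemovable}.

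The paper avoids this by never trying to make $\mathcal C(\eta)$ the limit of rational functions directly. Instead it runs a duality argument against the annihilators $g_j$. First it shrinks $E_1$ to one of $E_1\cap F_0$, $E_1\cap F_+$, $E_1\cap F_-$ and further intersects with the sets $A_j=\{\mathcal C_*(g_j\mu)\le M_j\}$ to obtain $E_2$ with comparable analytic capacity and uniform maximal-function control. Then, with $\eta_0$ the Tolsa measure on $E_2$, it takes a weak-star limit $f_0$ of the truncated transforms $\mathcal C_{\epsilon_k}(\eta_0)$ and, via the Fubini identity $\int \mathcal C_{\epsilon_k}(\eta_0)g_j\,d\mu=-\int\mathcal C_{\epsilon_k}(g_j\mu)\,d\eta_0$ together with dominated convergence (this is where the $M_j$ bound is essential), reduces membership in $R^{t,\infty}(K,\mu)$ to the vanishing of $\int\mathcal C(g_j\mu)\,d\eta_0$. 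That vanishing is exactly what the definition of $F_0$ gives in Case I; for $F_\pm$ one must correct $f_0$ by $\mp\pi i\,d\eta_0/dl_{\partial_i K}$ using Plemelj's formula. So the selection of $E_1\subset F(\{g_j\mu\})$ is not decorative: it is the only thing that makes $f_0$ (or its corrected version) an element of $R^{t,\infty}(K,\mu)$, and any proof of the lemma must exploit it. Your final paragraph (identifying $\rho(f)$ and showing $f_+=f_-$ on $E_0$) is fine, but it is downstream of the step that fails.
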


\begin{proof}
From Theorem \ref{TolsaTheorem} (2), we have
\[
\ \gamma(E_1) \le A_T(\gamma(E_1 \cap F_0(\{g_j\mu\})) + \gamma(E_1 \cap F_+(\{g_j\mu\})) + \gamma(E_1 \cap F_-(\{g_j\mu\}))).
\]
Let $E_1'$ denote one of the sets $E_1 \cap F_0(\{g_j\mu\}),$ $E_1 \cap F_+(\{g_j\mu\}),$ or $E_1 \cap F_-(\{g_j\mu\})$ that has the biggest analytic capacity. Then $\gamma(E_1') \ge \frac{1}{3A_T} \gamma(E_1).$	

Let $A_j = \{\mathcal C_*(g_j\mu)(z) \le M_j\}$ for some constant $M_j > 0.$ By Theorem \ref{TolsaTheorem} (3), we can select $M_j$ so that $\gamma(A_j^c) < \frac{\gamma(E_1)}{2^j(6A_T^3)}$. Set $E_2 = \cap_{j=1}^\infty A_j \cap E_1'$. Then applying Theorem \ref{TolsaTheorem} (2), we get
\[
 \ \gamma (E_2) \ge \dfrac{1}{A_T}\gamma (E_1') - \gamma(\cup_{j=1}^\infty A_j^c) \ge \dfrac{\gamma(E_1)}{3A_T^2} - A_T\sum_{j=1}^\infty \gamma(A_j^c) \ge \dfrac{\gamma(E_1)}{6A_T^2}
 \] 
and 
\begin{eqnarray}\label{BBFunctionsEq1}
\ \mathcal C_*(g_j\mu)(z) \le M_j,\text{ for } z\in E_2\text{ and }j \ge 1.
\end{eqnarray}

From Theorem \ref{TolsaTheorem} (1) and \cite[Theorem 4.14]{Tol14}, there exists $\eta_0\in M_0^+(E_2)$ such that $c_1 \gamma (E_2) \le \|\eta_0\|,$ $\eta_0$ is $1$-linear growth, and $\CT_* (\eta_0) \le 1.$ Clearly, $\eta_0$ is absolutely continuous with respect to $l_{E_2}.$ The Cauchy transform of $l_{E_2}$ operator is $L^2(l_{E_2})$ bounded (see \cite{cmm82} or \cite[Theorem 3.11]{Tol14}). Hence, from \cite{tol98} (or \cite[Theorem 8.1]{Tol14}), we have, for $j \ge 1,$
\begin{eqnarray}\label{BBFunctionsEq2}
\ \lim_{\epsilon\rightarrow 0} \mathcal C_\epsilon (g_j\mu)(z) = \mathcal C (g_j\mu)(z), ~ \eta_0-a.a..
\end{eqnarray} 

There exists a subsequence $f_k(z) = \mathcal C_{\epsilon_k}(\eta_0)(z)$ such that $f_k$ converges to $f_0\in L^\infty(\mu)$ in weak-star topology, $f_k(\lambda) $ converges to $f_0(\lambda) = \mathcal C(\eta_0)(\lambda)$ uniformly on any compact subset of $(\text{spt}(\eta_0))^c$ as $\epsilon_k\rightarrow 0.$ Since
\begin{eqnarray}\label{BBFunctionsEq3}
 \ \int \mathcal C_{\epsilon_k}(\eta_0)(z) g_j(z)d\mu (z) = - \int \mathcal C_{\epsilon_k}(g_j\mu) (z) d\eta_0 (z),
 \end{eqnarray}
 Using \eqref{BBFunctionsEq1}, \eqref{BBFunctionsEq2}, and the Lebesgue dominated convergence theorem, we get
 \begin{eqnarray}\label{BBFunctionsEq4}
 \ \int f_0(z) g_j(z)d\mu (z) = - \int \mathcal C(g_j\mu) (z) d\eta_0 (z).
 \end{eqnarray}
Let $\lambda\notin E_2$ and $d = \text{dist}(\lambda, E_2)$,
for $z\in \D(\lambda, \frac{d}{2})$ and $\epsilon < \frac{d}{2}$, we have
 \[
 \ \left |\dfrac{\mathcal C_\epsilon (\eta_0)(z) - f_0(\lambda)}{z - \lambda} \right |\le \left |\mathcal C_\epsilon  \left (\dfrac{\eta_0 (s)}{s - \lambda} \right ) (z) \right | \le \dfrac{2}{d^2} \|\eta_0\|. 
 \]
For $z\notin \D(\lambda, \frac{d}{2})$ and $\epsilon < \frac{d}{2}$,
 \[
 \ \left |\dfrac{\mathcal C_\epsilon (\eta_0)(z) - f_0(\lambda)}{z - \lambda} \right | \le \dfrac{4}{d}.
 \]
Thus, we can replace the above proof for the measure $\dfrac{\eta_0 (s)}{s - \lambda}$. In fact, we can choose a subsequence  $\{\mathcal C_{\epsilon_{k_m}} (\eta_0)\}$ such that $e_{k_m}(z) = \dfrac{\mathcal C_{\epsilon_{k_m}} (\eta_0)(z) - f_0(\lambda)}{z - \lambda}$ converges to $e(z)$ in weak-star topology. Clearly, $(z-\lambda)e_{k_m}(z)  + f_0(\lambda) = \mathcal C_{\epsilon_{k_m}} (\eta_0)(z)$ converges to $(z-\lambda)e(z)  + f_0(\lambda) = f_0(z)$ in weak-star topology.  
On the other hand, the equation \ref{BBFunctionsEq3} becomes
 \begin{eqnarray}\label{BBFunctionsEq5}
 \  \int \mathcal C_{\epsilon_{k_m}}(\dfrac{\eta_0(s)}{s-\lambda})(z) g_jd\mu  = - \int \mathcal C_{\epsilon_{k_m}}(g_j\mu)(z) \dfrac{d\eta_0(z)}{z-\lambda}
 \end{eqnarray}
and for $\epsilon_{k_m} < \frac{d}{2}$, we have
 \begin{eqnarray}\label{BBFunctionsEq6}
 \ \begin{aligned}
\ & \left | \mathcal C_{\epsilon_{k_m}}(\dfrac{\eta_0 (s)}{s-\lambda})(z) - e_{k_m}(z) \right |\\
\ \le &\begin{cases}0, & z\in \D(\lambda, \frac{d}{2}), \\ \dfrac{2}{d^2}\eta_0 (\D(z, \epsilon_{k_m})), & z\notin \D(\lambda, \frac{d}{2})\end{cases},
\ \end{aligned}
 \end{eqnarray}
which goes to zero as $\epsilon_{k_j} \rightarrow 0$. Combining \eqref{BBFunctionsEq1}, \eqref{BBFunctionsEq2}, \eqref{BBFunctionsEq5}, \eqref{BBFunctionsEq6}, and the Lebesgue dominated convergence theorem, 
 \begin{eqnarray}\label{BBFunctionsEq7}
 \ \int \dfrac{f_0(z) - f_0(\lambda)}{z - \lambda} g_j(z)d\mu (z) = - \int \mathcal C(g_j\mu) (z) \dfrac{d\eta_0 (z)}{z - \lambda}.
 \end{eqnarray}
 Using \cite{tol98} (or \cite[Theorem 8.1]{Tol14}), we see that
 \[
 \ f_0(z) = \mathcal C(\eta_0)(z), ~ \mu |_{\mathcal N(h)}-a.a..
 \] 
 
 Case I: $E_2 \subset E_1 \cap F_0(\{g_j\mu\}),$ then from \eqref{BBFunctionsEq4} and \eqref{BBFunctionsEq7}, we see that, since $\{g_j\}$ is dense in $R^t(K,\mu)^\perp,$ $f_0\in R^{t,\i}(K,\mu)$ and $\frac{f_0(z) - f_0(\lambda)}{z - \lambda}\in R^{t,\i}(K,\mu),$ which implies $\rho(f_0) = f_0|_\Omega \in H^\i (\Omega, E_0).$ Set 
 \[
 \ f = \dfrac{f_0}{\|\eta_0\|} \gamma (E_1) \text{ and } \eta  = \dfrac{\eta_0}{\|\eta_0\|} \gamma (E_1). 
 \]
 
 Case II: $E_2 \subset E_1 \cap F_+(\{g_j\mu\}),$ then from \eqref {BBFunctionsEq4} , we get, since $v^+(g_j\mu)(z) = 0, ~z \in E_2,~ l_{\partial_i K}-a.a.,$
 \[
 \ \int f_0(z) g_j(z)d\mu (z) = - \int \mathcal C(g_j\mu) (z) d\eta_0 (z) = \pi i \int h(z)g_j(z) d\eta_0(z).
 \]
 The function $f_0 - \pi i \frac{d\eta_0}{dl_{\partial_i K}}$ is the non-tangential limit of $f_0$ from the bottom on $E_2$ (Lemma \ref{PFLipschitz}). Thus, 
 $f_0 - \pi i \frac{d\eta_0}{dl_{\partial_i K}}\in L^\infty(\mu).$ Hence, 
 \[
 \ \int (f_0(z) - \pi i \dfrac{d\eta_0}{dl_{\partial_i K}}(z)) g_j(z)d\mu (z) = 0,
 \]
 which implies $f_0 - \pi i \frac{d\eta_0}{dl_{\partial_i K}}\in R^{t,\i}(K,\mu).$
 
 Using a similar argument  with
 \eqref{BBFunctionsEq7} instead of  \eqref{BBFunctionsEq4},
 we see that
 \[
 \ \int \dfrac{f_0(z)  - \pi i \frac{d\eta_0}{dl_{\partial_i K}}- f_0(\lambda)}{z - \lambda} g_j(z)d\mu (z) = 0.
 \]
  Therefore,  $\rho(f_0 - \pi i \frac{d\eta_0}{dl_{\partial_i K}}) = f_0|_\Omega \in H^\i (\Omega, E_0).$
 \[
 \ f = \dfrac{f_0 - \pi i \frac{d\eta_0}{dl_{\partial_i K}}}{\|\eta_0\|} \gamma (E_1) \text{ and } \eta  = \dfrac{\eta_0}{\|\eta_0\|} \gamma (E_1). 
 \]
 
 Case III: Similarly  to Case II, we set 
 \[
 \ f = \dfrac{f_0 + \pi i \frac{d\eta_0}{dl_{\partial_i K}}}{\|\eta_0\|} \gamma (E_1) \text{ and } \eta  = \dfrac{\eta_0}{\|\eta_0\|} \gamma (E_1). 
 \] 
 It is easy to verify that $f$ and $\eta$ satisfy the properties of the lemma.     
\end{proof}

Define 
\begin{eqnarray}\label{NRBDef}
\ \mathcal F_1 = F(\{g_j\mu\}) \cup \T \cup \bigcup_{n=1}^\infty \partial \D(\lambda_n, r_n).
\end{eqnarray}
Clearly, $\mathcal F_1 = \partial\Omega \setminus E_0,~ \gamma-a.a..$The set $\mathcal F := \ \mathcal F_1 \cup K^c$ is called the non-removable boundary for $\rtkmu.$

In the remaining section, we prove the following integral estimate. 

\begin{lemma}\label{IEstSOB}
Let $S$ be a closed square whose sides are parallel to $x$-axis or $y$-axis and let the length of $S$ is $\delta.$
Let $\varphi$ be a smooth function with support in $S,$ $\|\varphi\|_\i \le 1,$ and $\left \|\frac{\partial \varphi (z)}{\partial  \bar z} \right \|_\infty \le \frac{C_4}{\delta}.$ Then for $f\in H^\infty (\Omega, E_0)$ with $\|f\|_\Omega \le 1$ and $f(z) = 0, ~ z\in \C \setminus K,$ and $n \ge 1,$
 \begin{eqnarray}\label{IEstSOBEq}
 \begin{aligned}
 \ &\left | \int f(z) (z - \lambda)^{n-1}\dfrac{\partial \varphi (z)}{\partial  \bar z} d\area(z) \right | \\
 \ \le &C_4\delta^{n-1} l_{\partial \Omega} (S \cap \mathcal F_1).
 \end{aligned}
 \end{eqnarray}	
\end{lemma}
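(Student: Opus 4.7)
The plan is to apply Green's formula (Stokes' theorem for $\bar\partial$) separately on the two Jordan domains $\Omega_U$ and $\Omega_L$. Since $(z-\lambda)^{n-1}f(z)$ is holomorphic on each piece, one has
\[
\ \dfrac{\partial}{\partial\bar z}\bigl[(z-\lambda)^{n-1}f(z)\varphi(z)\bigr] = (z-\lambda)^{n-1}f(z)\,\dfrac{\partial\varphi}{\partial\bar z}
\]
on $\Omega_U$ and on $\Omega_L$, which converts the area integral into a boundary integral on which the removability hypothesis $f_+=f_-$ on $E_0$ will cancel exactly the piece lying in $E_0$. First I would check the geometry: $\Omega_U$ and $\Omega_L$ are simply connected Jordan domains whose boundaries lie in $\T \cup \bigcup_n \partial\D(\lambda_n,r_n) \cup \partial_i K$ and have finite total length (bounded by $2\pi + 2\pi\sum_n r_n + 2 \le 4\pi+2$ since $\sum r_n<1$). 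By Carath\'eodory and Fatou, applied via the Riemann map from $\D$, any $f\in H^\infty(\Omega,E_0)$ admits non-tangential boundary values $l_{\partial\Omega_U}$- and $l_{\partial\Omega_L}$-a.a.; on $\partial_i K$ these coincide with $f_+$ and $f_-$ from \eqref{UNTLimitEq}--\eqref{LNTLimitEq}.

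The main step is to invoke the extension of Green's formula valid for bounded analytic functions on rectifiable Jordan domains paired with a $C^1$ test function,
\[
\ \int_{\Omega_U} f(z)(z-\lambda)^{n-1}\dfrac{\partial\varphi}{\partial\bar z}\,d\area = \dfrac{1}{2i}\int_{\partial\Omega_U} f^*(z)(z-\lambda)^{n-1}\varphi(z)\,dz,
\]
positively oriented, and analogously on $\Omega_L$. Adding the two identities and using $f\equiv 0$ outside $K$ reconstructs the left-hand side of \eqref{IEstSOBEq}. I would then split $\partial\Omega_U$ and $\partial\Omega_L$ each into its outer piece (arcs of $\T$ and of the bead circles $\partial\D(\lambda_n,r_n)$, which lie in $\mathcal F_1$ by \eqref{NRBDef}) and its inner piece in $\partial_i K$. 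On $\partial_i K$ the two positive orientations are opposite, so the two contributions combine into $\dfrac{1}{2i}\int_{\partial_i K\cap S}(f_-(z)-f_+(z))(z-\lambda)^{n-1}\varphi(z)\,dx$. Because $f\in H^\infty(\Omega,E_0)$ forces $f_+=f_-$ $l_{E_0}$-a.a., only $(\partial_i K\setminus E_0)\cap S$ survives, and up to an $l_{\partial_i K}$-null set this set is contained in $F(\{g_j\mu\})\cap S\subset \mathcal F_1\cap S$.

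To finish, estimate each surviving boundary integral by $\|f^*\|_\infty\|\varphi\|_\infty\sup_{z\in S}|z-\lambda|^{n-1}$ times the arc-length measure of the relevant portion of $\mathcal F_1\cap S$. Using $|f^*|,|f_\pm|\le 1$, $|\varphi|\le 1$, and $|z-\lambda|\le \sqrt 2\,\delta$ on $S$ (the intended usage places $\lambda$ in or near $S$), summing the pieces yields the bound $C_4\,\delta^{n-1}\,l_{\partial\Omega}(S\cap \mathcal F_1)$. The main obstacle is the justification of Green's formula for a merely bounded analytic $f$ on a rectifiable Jordan domain: since $f$ need not extend continuously to the boundary, one conformally pulls back to $\D$ (Carath\'eodory extends the Riemann map homeomorphically to the closure), applies Stokes on the inner level sets $\{|w|<r\}$ for $r\uparrow 1$, and passes to the limit using Fatou's radial-limit theorem together with dominated convergence (the derivative of the Riemann map lies in $H^1$ since the boundary is rectifiable). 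No ingredient beyond classical $H^p$-theory on Jordan domains is required, and the hypothesis $\|\partial\varphi/\partial\bar z\|_\infty\le C_4/\delta$ plays no role in this argument beyond entering the name of the final constant.
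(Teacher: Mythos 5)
Your argument is correct and gives a genuinely different route to the estimate. The paper instead applies Lemma \ref{BAForH1Finite} (Tolsa's Cauchy-transform representation) to write $f = \mathcal C(b\,l_{\partial\Omega})$ with $|b|\le 1$, then uses the Plemelj-type Lemma \ref{GPTheorem} to deduce $b=0$ $l_{E_0}$-a.a.\ (because $f_+-f_-=2\pi i b$), and finally reads the distributional identity $\bar\partial f = -\pi b\, l_{\partial\Omega}$ to convert the area integral into $\pi\int_{\partial\Omega\setminus E_0}(z-\lambda)^{n-1}\varphi\, b\, dl_{\partial\Omega}$, which is bounded by $C\,\delta^{n-1}l_{\partial\Omega}(S\cap\mathcal F_1)$. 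Your proof avoids both of those ingredients and instead runs Green's formula separately on the two rectifiable Jordan domains $\Omega_U,\Omega_L$, justified by conformal pullback to $\D$, Smirnov theory ($\psi'\in H^1$ since the boundaries are rectifiable), Fatou and dominated convergence; the $E_0$-cancellation then arises from the orientation reversal on $\partial_i K$ combined with $f_+=f_-$ there. What the paper's route buys is a single, global boundary representation that sidesteps any decomposition of $\Omega$ and any limiting-contour argument — the jump is already encoded in $b$ by Lemma \ref{GPTheorem}, which is developed anyway for Main Theorem (1). What your route buys is self-containment in classical $H^p$/Smirnov theory on Jordan domains, with no appeal to analytic-capacity machinery in this lemma.

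Two small points worth flagging. First, when you add the two Green identities, $\partial_i K$ is traversed once from $\partial\Omega_U$ and once from $\partial\Omega_L$, so the resulting estimate is in terms of $l_{\partial\Omega_U}(S\cap\cdots)+l_{\partial\Omega_L}(S\cap\cdots)$, which double-counts the inner boundary relative to $l_{\partial\Omega}$; this only costs a factor of $2$, absorbed in $C_4$, but it should be said. Second, you should record that $\Omega_U$ and $\Omega_L$ are indeed Jordan domains with rectifiable boundary (of length at most $\pi+2+\pi\sum r_n$) — the paper uses this fact in the proof of Proposition \ref{REEForSOB}, but your argument depends on it essentially, whereas the paper's proof of this lemma only needs $\partial\Omega$ to be a rectifiable set. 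You are also right that the hypothesis $\|\bar\partial\varphi\|_\infty\le C_4/\delta$ is never used in the estimate itself (in either proof); it enters only elsewhere in the Vitushkin scheme. Finally, as you noted and as the paper also implicitly assumes, the bound $|z-\lambda|^{n-1}\le C\delta^{n-1}$ on $S$ requires $\lambda$ to be at bounded distance from $S$ relative to $\delta$, which matches the intended use ($\lambda=c_S$) in Lemmas \ref{CaseAB} and \ref{CaseC}.
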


\begin{proof}
For $f\in H^\infty (\Omega, E_0)$ with $f(z) = 0,~ z\in \C \setminus K$ and $\|f\|_\Omega \le 1,$ applying Lemma \ref{BAForH1Finite} for $\Gamma=\partial \Omega,$ we find a function $b(z) \in L^\i(l_{\partial \Omega})$ with $\|b\| \le 1$ such that $f(z) = \CT(bl_{\partial \Omega})(z)$ for $z\in \C\setminus \partial \Omega.$ From Lemma \ref{GPTheorem}, we have
\[
\ f_+(z) = v^+(bl_{\partial \Omega})(z) = \CT(bl_{\partial \Omega})(z) + \pi i b(z),~ l_{\partial _i K}-a.a.
\]
and
\[
\ f_-(z) = v^-(bl_{\partial \Omega})(z) = \CT(bl_{\partial \Omega})(z) - \pi i b(z),~ l_{\partial _i K}-a.a..
\]
Therefore, $b(z) = 0,~ l_{E_0}-a.a.$ since $f_+(z) = f_-(z),~ l_{E_0}-a.a..$ For a smooth function $\varphi$ with support in $S,$ $\|\varphi\|_\i \le 1,$ and $\left \|\frac{\partial \varphi (z)}{\partial  \bar z} \right \|_\infty \le \frac{C_4}{\delta}.$ we get
 \[
 \begin{aligned}
 \ &\left | \int f(z) (z - \lambda)^{n-1} \dfrac{\partial \varphi (z)}{\partial  \bar z} d\area(z) \right | \\
 \ \le & \pi \left | \int_{\partial \Omega \setminus E_0} (z - \lambda)^{n-1} b(z) \varphi (z) dl_{\partial \Omega} (z) \right | \\
 \ \le & C_5\delta^{n-1} l_{\partial \Omega} (S \cap (\partial \Omega \setminus E_0)).
 \end{aligned}
 \]
 This completes the proof.
\end{proof}

\section{\textbf{Proofs of Main Theorem (2) and (3)}}

We continue to use the assumptions and notation of sections 5 and 6. 
If  a compact subset $E$ is contained in the disk $\D(a, \delta)$ and $f$ is bounded analytic on $\C_\i \setminus E$ with $f(\infty) = 0.$ We consider the Laurent expansion of $f$ centered at $a$ for $z\in \mathbb C\setminus \D(a, \delta),$

\begin{eqnarray}\label{LaurentExpansion}
 \ f(z) = \sum_{n=1}^\infty \dfrac{c_n(f,a)}{(z-a)^n}.
 \end{eqnarray}
 As $c_1(f, a)$ does not depend on the choice of $a$,  we define 
 $c_1 (f) = c_1(f, a).$ The coefficient $c_2 (f,a) = c_2(f, a)$ does  depend on $a$.
 
 Let $\varphi$ be a smooth function with compact support. Vitushkin's localization operator
$T_\varphi$ is defined by
 \[
 \ (T_\varphi f)(\lambda) = \dfrac{1}{\pi}\int \dfrac{f(z) - f(\lambda)}{z - \lambda} \bar\partial \varphi (z) d\area(z),
 \]
where $f\in L^1_{loc} (\C)$.	 Clearly, $(T_\varphi f)(z) = - \frac{1}{\pi}\mathcal C(\varphi \bar\partial f\area ) (z).$
Consequently, in the sense of distributions,
\[
\ \bar \partial (T_\varphi f)(z) = \varphi (z) \bar \partial f(z). 
\]
Therefore, $T_\varphi f$ is analytic outside of $\text{supp} (\bar \partial f) \cap\text{supp} (\varphi).$ If $\text{supp} (\varphi) \subset \D(a,\delta),$ then  
\[
\begin{aligned}
  \ & (T_\varphi f)(\i ) = 0,~ c_1(T_\varphi f) = - \dfrac{1}{\pi}\int f(z)\bar\partial \varphi (z) d\area(z), \\
  \ & c_2(T_\varphi f,a) =  \dfrac{1}{\pi}\int (z-a)f(z)\bar\partial \varphi (z) d\area(z).
\end{aligned}
\]
The following is the estimate of the norm of $T_\varphi f.$
\[
 \ \| T_\varphi f\|_\i   \le  4\|f\|_\i  \delta\|\bar\partial \varphi\|.
 \]	
 See \cite[VIII.7.1]{gamelin} for the details if $T_\varphi.$

\begin{lemma}\label{CaseAB} 
Let $S$ be a closed square whose sides are parallel to $x$-axis or $y$-axis such that $S\subset \D$ and $S\cap \R \ne \emptyset$ and let $\delta$ and $c_S$ be the length and the center of $S,$ respectively.  
Let $\varphi$ be a smooth function with support in $S,$ $\|\varphi\|_\i \le 1,$ and $\left \|\frac{\partial \varphi (z)}{\partial  \bar z} \right \|_\infty \le \frac{C_6}{\delta}.$ Let $f\in H^\infty (\Omega, E_0)$ with $\|f\|_\Omega \le 1$ and $f(z) = 0, ~ z\in \C \setminus K.$
Then there exists $f_S\in R^{t,\i}(K,\mu)$ that is bounded analytic on $\C_\i\setminus S$ satisfying the following properties hold

(1) $\|f_S\|_{\Omega} \le C_7,$  

(2) $\|f_S\|_{L^\infty(\mu)} \le C_7$ and $f_S(\infty) = 0,$ 

(3) $c_1(f_S) = c_1(T_\varphi f),$ and 

(4) $\rho(f_S) = f_S|_\Omega \in H^\i (\Omega, E_0).$
\end{lemma}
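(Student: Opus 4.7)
The plan is to assemble $f_S$ Vitushkin-fashion as a linear combination of localized blocks whose first Laurent coefficients at $\infty$ sum to $c_1(T_\varphi f)$, with sup-norm control coming from Lemma~\ref{IEstSOB}. Taking $n=1$ in that lemma yields the driving estimate
\[
|c_1(T_\varphi f)| \;=\; \tfrac1\pi \Bigl|\int f(z)\,\bar\partial\varphi(z)\,d\area(z)\Bigr| \;\le\; \tfrac{C_4}{\pi}\,l_{\partial\Omega}(S\cap\mathcal F_1).
\]
Since $S\subset\D$, the outer circle $\T$ in \eqref{NRBDef} never contributes to $\mathcal F_1\cap S$, so I split $\mathcal F_1\cap S$ into its $F(\{g_j\mu\})\subset\R$ part and a collection of arcs $\partial\D(\lambda_n,r_n)\cap S$.

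For the real-line part I extract a compact $E_1\subset F(\{g_j\mu\})\cap S$ whose analytic capacity is comparable, up to an absolute constant, to the one-dimensional Lebesgue measure of $F(\{g_j\mu\})\cap S$; this uses Tolsa's equivalence $\gamma\asymp\gamma_+$ (Theorem~\ref{TolsaTheorem}(1)) together with the $L^2(\R)$-boundedness of the Cauchy integral on $\R$ to produce a positive witnessing measure on $E_1$. Applying Lemma~\ref{BBFunctions} to $E_1$ furnishes $h_0\in R^{t,\infty}(K,\mu)$ bounded analytic off $E_1\subset S$, with $c_1(h_0)=\gamma(E_1)$, $\|h_0\|_{L^\infty(\mu)}\le C_3$, and $\rho(h_0)\in H^\infty(\Omega,E_0)$. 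For each deleted disk with $\overline{\D(\lambda_n,r_n)}\subset S$ I use the rational function $h_n(z)=r_n/(z-\lambda_n)\in\mathrm{Rat}(K)\subset R^{t,\infty}(K,\mu)$, which satisfies $|h_n|\le 1$ off $\overline{\D(\lambda_n,r_n)}$, has $c_1(h_n)=r_n$, and trivially gives $\rho(h_n)\in H^\infty(\Omega,E_0)$. The at most boundedly many disks whose boundary straddles $\partial S$ are handled with an analogous local block obtained from the Ahlfors function of a compact subarc of $\partial\D(\lambda_n,r_n)\cap S$, approximated in $L^t(\mu)$ by rational functions whose poles are pushed just inside $\D(\lambda_n,r_n)\subset K^c$.

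I then set $f_S = \alpha_0 h_0 + \sum_n \alpha_n h_n$ with scalars chosen so that $\alpha_0\gamma(E_1)+\sum_n\alpha_n r_n = c_1(T_\varphi f)$, for instance by distributing the target coefficient in proportion to the capacities of the individual blocks. The driving estimate together with the reverse comparability $\gamma(E_1)+\sum_n r_n \gtrsim l_{\partial\Omega}(S\cap\mathcal F_1)$ (from capacity--length equivalence on $\R$ and on smooth circle arcs) keeps each $|\alpha_\bullet|$ absolutely bounded, so $\|f_S\|_{L^\infty(\mu)}\le C_7$ and $\|f_S\|_\Omega\le C_7$; $f_S(\infty)=0$ and $c_1(f_S)=c_1(T_\varphi f)$ hold by construction, and $\rho(f_S)\in H^\infty(\Omega,E_0)$ is immediate from linearity since $H^\infty(\Omega,E_0)$ is a subalgebra. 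The main obstacle I anticipate is precisely the capacity bookkeeping: establishing the reverse comparability between the aggregate building-block capacities and $l_{\partial\Omega}(S\cap\mathcal F_1)$ via Tolsa's theorems and the string-of-beads geometry, bounding the number of straddling disks using the geometry of $S$, and verifying that the local Ahlfors-type block for each straddling disk admits an $L^t(\mu)$-approximation by $\mathrm{Rat}(K)$-functions with the correct sup-norm and $\rho$-behavior.
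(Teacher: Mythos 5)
Your strategy diverges from the paper's in a way that creates a genuine gap. The paper does \emph{not} assemble $f_S$ as an infinite sum of one building block per deleted disk. Instead, it runs a dichotomy: either $l_{\partial\Omega}(S\cap F(\{g_j\mu\}))\ge\frac14\,l_{\partial\Omega}(S\cap\mathcal F_1)$, in which case a \emph{single} function $f_0$ from Lemma~\ref{BBFunctions} (applied to $E_1=S\cap F(\{g_j\mu\})$) is rescaled to give $f_S=\frac{f_0}{c_1(f_0)}c_1(T_\varphi f)$; or the circle part dominates, in which case one picks a single compact subset $E_3$ of a line segment lying \emph{inside} the deleted disks with length comparable to $l_{\partial\Omega}(S\cap\mathcal F_1)$, takes the Ahlfors function for $E_3$, and rescales. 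In both cases $f_S$ is a scalar multiple of \emph{one} function, and the scalar is $O(1)$ by combining Lemma~\ref{IEstSOB} with $\gamma\asymp l$ on line segments. That design choice is the whole point.

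The concrete flaw in your version: you take $f_S=\alpha_0 h_0+\sum_n\alpha_n h_n$ with $h_n(z)=r_n/(z-\lambda_n)$, one term for every deleted disk $\overline{\D(\lambda_n,r_n)}\subset S$, and you infer sup-norm control from the boundedness of the individual $|\alpha_\bullet|$. That inference is false. Each $|h_n|\le 1$ on $K$ and on $\Omega$, but there are infinitely many disks inside $S$, and $\sum_n|h_n(z)|$ is not uniformly bounded. For example, if $r_n\sim 4^{-n}$ and $\lambda_n\sim\tfrac23\,4^{-n}$ accumulate at a point of $K\cap\R$, then $\sum_n r_n/|\lambda_n|$ diverges, so $\sum_n|h_n|$ blows up near that accumulation point (this is inside $S$ and in the closure of $\Omega$). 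Under the coefficient scheme you describe ($\alpha_n$ equal, distributing $c_1$ in proportion to $\gamma(h_n)=r_n$), $\|\sum_n\alpha_n h_n\|_\Omega$ is therefore not $O(1)$, and neither $\|f_S\|_\Omega\le C_7$ nor $\|f_S\|_{L^\infty(\mu)}\le C_7$ follows. You also leave unaddressed the convergence of the infinite sum in $L^t(\mu)$ and in $L^\infty(\mu)$, which is needed even to assert $f_S\in R^{t,\i}(K,\mu)$. These are the same obstructions that make naive Vitushkin schemes fail without higher-order coefficient matching; the paper sidesteps them entirely by never summing over the disks in a single square.

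A secondary, smaller remark: for the (at most two) straddling disks, the paper again just uses a compact subset of a line segment inside the disk, which is automatically in $K^c$ and whose Ahlfors function is therefore analytic on a neighborhood of $K$ and trivially belongs to $R^{t,\i}(K,\mu)$ with $\rho(f_S)\in H^\i(\Omega,E_0)$ (it is analytic across $\R$). Your proposal to use an Ahlfors function of a circular subarc and then approximate by rational functions with poles pushed inward is workable but unnecessarily delicate. The main thing to fix, though, is the infinite-sum structure: replace it with the paper's single-block dichotomy, so that the $c_1$-matching forces exactly one bounded scalar and all four conclusions of the lemma become immediate.
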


\begin{proof}
Let 
\[
\ D_S = \bigcup_{\overline{\D(\lambda_n, r_n)} \subset S} \D(\lambda_n, r_n),~ T_S = \bigcup_{\overline{\D(\lambda_n, r_n)} \subset S} \partial {\D(\lambda_n, r_n)}.
\]
There are at most two disks $D_l = \D(\lambda_{n_l}, r_{n_l})$ and $D_r = \D(\lambda_{n_r}, r_{n_r})$ with $\lambda_{n_l} < \lambda_{n_r}$ such that $\overline {D_l} \cap (S \setminus D_S) \ne \emptyset$ and $\overline {D_r} \cap (S \setminus D_S)\ne \emptyset.$ 
Then we have
\[
\begin{aligned}
\ &l_{\partial \Omega}(S\cap \mathcal F_1) \le l_{\partial \Omega}(S \cap F(\{g_j\mu\})) \\
\ & + l_{\partial \Omega}(S \cap T_S) + l_{\partial \Omega}(S \cap \partial  D_l) + l_{\partial \Omega}(S \cap \partial  D_r).
\end{aligned}
\]

Case A: Assume that $l_{\partial \Omega}(S \cap F(\{g_j\mu\})) \ge \frac{1}{4}l_{\partial \Omega}(S\cap \mathcal F_1).$ Let $f_0$ be the function in Lemma \ref{BBFunctions} for $E_1 = S \cap F(\{g_j\mu\}).$ Set $f_S = \frac{f_0}{c_1(f_0)}c_1(T_\varphi f).$ (1)-(4) follows from Lemma \ref{BBFunctions} and Lemma \ref{IEstSOB}.

Case B: Assume that $l_{\partial \Omega}(S \cap F(\{g_j\mu\})) < \frac{1}{4}l_{\partial \Omega}(S\cap \mathcal F_1).$ Let $E_2$ be one of $S \cap T_S,$ $S \cap \partial D_l,$ and $S \cap \partial D_r$ with the largest $l_{\partial \Omega}$ measure. Then $l_{\partial \Omega}(E_2) \ge \frac{1}{4}l_{\partial \Omega}(S\cap \mathcal F_1).$ We can find a compact subset of a line segment 
\[
\ E_3\subset S\cap(D_S\cup D_l\cup D_r)
\]
such that $l_{\partial \Omega}(E_3) \ge \frac 12 l_{\partial \Omega}(E_2).$
Let $f_0$ be the admissible function for $E_3,$ that is, $f_0$ is analytic on $\C_\i \setminus E_3,$ $\|f_0\| = 1,$ and $f_0'(\infty) = \gamma(E_3).$ Using \cite[VIII.2.2]{gamelin} and Lemma \ref{IEstSOB}, we conclude that the function $f_S = \frac{f_0}{f_0'(\infty)}c_1(T_\varphi f)$ satisfies (1)-(4).
\end{proof}
 
 \begin{lemma}\label{CaseC} 
 Let $S$ be a closed square whose sides are parallel to $x$-axis or $y$-axis and let $\delta$ and $c_S$ be the length and the center of $S,$ respectively.  
Suppose that (a) $S \setminus \D \ne \emptyset$ or (b) $dist(c_S, \R) \ge \delta$ and $S \cap \partial K \ne \emptyset.$ Let $\varphi$ be a smooth function with support in $S,$ $\|\varphi\|_\i \le 1,$ and $\left \|\frac{\partial \varphi (z)}{\partial  \bar z} \right \|_\infty \le \frac{C_8}{\delta}.$ Let $f\in H^\infty (\Omega, E_0)$ with $\|f\|_\Omega \le 1$ and $f(z) = 0, ~ z\in \C \setminus K.$
Then there exists $f_S\in R^{t,\i}(K,\mu)$ that is bounded analytic on $\C_\i\setminus (\frac 32 S)$ satisfying the following properties hold

(1) $\|f_S\|_{\Omega} \le C_9,$  

(2) $\|f_S\|_{L^\infty(\mu)} \le C_9$ and $f_S(\infty) = 0,$ 

(3) $c_1(f_S) = c_1(T_\varphi f),$

(4) $c_2(f_S,c_S) = c_2(T_\varphi f,c_S),$  and  

(5) $\rho(f_S) = f_S|_\Omega \in H^\i (\Omega, E_0).$
\end{lemma}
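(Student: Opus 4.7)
The plan is to follow the strategy of Lemma \ref{CaseAB}, but now match both Laurent coefficients $c_1$ and $c_2(\cdot, c_S)$. Since a single building block supplies only one degree of freedom, I would write
\[
 \ f_S = \alpha_1 h_1 + \alpha_2 h_2
\]
where $h_1, h_2 \in R^{t,\infty}(K,\mu)$ are bounded analytic on $\C_\infty \setminus (\tfrac{3}{2}S)$ with $h_i(\infty) = 0$ and $\rho(h_i) \in H^\infty(\Omega, E_0)$, and then solve the $2\times 2$ system
\[
 \ \alpha_1 c_1(h_1) + \alpha_2 c_1(h_2) = c_1(T_\varphi f),\qquad \alpha_1 c_2(h_1,c_S) + \alpha_2 c_2(h_2,c_S) = c_2(T_\varphi f, c_S).
\]

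First I would locate two pieces $E_1, E_2 \subset \mathcal F_1 \cap (\tfrac{3}{2}S)$ that are well-separated. In Case (a), if $S \cap K = \emptyset$ then $f \equiv 0$ near $S$ and we take $f_S = 0$; otherwise $S$ must cross $\T$ and $\tfrac{3}{2}S \cap \T$ contains an arc of length $\gtrsim \delta$, which I split into two sub-arcs $E_1, E_2$ of length $\gtrsim \delta$ separated by $\gtrsim \delta$. In Case (b), the hypothesis $\text{dist}(c_S, \R) \ge \delta$ forces every disk $\D(\lambda_n, r_n)$ meeting $\tfrac{3}{2}S$ to have $r_n \gtrsim \delta$, so the corresponding arcs in $\partial\D(\lambda_n, r_n) \cap (\tfrac{3}{2}S)$ furnish the required $E_1, E_2$ (taking them from the same circle if only one disk contributes, or from two different circles otherwise). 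In either case $E_1, E_2$ are subsets of rectifiable curves with $\gamma(E_i) \gtrsim \delta$.

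For each $E_i$ I would produce an admissible measure $\eta_i \in M_0^+(E_i)$ with $\|\eta_i\| \gtrsim \delta$ and $\|\CT(\eta_i)\|_\infty \le 1$ (using Theorem \ref{TolsaTheorem} and the $L^2$-boundedness of the Cauchy transform on circular arcs), and then argue $h_i := \CT(\eta_i)$ (adjusted by $\pm \pi i \,d\eta_i/dl$ on the curve if $\mu$ has a component on that curve, in the spirit of Cases II/III of Lemma \ref{BBFunctions}) lies in $R^{t,\infty}(K,\mu)$ with $\rho(h_i) \in H^\infty(\Omega, E_0)$. The key identity making this work is that for $g_j \in R^t(K,\mu)^\perp$ one has $\CT(g_j \mu)(z) = 0$ for every $z \in K^c$ (since $\tfrac{1}{z - \cdot} \in \text{Rat}(K)$), and Plemelj's formula on the circular boundary $\partial \D(\lambda_n, r_n)$ or $\T$ then forces $\CT(g_j\mu)$ on the curve to equal $\mp \pi i$ times the $l$-density of $g_j\mu$, which is precisely what the Case II/III correction cancels when one computes $\int f_i g_j d\mu$ and $\int \frac{f_i - f_i(\lambda)}{z-\lambda} g_j d\mu$.

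Finally, since $c_n(h_i, c_S) = -\int (w - c_S)^{n-1}\, d\eta_i(w)$ and $\eta_i$ is effectively concentrated near a point $p_i$ with $|p_1 - p_2| \gtrsim \delta$, the system determinant is $\approx \|\eta_1\|\|\eta_2\|(p_2 - p_1)$, bounded below in modulus by $c\delta^3$. Combined with the bounds $|c_1(T_\varphi f)| \lesssim l_{\partial\Omega}(S \cap \mathcal F_1)$ and $|c_2(T_\varphi f, c_S)| \lesssim \delta\, l_{\partial\Omega}(S \cap \mathcal F_1)$ supplied by Lemma \ref{IEstSOB}, Cramer's rule gives $|\alpha_i| \lesssim l_{\partial\Omega}(S \cap \mathcal F_1)/\delta \lesssim 1$ (using the perimeter bound $l_{\partial\Omega}(S \cap \mathcal F_1) \lesssim \delta$), so $\|f_S\|_\infty \le C_9$. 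Properties (1)--(5) then follow by construction, with (3) and (4) built into the solution of the linear system, and (5) inherited from the $h_i$. The main obstacle will be the admissibility verification in the third paragraph: carrying over the argument of Lemma \ref{BBFunctions} from the Lipschitz graph $\R$ to curved pieces of $\partial K$ (arcs of $\T$ and of $\partial\D(\lambda_n, r_n)$) and controlling the Plemelj correction terms so that the modified $h_i$ genuinely land in $R^{t,\infty}(K,\mu)$ rather than merely in $L^\infty(\mu)$.
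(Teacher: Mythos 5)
The paper's proof takes a genuinely different and substantially simpler route, and your proposal as written has a real gap at the step you yourself flag as "the main obstacle."

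The paper does \emph{not} place its building blocks on pieces of $\mathcal F_1 \subset \partial K$. Instead it exploits the fact that under hypothesis (a) or (b) one can find a \emph{line segment} $L$ of length $\gtrsim \delta$ lying entirely inside $K^c$: in case (a), $L \subset \tfrac{3}{2}S \setminus \overline{\D}$; in case (b), since $S \subset \D$, $\mathrm{dist}(c_S,\R)\ge\delta$ and $S\cap\partial K\ne\emptyset$ force $S$ to meet some $\partial\D(\lambda_{n_0},r_{n_0})$ with $r_{n_0}\ge\delta/2$, and $L \subset \tfrac{3}{2}S \cap \D(\lambda_{n_0},r_{n_0})$. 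A bounded analytic function $f_1$ on $\C_\infty\setminus L$ is then analytic on a neighborhood of $K$, so $f_1\in R(K)\subset R^{t,\infty}(K,\mu)$ and $f_1|_\Omega\in H^\infty(\Omega,E_0)$ \emph{automatically}, with nothing to check. The paper also does not solve a $2\times 2$ system; it uses the standard Vitushkin device of taking $f_2 = $ normalized admissible function for $L$, then $f_2^2$ (which has $c_1(f_2^2)=0$ and $c_2(f_2^2,c_S)=c_1(f_2)^2$) to adjust the second coefficient, and $f_3 = f_2 - \tfrac{256\,c_2(f_2,c_S)}{\delta^2}f_2^2$ to kill $c_2$ while keeping $c_1$; then $f_S$ is a linear combination of $f_3$ and $f_2^2$. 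The norm bound follows from Lemma \ref{IEstSOB} (and, more simply, from the universal bound $|c_n(T_\varphi f, c_S)|\lesssim\delta^n$, which avoids the perimeter bound $l_{\partial\Omega}(S\cap\mathcal F_1)\lesssim\delta$ you invoke; that perimeter bound is not obviously valid when $\tfrac{3}{2}S$ meets many circles $\partial\D(\lambda_n,r_n)$, though it is unnecessary in any case).

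The gap in your proposal is precisely the admissibility verification in your third paragraph. You want to put measures $\eta_i$ on arcs $E_i\subset \T$ or $\partial\D(\lambda_n,r_n)$ and show the Plemelj-corrected $h_i = \CT(\eta_i)\mp\pi i\,d\eta_i/dl$ lie in $R^{t,\infty}(K,\mu)$. But the mechanism in Lemma \ref{BBFunctions}, Cases II/III, rests on the \emph{defining property} of $F_\pm(\{g_j\mu\})$, namely $v^\pm(g_j\mu)=0$ on $E_2$, which is a property of the specially chosen exceptional set $F_\pm\subset\mathcal N(h)$, not of circular arcs. For arcs of $\T$ or $\partial\D(\lambda_n,r_n)$ the relevant identity must instead come from $\CT(g_j\mu)\equiv 0$ off $K$ combined with a Plemelj argument on those curves, and would additionally have to be reconciled with any singular part of $\mu$ sitting on those circles; this is a new argument that does not simply "carry over." Since the paper's choice of $L\subset K^c$ makes all of this moot, there is no reason to fight these difficulties — you should take the same shortcut.

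Finally, a small imprecision in your case (a): $S\setminus\D\ne\emptyset$ together with $S\cap K\ne\emptyset$ does not force $S$ to \emph{cross} $\T$ (it could merely touch it), and more importantly the useful geometric conclusion is not that $\tfrac{3}{2}S$ contains an arc of $\T$ but that $\tfrac{3}{2}S\setminus\overline\D$ contains a line segment of length comparable to $\delta$, which is what gives an admissible function analytic near $K$.
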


\begin{proof}
If (a) holds, then there exists a line segment $L$ with length greater than $\frac 14 \delta$ such that $L \subset \frac 32 S \setminus \overline \D.$ If $S \subset \D$ and (b) holds, then there exists $\D(\lambda_{n_0},	r_{n_0})$ such that $S \cap \partial \D(\lambda_{n_0},	r_{n_0}) \ne \emptyset.$ Hence, $r_{n_0} \ge \frac 12 \delta$ and there exists a line segment $L$ with length greater than $\frac 14 \delta$ such that $L \subset \frac 32 S \cap\D(\lambda_{n_0},	r_{n_0}).$

There exists a function $f_1\in C(\C_\i)$ such that $\|f_1\| \le 1,$
$f_1$ is analytic on $\C_\i\setminus L,$ $f_1(\infty) = 0,$ and $c_1(f_1) = \frac {\delta}{16}.$ Set $f_2 = \frac {\delta f_1}{16c_1(f_1)}.$ Then $\|f_2\| \le 1,$  $c_1(f_2^2) = 0,$ and $c_2 (f_2^2,c_S) = \frac{\delta^2}{256}.$ By \cite[Theorem 2.5 on page 201]{gamelin}, we get $|c_2 (f_2,c_S)|  \le 3 \delta^2.$ Let
 \[
 \ f_3 = f_2 - \dfrac{256c_2 (f_2,c_S)}{\delta^2} f_2^2.
 \]
 Then $\|f_3\| \le C_{10},$ $c_1(f_3) = \frac{\delta}{16},$ and $c_2 (f_3,c_S) = 0,$ Set  
 \[
 \ f_S =  \dfrac{16c_1(T_\varphi f)}{\delta} f_3 + \dfrac{256c_2(T_\varphi f,c_S)}{\delta^2} f_2^2.
 \]
 The proof now follows from  Lemma \ref{IEstSOB}.
\end{proof}

Let $\delta > 0$. We say that 
$\{\varphi_{ij},S_{ij}, \delta\}$ is a smooth partition of unity subordinated to $\{2S_{ij}\},$ 
 if the following assumptions hold:

(i)  $S_{ij}$ is a square with vertices   $(i\delta,j\delta),~((i+1)\delta,j\delta),~(i\delta,(j+1)\delta),$ and $((i+1)\delta,(j+1)\delta);$

(ii) $\varphi_{ij}$ is a $C^\infty$ smooth function supported in $2S_{ij},$
and with values in $[0,1]$;

(iii) 
 \[
 \ \|\bar\partial \varphi_{ij} \| \le \frac{C_{1}}{\delta},~ \sum \varphi_{ij} = 1.
 \]
Given a square grid $\{ S_{ij} \}$, we shall let $c_{ij}$ denote the center of $S_{ij}$ (See \cite[VIII.7]{gamelin} for details).

\begin{proof} (Main Theorem (2))
Let $\{\varphi_{ij},S_{ij}, \delta\}$ be a smooth partition of unity. 
Let $f\in H^\infty(\Omega, E_0),$ $\|f\|\le 1,$ and set $f(z) = 0$ for $z\in \mathbb C \setminus \Omega.$ Then 
\begin{eqnarray}\label{ImRhoOntoEq1}
 \ f = \sum_{2S_{ij}\cap \partial \Omega \ne \emptyset} (f_{ij}:= T_{\varphi_{ij}}f). 
 \end{eqnarray}
 Let $J$ be the set of $(i,j)$ satisfying $i\in \{-2,-1,0,1\}$ and $2S_{ij}\subset \D.$ 
 We apply Lemma \ref{CaseAB} when $(i,j) \in J$ and Lemma \ref{CaseC} when $(i,j) \notin J$ for $S= 2S_{ij}$ and $\varphi = \varphi_{ij}$ to get $f_{S_{ij}}.$ 
We rewrite \eqref{ImRhoOntoEq1} as the following
 \[
 \ f = \sum_{(i,j) \in J,~ 2S_{ij}\cap \partial \Omega \ne \emptyset} (f_{ij} - f_{S_{ij}}) + \sum_{(i,j) \notin J,~ 2S_{ij}\cap \partial \Omega \ne \emptyset} (f_{ij} - f_{S_{ij}}) + f_{\delta}
 \]
where
 \begin{eqnarray}\label{FDeltaSOB}
 \ f_{\delta} = \sum_{2S_{ij}\cap \partial \Omega \ne \emptyset}f_{S_{ij}} \in R^{t,\i}(K,\mu).
 \end{eqnarray}
 Clearly, $\rho(f_{\delta}) = f_{\delta}|_\Omega \in H^\i (\Omega, E_0).$ Using Vitushkin approximation scheme (see \cite[VIII.7]{gamelin}), we get
 \[
 \begin{aligned}
 \ & |f(z) - f_\delta(z)| \\
 \ \le & \sum_{(i,j) \in J,~ 2S_{ij}\cap \partial \Omega \ne \emptyset} |f_{ij}(z) - f_{S_{ij}}(z)| + \sum_{(i,j) \notin J,~ 2S_{ij}\cap \partial \Omega \ne \emptyset} |f_{ij}(z) - f_{S_{ij}}(z)| \\
 \ \le & C_{11} \sum_{\underset{-2\le i \le 1}{2S_{ij}\cap \partial \Omega \ne \emptyset}} \min \left (1, \dfrac{\delta ^2}{dist(z, 3S_{ij})^2} \right ) + C_9 \sum_{2S_{ij}\cap \partial \Omega \ne \emptyset} \min \left (1, \dfrac{\delta ^3}{dist(z, 3S_{ij})^3} \right ) \\
 \ \le & C_{12} \min \left (1, \dfrac{\delta}{dist(z, \partial \Omega)} \right ).
 \end{aligned}
 \]  
Therefore, $f_{\delta} (z) \rightarrow f(z)$ uniformly on compact subsets of $\mathbb C \setminus \partial \Omega$ and $\|f_{\delta}\|_{L^\infty(\mu)} \le C_{13}.$ There exists a subsequence $\{f_{\delta_n}\}$ such that $f_{\delta_n} (z) \rightarrow \tilde f(z)$ in $L^\infty(\mu)$ weak-star topology. Hence, $\tilde f\in R^{t,\i}(K,\mu)$ and $\mathcal C(f_{\delta_n} g\mu)(z) \rightarrow \mathcal C(\tilde fg\mu)(z), ~\area-a.a..$ Thus, $f(z)\mathcal C(g\mu)(z) = \mathcal C(\tilde fg\mu)(z), ~\area-a.a.$ for $g\perp R^t(K,\mu),$ which implies $\rho(\tilde f) = f.$ 
\end{proof}

We are now ready to prove Main Theorem (3).

\begin{proof} (Main Theorem (3))
From Main Theorem (1), we see that $\rho(f)\in H^\infty (\Omega, E_0)$ for $f\in R^{t,\i}(K,\mu).$	 Clearly, 
\[
\ \rho(f_1f_2) = \rho(f_1)\rho(f_2),~f_1,f_2\in R^{t,\i}(K,\mu). 
\]
Therefore, for $f\in  R^{t,\i}(K,\mu),$ since $\lambda\in \Omega$ is an analytic bounded point evaluation, we see that there exists a constant $C_\lambda > 0$ such that
\[
\ |\rho(f)(\lambda)| \le C_\lambda^{\frac 1n} \|f^n\|_{L^t(\mu)}^{\frac 1n}. 
\]
Taking $n\rightarrow \infty,$ we get $\|\rho(f)\|_\Omega\le \|f\|_{L^\infty(\mu)}.$ 

From the last paragraph of the proof of Main Theorem (2), we get $\|f^n\|_{L^\infty(\mu)} ^{\frac 1n}\le C_{13} ^{\frac 1n} \|\rho(f^n)\|_\Omega^{\frac 1n}.$ 
Again letting 
$n\rightarrow \infty,$ we get $\|f\|_{L^\infty(\mu)} \le \|\rho(f)\|_\Omega.$ Thus,
\[
\ \|\rho(f)\|_\Omega = \|f\|_{L^\infty(\mu)},~ f\in \in R^{t,\i}(K,\mu).
\]
On the other hand, $\rho$ is surjective by Main Theorem (2). Therefore, $\rho$ is a bijective isomorphism between two Banach algebras $R^{t,\i}(K, \mu)$ and $H^\infty (\Omega,E_0)$. Clearly $\rho$ is also weak-star sequentially continuous, so an application of the Krein-Smulian Theorem shows that $\rho$ is a weak-star homeomorphism.
\end{proof}
\bigskip

{\bf Acknowledgments.} 
The authors would like to thank Professor John M\raise.45ex\hbox{c}Carthy for carefully reading through the manuscript and providing many useful comments.
They would also like to thank Professor John Akeroyd for providing useful discussions.
\bigskip

\bibliographystyle{amsplain}

\end{document}